\documentclass[a4paper,12pt]{amsart}
 \usepackage[T1]{fontenc}
 \usepackage[utf8]{inputenc}
\usepackage[english]{babel}
\usepackage{etoolbox}
\usepackage{booktabs}
\usepackage{subfigure}
\usepackage{amsmath, amssymb, amsthm, amsfonts}
\usepackage{mathrsfs}
\usepackage{tikz}
\usepackage{graphicx}    
\usepackage{caption}     
\usepackage{subcaption}

\usetikzlibrary{arrows}
\usetikzlibrary{matrix}
\usepackage[left=1.8cm, right=1.8cm, top=2cm]{geometry}
\usepackage{enumerate}
\usepackage{hyperref}
\usepackage{algorithm}
\usepackage{algorithmic}
\date{}
\setlength{\parindent}{0pt}
\usepackage[font=small,labelfont=bf]{caption}
\theoremstyle{plain}

\newtheorem{theorem}{Theorem}[section] 
\newtheorem{lemma}{Lemma}[section]
\newtheorem{Rule}{Rule}
\newtheorem{cor}{Corollary}[section]

\nonstopmode\numberwithin{equation}{section}
\theoremstyle{definition}
 
\newtheorem{rema}{Remark}[section]
\newtheorem{assum}{Assumption}[section]

\newtheorem*{ques*}{Question}

\makeatletter
\def\tagform@#1{\maketag@@@{\ignorespaces#1\unskip\@@italiccorr}}
\let\orgtheequation\theequation
\def\theequation{(\orgtheequation)}
\makeatother
\let\orgautoref\autoref

\renewcommand{\autoref}[1]{\def\equationautorefname{}\orgautoref{#1}}

\setlength{\parindent}{0pt}

\usepackage{fancyhdr}

\newcommand\shorttitle{Stochastic Bouligand Landweber}

\fancyhf{}

\fancyhead[C]{%
\ifodd\value{page}
  \small\scshape\shorttitle
\else
  \small\scshape\shorttitle
\fi
}
\fancyhead[R]{\thepage}
\pagestyle{fancy}

\begin{document}

\title[Hanke-Raus heuristic rule for IRSGD]{Hanke-Raus heuristic rule for iteratively regularized stochastic gradient descent}
\author{  Harshit Bajpai$^{\dagger}$, Gaurav Mittal$^{\ddagger}$, Ankik Kumar Giri$^{\dagger}$}
 \email{bajpaiharshit87@gmail.com, gaurav.mittaltwins@yahoo.com, ankik.giri@ma.iitr.ac.in}
 \address{$^\dagger$Department of Mathematics, Indian Institute of Technology Roorkee, Roorkee, Uttarakhand, 247667, India}
 \address{$^\ddagger$Defence Research and Development Organization, Near Metcalfe House, Delhi,  110054, India}
\maketitle
\begin{quote}
{{\em \bf Abstract.}} Over the past decade, stochastic algorithms have emerged as scalable and efficient tools for solving large-scale ill-posed inverse problems by randomly selecting subsets of equations at each iteration. However, due to the ill-posedness and measurement noise, these methods often suffer from oscillations and semi-convergence behavior, posing challenges in achieving stable and accurate reconstructions. This study proposes a novel variant of the stochastic gradient descent (SGD) approach, the iteratively regularized stochastic gradient descent (IRSGD) to address nonlinear ill-posed problems in Hilbert spaces. Under standard assumptions, we demonstrate that the mean square iteration error of the method tends to zero for exact data.  In the presence of noisy data, we first propose a heuristic parameter choice rule (HPCR) and then apply the IRSGD method in combination with HPCR. Precisely, HPCR selects the regularization parameter without requiring any a-priori information of the noise level. We show that the method terminates in finitely many steps in case of noisy data and has regularizing features.
Finally, some numerical experiments are performed to demonstrate the practical efficacy of the method.
\end{quote}

\vspace{.3cm}

\noindent

{ \bf Keywords:} Unknown noise level; Stochastic gradient descent;  Heuristic stopping rules; Nonlinear ill-posed problems; Inverse problems; Schlieren tomography.

{\bf MSC codes:} 65J22, 65J20,  47J06

 \section{Introduction}\noindent
This work focuses on deriving an approximate solution for a system of nonlinear ill-posed equations
\begin{equation}\label{ststart}
    F_{i}(u) = y^{\dagger}_i,
\end{equation}
where $i = 0, 1,\ldots, P-1,$ and for each $i$, the mapping  $F_i : \mathfrak{D}(F_i) \subset U \rightarrow Y_i$  represents a nonlinear operator between the Hilbert spaces $U$ and $Y_i$ with domain $\mathfrak{D}(F_i)$. In this context, $y_{i}^{\dagger} \in Y_i$ represents the exact data, $P\in \mathbb{N}$,  $U$ and $Y_i$ are endowed with their standard inner products and norms, respectively. Systems of the form \ref{ststart} commonly appear in various real-life applications. For instance, they are fundamental in various tomography methods used in fields such as medical imaging and industrial diagnostics \cite{hanafy1991quantitative,jin2023convergence,natterer2001mathematics}.
Conventionally, for the space $Y^P=Y_0\times Y_1 \times\cdots \times Y_{P-1}$  equipped with the natural inner product derived from those of $Y_i$,  \ref{ststart} can be interpreted as 
\begin{equation}\label{stcombined}
  F(u) = y^{\dagger} , 
\end{equation}
where the mapping $F : U \rightarrow Y^P$ is given by
$$ F(u) = \begin{pmatrix} 
           F_0(u) \\
           \vdots \\
           F_{P-1}(u)
          \end{pmatrix},\ \  \text{and}\ \hspace{2mm} 
         y^{\dagger}= \begin{pmatrix} 
           y^{\dagger}_0 \\
           \vdots \\
           y^{\dagger}_{P-1}
          \end{pmatrix}. $$
The exact data $y^\dagger$ is typically never available. Rather, a noisy data  $y^{\delta} \in Y^{P}$ with the noise level $\delta \geq 0$ is accessible such that 
\begin{equation}\label{y and y delta}
    \|y^{\dagger} - y^{\delta} \| \leq \delta.
\end{equation}
Here, we write $\delta = \sqrt{\delta_{0}^2 + \delta_{1}^2 + \cdots + \delta_{P-1}^2}$ and $y^{\delta}= (y^{\delta}_0, y^{\delta}_1,...,
y^{\delta}_{P-1})$, where
\begin{equation}
    \|y_{i}^{\delta} - y_i^{\dagger}\| \leq \delta_i\ \ \text{for}\ \ i = 0, 1, \ldots,P-1.
\end{equation}
 The solution to \ref{ststart} might not exist because of its ill-posedness, and even if it does, it is unlikely to be unique. Moreover, the solution(s) can be highly sensitive to perturbations in  
 $y^{\delta}$ (see \cite{engl1996regularization}). Therefore, regularization techniques are required to find the stable approximate solution of \ref{ststart}. \newline
 Iterative regularization is a well-established and highly effective class of regularization techniques. This approach has achieved significant success in solving a wide variety of inverse problems, (see \cite{engl1996regularization, kaltenbacher2008iterative}). Among these methods, the \emph{Landweber Iteration Method} (LIM) is a widely recognized classical iterative technique. For the mapping $F: U \rightarrow Y^{P}$ that is Fr\'echet differentiable, LIM is
 \begin{equation}\label{LB}
     u_{k+1}^{\delta} = u_{k}^{\delta} - F'(u_k^{\delta})^*(F(u_k^{\delta}) - y^{\delta}), \hspace{10mm} k \geq 0,
 \end{equation}
where $u_0^{\delta}:=u_{0 }$ is an initial guess and $F'(u_k^{\delta})^*$ represents the adjoint of the Fr\'echet derivative of $F(u_k^{\delta})$.
 
 The method \ref{LB} was modified by Scherzer \cite{scherzer1998modified}, who included a damping term and called it the \emph{iteratively regularized Landweber iteration method}. For  $\lambda_k\geq 0$, the iteration takes the form
      \begin{equation}\label{damping}
     u_{k+1}^{\delta} = u_{k}^{\delta} - F'(u_k^{\delta})^*(F(u_k^{\delta}) - y^{\delta}) - \lambda_{k}(u_k^{\delta} - u^{(0)}),  
 \end{equation}
where $k \geq 0$. The \(k\)-th iteration of \eqref{damping} can be viewed as performing a gradient descent step on the functional
\begin{equation*}\label{Landweber functional}
    J(u) = \frac{1}{2} \left( \|F(u) - y^\delta\|^2 + \lambda_k \|u - u^{(0)}\|^2 \right ) = \frac{1}{2} \left( \sum_{i=0}^{P-1}\|F_{i}(u) - y_{i}^\delta\|^2 + \lambda_k \|u - u^{(0)}\|^2 \right ).
\end{equation*}
Compared to the method \ref{LB}, the analysis of deriving convergence rates for the method \ref{damping} requires less assumptions on $F$. Moreover, the method \ref{damping} converges to a solution that remains close to $u^{(0)}$, whereas achieving a similar convergence behavior with the method \ref{LB} needs additional assumptions (see \cite{kaltenbacher2008iterative}). This highlights the advantage of incorporating the extra damping term in \ref{damping} as compared to \ref{LB}. 
 When the noisy data is available, it is necessary to use an appropriate stopping rule (a-posteriori) to illustrate that the iterative schemes \ref{LB} and \ref{damping} are regularization methods. The prevalent stopping rule is referred to as \textit{discrepancy principle}~\cite{Morozov1966}, which states that the method is stopped after $k_\delta = k_\delta (\delta, y^\delta)$ steps, where
\begin{equation}\label{discrepancy}
     \|F(u_{k_\delta}^\delta) - y^{\delta}\| \leq \tau \delta <  \|F(u_{k}^{\delta}) - y^{\delta}\|, \hspace{10mm} 0 \leq k < k_\delta, 
\end{equation}
 for some $\tau >1.$
  However, it is challenging to apply the discrepancy principle \ref{discrepancy} in practicality since the noise level is not always reliable or available. Consequently, while applying the discrepancy principle, overestimation or underestimation of the noise level could lead to serious deterioration of the reconstruction accuracy. Therefore, it is reasonable to work on heuristic stopping rules that do not depend on the information of noise level. Real \cite{real2024hanke} proposed a heuristic rule for the Landweber iteration \ref{LB}  based on the discrepancy principle, which is further motivated by the method of Hanke and Raus \cite{hanke1996generalheuristic}. For \ref{LB}, heuristic rule determines an integer $k_* := k_{*}(y^{\delta})$ by minimizing
\begin{equation}\label{heuristic previous}
    \Psi (k, y^\delta) := (k+a)\|F({u_k^\delta}) - y^\delta\|^{p}
\end{equation}
over $k$, where  $p>0$ is a constant and $a \geq 1$ is a fixed number.

It may be observed that the significant computational cost per iteration is a common problem with traditional iterative regularization techniques mainly due to the need to process complete dataset. Applying stochastic gradient descent (SGD) \cite{jin2018regularizing, jin2020convergence, jin2021saturation, robbins1951stochastic} is a viable and successful way to deal with this problem. Using \cite{jin2020convergence}, the SGD method can be expressed as
\begin{equation}\label{SGD}
    u_{k+1}^{\delta} = u_{k}^{\delta} - \mu_{k}F_{i_k}'(u_k^{\delta})^*(F_{i_k}(u_k^{\delta}) - y_{i_k}^{\delta}), \hspace{10mm} k \geq 0,
 \end{equation}
where $i_{k}$ is a uniformly drawn random index from $\{0,1, \ldots, P-1\}$ and  $\mu_{k} \geq 0$ is the step size. For the SGD method \ref{SGD}, the stopping index $k(\delta)\in \mathbb{N}$ satisfies the rule
\begin{equation}\label{aprioriforSGD} \lim_{\delta \rightarrow 0^{+}} k(\delta) = \infty \hspace{5mm} \text{and} \hspace{5mm} \lim_{\delta \rightarrow 0^{+}}\delta^2 \sum_{i=1}^{k(\delta)} \mu_{i} = 0.
\end{equation}
Notably, the aforementioned rule is an \textit{a-priori} rule.
We refer to \cite{jin2020convergence,lu2022stochastic} for some recent work on the SGD method using \textit{a-priori} stopping rule and \cite{jahn2020discrepancy, zhang2023discrepancy} for the SGD method using \textit{a-posteriori} stopping rule. To the best of our knowledge, convergence results of SGD method via heuristic stopping rules are not yet available in the literature. Accordingly, the primary objective of this study is to explore the convergence analysis of the following iteratively regularized stochastic gradient descent (IRSGD) under heuristic stopping rules:
 \begin{equation}\label{SIRLI 2}
      u_{k+1}^{\delta} = u_{k}^{\delta} - \omega_{k}F_{i_k}'(u_{k}^{\delta})^*(F_{i_k}(u_k^{\delta}) - y_{i_k}^{\delta}) - \lambda_{k}(u_k^{\delta} - u_0), \hspace{5mm} k \geq 0,
 \end{equation}
\[\hspace{30mm} \text{with}\quad 0 \leq \lambda_k \leq \lambda_{\text{max}} <\frac{1}{2},\]
 where  $\lambda_{k}$ is the weighted parameter, $\omega_{k}$ is the step size. This method differs from the SGD method in the sense that an extra damping term $-\lambda_{k}(u_k^{\delta} - u_0)$ is 
 considered in its formulation. 
A key advantage of studying IRSGD lies in its ability to reflect SGD by setting the damping factor $\lambda_k = 0$. Our approach is novel in the realm of stochastic methods for solving ill-posed problems as it provides convergence analysis without relying on an \textit{a-priori} stopping rule. Additionally, it circumvents the need to know the noise level, which further enhances its practical applicability.

We perform the detailed convergence analysis of the proposed method with early stopping, which is guaranteed to terminate after a finite number of iterations. Unlike the a priori stopping rules, which require a deterministic stopping index, our method yields a stopping index that is inherently random due to its dependence on the sample paths. This randomness introduces analytical challenges that cannot be addressed using the techniques in \cite{jin2020convergence,jin2023convergence,lu2022stochastic}. To overcome these, we develop a convergence framework grounded in Hilbert space theory and probability.

Our convergence framework, which addresses the random stopping index defined by a heuristic rule, is inspired by the recent work \cite{Huang2025early}, where random stopping based on the discrepancy principle is studied in the context of stochastic mirror descent. By constructing a probability-one event, pathwise convergence is established for the exact data case, and, together with a stability result, this leads to almost sure convergence in the noisy case as the noise level approaches zero.
\vspace{4mm}

The main outcomes of this study are listed as follows:
     
   
    
\begin{itemize}
\item The proposed method is fully data-driven, requiring neither \textit{a-priori} nor \textit{a-posteriori} stopping rules. To the best of our knowledge, this is the first work to formulate and analyze a stochastic method with a heuristic-type stopping criterion.  

\item Unlike prior works \cite{jahn2020discrepancy, jin2018regularizing, jin2020convergence, jin2021saturation} that employ a polynomially decaying step size, we allow the step size $\omega_k$ to vary arbitrarily within a fixed interval $[\omega, \Omega]$. We establish convergence of the proposed IRSGD scheme \eqref{SIRLI 2} in Hilbert spaces under a heuristic stopping rule. The analysis is technically challenging due to the randomness of the stopping index and the inclusion of a damping term. 

\item We extend the scope from analyzing a single equation to developing and studying stochastic methods for solving large-scale systems of equations (i.e., with large $P$ in \eqref{ststart}). By randomly selecting and processing individual equations from \eqref{ststart} independently, our approach drastically reduces memory requirements, making it particularly relevant to large-scale inverse problems arising in practical applications \cite{hanafy1991quantitative, natterer2001mathematics, olafsson2006radon}.

\item Through extensive numerical experiments, we demonstrate that incorporating a damping factor into the SGD framework along with the heuristic stopping significantly  improves both reconstruction quality and convergence speed compared to standard SGD variants, confirming the practical benefits of the proposed approach.  
\end{itemize}

\textbf{Outline.}
Section~\ref{sec-3} introduces the required assumptions, presents the IRSGD algorithm, and develops the analysis for the heuristic stopping rule, including a result ensuring the iterates remain within a certain ball. Section~\ref{sec-convo} provides convergence results for both noisy and exact data. Section~\ref{Section:NE} reports numerical experiments illustrating the effectiveness of the method, and the final section concludes the paper.

\section{ Iteratively regularized Stochastic gradient descent  in Hilbert spaces}\label{sec-3}
To investigate the convergence behavior of the iterates \( u_k^\delta \) in the IRSGD method, we consider the mean squared norm \( \mathbb{E}[\|\cdot\|^2] \). In this context,  the expectation \( \mathbb{E}[\cdot] \) is taken with respect to the filtration \( \mathcal{F}_k \), which is determined by the sequence of random indices \( \{i_1, \dots, i_{k - 1}\} \) (see \cite{jin2023convergence}). 
We denote by
 $B_{\rho}(u)$ the closed ball in $U$ of radius $\rho$ centered at $u$, where $u \in U$ and $\rho > 0$. Furthermore, let $\mathcal{D}_{\rho}(u^{\dagger})$ represent the set of all solutions to \ref{stcombined} within the closed ball $B_\rho (u^{\dagger})$, which is formally given by 
 $$ \mathcal{D}_{\rho}(u^{\dagger}):= \{u \in B_{\rho}(u^{\dagger}): F(u) = y^{\dagger}\}.$$

The remainder of this section is structured as follows: Subsection~\ref{subsec-3.1} introduces the foundational assumptions required for the convergence analysis and provides a detailed exposition of the IRSGD method, including a heuristic stopping criterion as presented in Algorithm~\ref{alg:IRSGD Algorithm}. Subsection~\ref{subsec-3.2} discusses a heuristic stopping rule based on a modified discrepancy principle. In this context, we establish the almost sure finiteness of the stopping index defined by this principle and present several key theoretical results related to the proposed scheme. 

 \subsection{Assumptions}\label{subsec-3.1}
Our analysis demands the following assumptions.
\begin{assum}\label{common assumptions}
  The following is true for $i \in \{0,1, \ldots, P-1\}$, and $\rho>0$:
    \begin{itemize}
        \item [(i)] $F: U \rightarrow Y^P$ is a continuous operator.
        \item [(ii)] There exists a constant $0 < L \leq \frac{1}{2\omega}$ such that  
        \begin{equation} \label{bddness of F_i'}
            \|F_{i}'(u)\| \leq L \ \text{for all}\ u \in B_{\rho+c(\rho)}(u_0).
        \end{equation}  (see Lemma \ref{first lemma} for the meaning of $c(\rho)$ and $\omega$.) Moreover,  constant $\eta$ exists  such that 
        \begin{equation}\label{tangential}
        \|F_{i}(\overline{u}) - F_{i}(u) - F_{i}'(u)(\overline{u} - u) \| \leq \eta \|F_{i}(\overline{u}) - F_{i}(u)\|, \ \ 0<\eta<1
        \end{equation}
        for all $\overline{u}, u \in B_{\rho+c(\rho)}(u_0).$ 
        \item [(iii)] \ref{stcombined} has a solution $\hat{u} \in B_{\rho}(u_0).$
    \end{itemize}
    \end{assum}
The conditions in Assumption~\ref{common assumptions}, including the tangential cone condition~\eqref{tangential} that quantifies the nonlinearity of the forward operator 
\(F_i\), are standard in convergence analyses of iterative regularization for nonlinear inverse problems~\cite{hanke1995convergence,jin2020convergence,kaltenbacher2008iterative} and have been verified for various settings such as nonlinear integral equations and parameter identification in PDEs~\cite{hanke1995convergence}.

Motivated by \cite{hanke1996generalheuristic}, we consider the following heuristic stopping rule for the IRSGD method.

\begin{Rule}[Heuristic]\label{Hanke Rule}
  
Let $a \geq 1$ be a fixed constant such that 
\[
\Psi(k, y^{\delta}) := (k + a) \sum_{i=0}^{P-1} \|F_i(u^\delta_k) - y_i^\delta\|^2.
\] We define $k_* := k_*(y^\delta)$ as the integer satisfying
\[
k_* \in \arg\min \{ \Psi(k, y^\delta) : 0 \leq k \leq  k_\infty\},
\]
where $k_\infty := k_\infty(y^\delta)$ denotes the largest integer for which $u^\delta_k \in \mathfrak{D}(F)$ holds for all $0 \leq k  <  k_\infty$.
\end{Rule}
Using the integer $k_* := k_*(y^\delta)$ defined by Rule \ref{Hanke Rule}, we call $u^\delta_{k_*(y^\delta)}$ as an approximate solution. This gives rise to the main algorithm of the paper, presented as follows.
\begin{algorithm}
\caption{IRSGD with heuristic stopping}
\label{alg:IRSGD Algorithm}
\begin{algorithmic}[1]
\STATE{\textbf{Given:} $F_i, y_i^\delta$ for all $i \in \{0, 1, \ldots, P-1\}$ and a constant $a\geq 1$}
\STATE{\textbf{Initialize:} $u_0^\delta = u_0$, set $k = 0$}
\REPEAT
    \STATE{Choose $\omega_k \in [\omega, \Omega]$ and the weight parameter $\lambda_k$ such that
    \[
    0 \leq \lambda_k \leq \lambda_{\text{max}} < \frac{1}{2}.
    \]}
    \STATE{Randomly pick an index $i_k$ from $\{0, 1, \ldots, P-1\}$ uniformly.}
    \STATE{Update
    \[
    u_{k+1}^\delta = u_k^\delta - \omega_k F'_{i_k}(u_k^\delta)^* (F_{i_k}(u_k^\delta) - y_{i_k}^\delta) - \lambda_k (u_k^\delta - u_0),
    \]
    where $F'_{i_k}$ is the Fréchet derivative of $F_{i_k}$, and $u_0$ is the initial guess.}
    \STATE{Set $k \gets k + 1$}
\UNTIL{\[
\Psi(k, y^{\delta}) := (k + a) \sum_{i=0}^{P-1} \|F_i(u^\delta_k) - y_i^\delta\|^2
\] does not achieve its minimum (for the details see heuristic rule \ref{Hanke Rule})}
\RETURN{$u^\delta_{k_*(y^\delta)}$}
\end{algorithmic}
\end{algorithm}

\begin{rema}
Rule \ref{Hanke Rule} is straightforward to implement in practice, as detailed in Section \ref{Section:NE}. Throughout the iterative procedure, the value of $\Psi(k, y^\delta)$ is monitored as a function of $k$. After a sufficient number of iterations, the process is terminated and $k_*$ is selected that minimizes $\Psi(k, y^\delta)$. However, an important challenge associated with the nonlinear Landweber iteration \cite{HumberS} lies in determining an appropriate upper bound on the number of iterations. This is particularly critical due to its local convergence properties, which can lead to premature stopping or inaccurate identification of $k_*$. Therefore, the limit on the upper bound must be sufficiently large to avoid inaccurately estimating $k_*$ at the first local minimum, which is often not the global minimum \cite{HumberS}. 
\end{rema}

\begin{rema}
Evaluating the residual 
\(\|F(u_k^\delta) - y^\delta\|^2 = \sum_{i=0}^{P-1} \|F_i(u_k^\delta) - y_i^\delta\|^2 \) 
at every iteration of Algorithm~\ref{alg:IRSGD Algorithm} for heuristic rule~\ref{Hanke Rule} can be computationally expensive. 
A practical alternative is to perform this computation at prescribed intervals, following the approach used in stochastic gradient methods~\cite{jahn2020discrepancy,johnson2013accelerating}, 
or to exploit randomized SVD techniques in problems with intrinsic low-rank structure~\cite{Kluth2019Enhanced}. 
Similar computational overheads have been reported for the discrepancy principle~\cite{Gu2025SGDpenality,Huang2025early}. Mitigating these costs in the heuristic setting, potentially by adapting the strategy in~\cite{Huang2025early}, is a promising direction for future research.
\end{rema}

Algorithm~\ref{alg:IRSGD Algorithm} leads to the question of how to ensure that $u^\delta_{k_*(y^\delta)}$ converges to a solution of \ref{stcombined} as $\delta \to 0.$  Specifically, we seek to determine the conditions under which this convergence holds for a sequence of noisy data $\{y^\delta\}$, where $y^\delta \to y^\dag$ as $\delta \to 0$. 
 
 In general, the answer is inadequate, as stated by Bakushinskii's veto \cite{Bakushinskii veto}, which asserts that heuristic rules cannot guarantee convergence in the worst-case scenario for any regularization method. Subsequently, in \cite{hanke1996generalheuristic}, a noise condition was introduced for linear problems in Hilbert spaces. This condition was later generalized  under the Hanke-Raus rule (see Assumption \ref{new assumption}) to examine the convergence of nonlinear regularization methods as well as iterative regularization methods in Banach spaces \cite{JinQ2016, JinQ2017,  real2024hanke, Zhang and Jin Q}. We also utilize this condition in our work.

\begin{assum}[Noise condition]\label{new assumption}
    The family of noisy data $\{y^\delta\}$ satisfies the condition $ \|y^\delta - y^\dagger\| \rightarrow 0$ as $\delta \rightarrow 0$. Moreover, there exists a constant $\varkappa > 0$ such that for every $y^\delta$ and for each $u \in S(y^\delta)$, where $S(y^\delta) := \{ u^\delta_k : 0 \leq k \leq k_\infty \}$ is constructed using \ref{SIRLI 2}, there holds
\[
\|y^\delta - F(u)\| \geq \varkappa \|y^\delta - y^\dagger\|.
\]
\end{assum}
It directly follows from Assumption \ref{new assumption} that Rule \ref{Hanke Rule} yields a finite integer $k_*$. Indeed, if $k_\infty$ is finite, then $k_*$ is trivially finite. Therefore, we assume $k_\infty = \infty$.  From Assumption \ref{new assumption} there holds
\[
\Psi(k, y^\delta) = (k + a) \|F(u^\delta_k) - y^\delta\|^2 \geq (k + a) \varkappa \|y^\dagger - y^\delta\|^2 \rightarrow \infty
\]
as $k \rightarrow \infty$. Consequently,  a finite integer \( k_* \) exists that achieves the minimum of \( \Psi(k, y^\delta) \).

\begin{rema}
It is worth emphasizing that Assumption~\ref{new assumption}, while abstract and difficult to verify analytically, currently offers the most practical framework for theoretical analysis, as also noted in~\cite{real2024hanke}. Its validity can, at present, only be supported through numerical experiments. A detailed numerical investigation of heuristic parameter choice rules, beyond the Hanke–Raus rule, for the nonlinear Landweber iteration is provided in~\cite{HumberS}.
\end{rema}
\subsection{ Analysis for Heuristic Rule \ref{Hanke Rule}}\label{subsec-3.2}

This subsection aims to demonstrate that
\begin{equation}\label{conofhanke}
   \mathbb{E}\left[ \Psi(k_*(y^\delta), y^\delta) \right] \rightarrow 0 \quad \text{as} \quad \delta \rightarrow 0. 
\end{equation}
We show the same in Lemma \ref{secound lemma}.
Additionally, we demonstrate that IRSGD iterates $u_k^\delta$ remains in a ball around $u_0$  under certain conditions. 

To establish \eqref{conofhanke}, we introduce an auxiliary index \(\hat{k}_\delta\), determined by the following stopping criterion.

\begin{Rule}[Modified discrepancy]\label{modified discrepancy}
The stopping index \(\hat{k}_\delta\) is defined as the smallest integer satisfying
\begin{equation}\label{newdiscrepancy}
    \| F_i(u^\delta_{\hat{k}_\delta}) - y_i^\delta \| 
    + \frac{M}{\hat{k}_\delta + a} 
    \leq \tau \| y_i^\delta - y_i^\dagger \| 
    < \| F_i(u^\delta_k) - y_i^\delta \| 
    + \frac{M}{k + a},
    \quad \forall i = 0, 1, \ldots, P-1,
\end{equation}
for all \( 0 \leq k < \hat{k}_\delta \), where
\[
    M = \frac{\varrho \, c(\rho)^2 \, \tau}{\Omega \, (1+\varrho) \, (1+\eta)},
\]
with \(\varrho > 0\), \(\Omega > 0\), \(\eta \), \(\tau > 1\), and \(c(\rho)\) defined in \eqref{tangential}, \eqref{discrepancy}, and \eqref{c rho)}, respectively.
\end{Rule}

This modified discrepancy principle represents a slight augmentation of the classical version given in \eqref{discrepancy}, achieved by incorporating an additional penalization term. This term plays a pivotal role in the convergence analysis, particularly in establishing the result in \eqref{conofhanke}, as it guarantees that \(\hat{k}_\delta \to \infty\) as \(\delta \to 0\).

\begin{rema}
It is important to emphasize that both the stopping indices \(k_*\) and \(\hat{k}_\delta\) depend on the iterates \(u_k^\delta\), which are themselves random variables. Consequently, \(k_*\) and \(\hat{k}_\delta\) are also random quantities, adding a substantial layer of complexity to the theoretical analysis compared to the deterministic settings considered in~\cite{JinQ2016, JinQ2017, real2024hanke}.
\end{rema}
We now present an iterative scheme as Algorithm~\ref{alg:IRSGD_mod_disc} based on the modified discrepancy principle introduced in Rule~\ref{modified discrepancy}.  
In the subsequent result, we prove that all iterates generated by Algorithm~\ref{alg:IRSGD_mod_disc} remain within the ball  
\(B_{c(\rho)}(\hat{u})\), where \(c(\rho)\) is defined in~\eqref{c rho)}.

 \begin{algorithm}
\caption{IRSGD with modified discrepancy stopping}
\label{alg:IRSGD_mod_disc}
\begin{algorithmic}[1]
\STATE \textbf{Given:} operators $F_i$ and noisy data $y_i^\delta$ for $i=0, 1, \dots,P-1$; constant $a\geq 1$; parameters $\omega,\Omega>0$, $\lambda_{\max}\in[0,\tfrac12)$; discrepancy parameters $\tau>1$ and $M>0$ (as in \eqref{newdiscrepancy}).
\STATE \textbf{Initialize:} $u_0^\delta = u_0$ (initial guess), set $k \leftarrow 0$.
\REPEAT
    \STATE Choose $\omega_k\in[\omega,\Omega]$ and $\lambda_k$ with $0\le \lambda_k\le \lambda_{\max}$.
    \STATE Select index $i_k$ uniformly at random from $\{0,1,\dots,P-1\}$.
    \STATE Compute update
    \[
      u_{k+1}^\delta \;=\; u_k^\delta
      - \omega_k\, F'_{i_k}(u_k^\delta)^* \big( F_{i_k}(u_k^\delta) - y_{i_k}^\delta \big)
      - \lambda_k\,(u_k^\delta - u_0).
    \]
    \STATE Set $k \leftarrow k+1$.
\UNTIL{the modified-discrepancy condition \eqref{newdiscrepancy} is satisfied for the current iterate, i.e., the index
\(\hat{k}_\delta\) defined by
\[
\| F_i(u^\delta_{\hat{k}_\delta}) - y_i^\delta \| + \frac{M}{\hat{k}_\delta + a}
\leq \tau \| y_i^\delta - y_i^\dagger \|
\quad\text{for all } i=0, 1, \dots,P-1,
\]
holds (see Rule~\ref{modified discrepancy}).}
\RETURN{$u^\delta_{\hat{k}_\delta}$}
\end{algorithmic}
\end{algorithm}

\begin{lemma}\label{first lemma}
    Let Assumption $\ref{common assumptions}$ hold and let $\Omega, \omega$ be positive constants such that $\Omega\geq \omega_k \geq \omega$. Moreover, let $\kappa \in (0, 1)$ and $\varrho>0$ be a sufficiently small constant (see Remark $\ref{rema 2}$) and 
    \begin{equation}\label{c rho)}
        c(\rho) : = \rho\lambda_{\textbf{max}} \frac{(1-\lambda_{\textbf{max}}) +\sqrt{(1- \lambda_{\textbf{max}})^2 +  (\frac{1}{1+ \varrho} - (1-\lambda_{\text{max}})^2)(1 + \kappa^{-2}\Omega^2 L^2)}}{(\frac{1}{1+ \varrho} - (1-\lambda_{\text{max}})^2)}. 
    \end{equation} 
    Then, under the following conditions 
    \begin{equation}\label{condition for first lemma}
       D:=  2\omega(1-\lambda_{\text{max}})(1 - \eta) - \Omega^2 L^2 - \kappa^2 - \tau^{-1}\Omega (2+M)(1 + \eta)   > 0,
    \end{equation} 
    \begin{equation}\label{cond 2 for first lemma}
        \sum_{k=0}^{\infty} \lambda_k \leq \lambda < \infty,
    \end{equation}
    there holds, for any $\hat{u}$ fulfilling $\ref{stcombined}$ and for any $0 \leq k < \hat{k}_{\delta}$, if $u_{k}^{\delta} \in B_{c(\rho)}(\hat{u})$, then $ u_{k+1}^{\delta} \in B_{c(\rho)}(\hat{u}) \subset B_{\rho +c(\rho)}(u_0).$
\end{lemma}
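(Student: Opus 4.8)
The plan is to establish the inductive step $u_k^\delta \in B_{c(\rho)}(\hat u) \Rightarrow u_{k+1}^\delta \in B_{c(\rho)}(\hat u)$ by expanding $\|u_{k+1}^\delta - \hat u\|^2$ directly from the update rule \eqref{SIRLI 2}. Writing $e_k := u_k^\delta - \hat u$ and $r_k := F_{i_k}(u_k^\delta) - y_{i_k}^\delta$, I would substitute the definition of $u_{k+1}^\delta$ and expand the square into the three contributions coming from the data term $-\omega_k F'_{i_k}(u_k^\delta)^* r_k$, the damping term $-\lambda_k(u_k^\delta - u_0)$, and their cross products with $e_k$. The key inner product to control is $\langle e_k,\, F'_{i_k}(u_k^\delta)^* r_k\rangle = \langle F'_{i_k}(u_k^\delta) e_k,\, r_k\rangle$, and here the tangential cone condition \eqref{tangential} applied with $\overline u = u_k^\delta$, $u = \hat u$ (using $F_i(\hat u) = y_i^\dagger$) is what converts the linearized term $F'_{i_k}(u_k^\delta) e_k$ into something comparable to the genuine residual $r_k$, at the cost of the factor $(1+\eta)$ or $(1-\eta)$.

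First I would derive the standard monotonicity-type estimate for the data term alone: using \eqref{tangential} and \eqref{bddness of F_i'} one obtains a bound of the form $-2\omega_k\langle F'_{i_k}(u_k^\delta)e_k, r_k\rangle + \omega_k^2 \|F'_{i_k}(u_k^\delta)^* r_k\|^2 \le -\big(2\omega(1-\eta) - \Omega^2 L^2\big)\|r_k\|^2$ plus lower-order terms, exploiting $\omega \le \omega_k \le \Omega$ and $L \le 1/(2\omega)$. Next I would absorb the damping and cross terms: the term $2\lambda_k\langle e_k, u_k^\delta - u_0\rangle$ and $\lambda_k^2\|u_k^\delta - u_0\|^2$ must be handled via $\lambda_k \le \lambda_{\max} < 1/2$ and the triangle inequality $\|u_k^\delta - u_0\| \le \|e_k\| + \|\hat u - u_0\| \le c(\rho) + \rho$, which is precisely where the radius $c(\rho)$ defined in \eqref{c rho)} enters. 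A Young/Cauchy–Schwarz splitting with the free parameter $\kappa$ lets me trade the cross term against $\kappa^2\|r_k\|^2$ and an $\|e_k\|^2$ contribution, which explains the $-\kappa^2$ in the definition \eqref{condition for first lemma} of $D$.

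The crucial quantitative input is that $k < \hat k_\delta$, so the modified discrepancy principle \eqref{newdiscrepancy} gives the reverse inequality $\|F_i(u_k^\delta) - y_i^\delta\| + M/(k+a) > \tau\|y_i^\delta - y_i^\dagger\|$, hence a lower bound on the residual $\|r_k\|$ relative to the noise. This is exactly what produces the term $-\tau^{-1}\Omega(2+M)(1+\eta)$ in $D$: the penalization $M/(k+a)$ and the noise contribution $\|y_{i_k}^\delta - y_{i_k}^\dagger\|$ get controlled by $\tau^{-1}\|r_k\|$ up to the tangential-cone factor $(1+\eta)$, so that the noise-induced increase of $\|e_{k+1}\|^2$ is dominated by the residual-decrease term. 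Collecting everything, I expect to reach an inequality of the shape
\begin{equation*}
\|e_{k+1}\|^2 \le \|e_k\|^2 - D\,\|r_k\|^2 + (\text{terms controlled by } \lambda_k),
\end{equation*}
where $D > 0$ by \eqref{condition for first lemma}. The remaining task is to verify that the accumulated effect still keeps $u_{k+1}^\delta$ inside $B_{c(\rho)}(\hat u)$; this is where the precise algebraic form of $c(\rho)$ in \eqref{c rho)} is engineered so that, starting from $\|e_k\| \le c(\rho)$, the worst-case increase from the damping term is exactly reabsorbed. I would close this by showing that $c(\rho)$ is the relevant root of the quadratic obtained from the bound $\|u_k^\delta - u_0\| \le c(\rho) + \rho$ combined with the coefficient $(1 - \lambda_{\max})^2 - \frac{1}{1+\varrho}$, guaranteeing $\|e_{k+1}\| \le c(\rho)$.

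The main obstacle I anticipate is the simultaneous bookkeeping of the damping term and the noise/penalization term: unlike the pure Landweber or pure SGD analyses, here one cannot simply drop the damping (it is what keeps iterates near $u_0$) yet it introduces a non-contractive $\lambda_k^2\|u_k^\delta - u_0\|^2$ contribution that must be balanced against the strictly positive residual-decrease $D\|r_k\|^2$. Getting the constant $c(\rho)$ to be exactly the stated root — so that the ball $B_{c(\rho)}(\hat u)$ is genuinely invariant rather than merely slowly expanding — will require carefully choosing how the cross term $2\lambda_k\langle e_k, u_k^\delta-u_0\rangle$ is split (via $\varrho$ and $\kappa$) and then solving the resulting quadratic inequality in $\|e_{k+1}\|$; the inclusion $B_{c(\rho)}(\hat u) \subset B_{\rho+c(\rho)}(u_0)$ then follows immediately from $\hat u \in B_\rho(u_0)$ by the triangle inequality, which also retroactively justifies applying \eqref{bddness of F_i'} and \eqref{tangential} on $B_{\rho+c(\rho)}(u_0)$ throughout the induction.
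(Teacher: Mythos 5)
Your proposal is correct in outline and follows essentially the same route as the paper's proof: expand $\|u_{k+1}^\delta-\hat u\|^2$ from the update, use the tangential cone condition and the bound on $F_{i_k}'$ to convert the linearized term into the residual, apply Young's inequality with the parameter $\kappa$ to the damping--data cross term, invoke the modified discrepancy inequality for $k<\hat k_\delta$ to control the noise term, discard the residual term via $D>0$, and close the ball invariance through the quadratic whose positive root is $c(\rho)$, finishing with the triangle inequality for $B_{c(\rho)}(\hat u)\subset B_{\rho+c(\rho)}(u_0)$. One bookkeeping point to fix in execution: the damping contributions cannot be balanced against the residual decrease $D\|r_k\|^2$ (which may vanish), nor does the triangle-inequality bound $\|u_k^\delta-u_0\|\le c(\rho)+\rho$ alone suffice; instead one regroups $u_{k+1}^\delta-\hat u=(1-\lambda_k)(u_k^\delta-\hat u)+\lambda_k(u_0-\hat u)-\omega_k F_{i_k}'(u_k^\delta)^*\bigl(F_{i_k}(u_k^\delta)-y_{i_k}^\delta\bigr)$ so that the contraction factor $(1-\lambda_k)^2$ on $\|u_k^\delta-\hat u\|^2$ supplies the slack --- exactly what your final quadratic with coefficient $(1-\lambda_{\max})^2-\tfrac{1}{1+\varrho}$ presupposes, and what the paper does, with the leftover $M/(k+a)^2$ term absorbed by the $\varrho$-slack through the specific choice of $M$.
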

\begin{proof}
The result is proved by mathematical induction.
For the case \(k = 0\), the claim follows directly from condition~(iii) of Assumption~\ref{common assumptions}.  
Now, assume as the induction hypothesis that \(u_k^\delta \in B_{c(\rho)}(\hat{u})\) holds for all \(0 \leq k < \hat{k}_\delta\).  
We will prove that this property also holds for the subsequent iterate, i.e., \(u_{k+1}^\delta \in B_{c(\rho)}(\hat{u})\).  
From~\eqref{SIRLI 2}, we have

\begin{align}\label{3.9}
\nonumber
\|u^\delta_{k+1} - \hat{u}\|^2
 & = (1 - \lambda_k)^2 \|u^\delta_k - \hat{u}\|^2 + \lambda_k^2 \|u_0 - \hat{u}\|^2  + 2 \lambda_k (1 - \lambda_k) \langle u^\delta_k - \hat{u}, u_0 - \hat{u} \rangle \\ 
& \hspace{5mm} + \omega_k^2\|F'_{i_k}(u^\delta_k)^*(y_{i_k}^\delta - F_{i_k}(u^\delta_k))\|^2 + 2\omega_k (1 - \lambda_k) \langle y_{i_k}^\delta - F_{i_k}(u^\delta_k), F'_{i_k}(u^\delta_k)(u^\delta_k - \hat{u}) \rangle \\ \nonumber
& \hspace{5mm}+ 2\omega_k \lambda_k \langle u_0 -\hat{u}, F'_{i_k}(u^\delta_k)^*(y_{i_k}^\delta - F_{i_k}(u^\delta_k)) \rangle.
\end{align}
To estimate \ref{3.9}, we can write the last two terms of the sum as
\begin{align}\label{3.10}
    \langle y_{i_k}^\delta - F_{i_k}(u^\delta_k), F_{i_k}'(u^\delta_k)(u^\delta_k - \hat{u}) \rangle \nonumber &= \langle y_{i_k}^\delta - F_{i_k}(u^\delta_k), y_{i_k}^\delta - y_{i_k}^{\dagger} \rangle - \| y_{i_k}^\delta - F_{i_k}(u^\delta_k) \|^2 \\
& \hspace{2mm} - \langle y_{i_k}^\delta - F_{i_k}(u^\delta_k), F_{i_k}(u^\delta_k) - F_{i_k}(\hat{u}) - F_{i_k}'(u^\delta_k)(u^\delta_k - \hat{u}) \rangle,
\end{align}
\begin{align}\label{3.11}
    \nonumber
    2\omega_k \lambda_k \langle u_0 -\hat{u}, F'_{i_k}(u^\delta_k)^*(y_{i_k}^\delta - F_{i_k}(u^\delta_k)) \rangle & \leq 2\lambda_k\omega_k L \|u_0 - \hat{u}\| \|y_{i_k}^\delta - F_{i_k}(u^\delta_k)\| \\ 
    & \leq  \kappa^{-2} \lambda_k^2 \omega_k^2 L^2 \|u_0 - \hat{u}\|^2 + \kappa^2 \|y_{i_k}^\delta - F_{i_k}(u_{k}^{\delta})\|^2,
\end{align}
where we have used \ref{bddness of F_i'} and the inequality $2ab\leq a^2+b^2$ for $a, b\in \mathbb{R}$ in obtaining \ref{3.11}.

By incorporating  \ref{bddness of F_i'}, \ref{tangential}, \ref{3.10}, and \ref{3.11}  in  \ref{3.9}, we attain that
\begin{align} \label{solved in Lemma 1}
\|u^\delta_{k+1} - \hat{u}\|^2 \nonumber &\leq  (1 - \lambda_k)^2 \|u^\delta_k - \hat{u}\|^2 + \lambda_k^2 (1 + \kappa^{-2}\omega_k^2 L^2) \|u_0 - \hat{u}\|^2 \\ \nonumber
& \quad + 2 \lambda_k (1 - \lambda_k) \|u^\delta_k - \hat{u}\| \|u_0 - \hat{u}\| \\
&\quad + \|y_{i_k}^\delta - F_{i_k}(u^\delta_k)\| \big( 2\omega_k (1 - \lambda_k)(1 + \eta) \|y_{i_k}^\dagger - y_{i_k}^{\delta}\| \\ \nonumber
 & \quad- \|y_{i_k}^\delta - F_{i_k}(u^\delta_k)\| (2\omega_k(1-\lambda_k)(1 - \eta) - \omega_k^2 L^2 - \kappa^2) \big). \nonumber
\end{align}
Next, according to the definition of $\hat{k}_{\delta}$ in \ref{newdiscrepancy}, for $k < \hat{k}_{\delta}$ we further derive that
\begin{equation} \label{disc used in lemma 1}
   \|F_{i_k}(u_{k}^{\delta}) - y_{i_k}^{\delta}\|\|y_{i_k}^\dagger - y_{i_k}^{\delta}\| \leq \frac{1}{\tau} \left(\|F_{i_k}(u_{k}^{\delta}) - y_{i_k}^{\delta}\|^2 + \frac{M}{k+a} \|F_{i_k}(u_{k}^{\delta}) - y_{i_k}^{\delta}\| \right).      
\end{equation}
Using the fact that $ \omega_k \in [\omega, \Omega], u_k^\delta \in  B_{c(\rho)}(\hat{u})$ and $\hat{u} \in  B_{\rho}(u_0)$ along with inequality \ref{disc used in lemma 1} in \ref{solved in Lemma 1}, we have
\begin{align}\label{3.14}
    \|u^\delta_{k+1} - \hat{u}\|^2 \nonumber &\leq  (1 - \lambda_k)^2 c(\rho)^2 + \lambda_k^2 (1 + \kappa^{-2}\Omega^2 L^2) \rho^2 + 2 \lambda_k (1 - \lambda_k)\rho c(\rho) \\
& \quad -  \left( (2\omega_k(1-\lambda_k)(1 - \eta) - \omega_k^2 L^2 - \kappa^2) - 2\omega_k\tau^{-1} (1 - \lambda_k)(1 + \eta)  \right)\|y_{i_k}^\delta - F_{i_k}(u^\delta_k)\|^2\\ \nonumber
& \quad +\frac{2M\omega_k(1-\lambda_k)(1+\eta)\|y_{i_k}^{\delta} - F_{i_k}(u_k^\delta)\|}{\tau(k+a)}. \nonumber
\end{align}
We note that \[(1+\varrho)\big((1 - \lambda_k)^2 c^2 + \lambda_k^2 (1 + \kappa^{-2}\Omega^2 L^2) \rho^2 + 2 \lambda_k (1 - \lambda_k)\rho c\big) \leq c^2,\] 
provided
\[ c \bigg(\frac{1}{1+ \varrho} - (1-\lambda_k)^2\bigg) \geq  \rho\lambda_k (1-\lambda_k) +\sqrt{(1 -\lambda_k) ^2 +   \bigg(\frac{1}{1+ \varrho} - (1-\lambda_k)^2\bigg)(1 + \kappa^{-2}\Omega^2 L^2)},\]
which is true for $c(\rho)$. From \ref{c rho)} along with the inequality $2ab \leq a^2 + b^2, \forall \hspace{2mm} a, b \in \mathbb{R}$, \ref{3.14},  $\omega_k \in [\omega, \Omega], 0 \leq \lambda_k \leq \lambda_{\text{max}} <1$ and $1-\lambda_k \leq 1$, we get 
\begin{align*}
\|u^\delta_{k+1} - \hat{u}\|^2 \nonumber  &\leq -  \left(  (2\omega_k(1-\lambda_k)(1 - \eta) - \omega_k^2 L^2 - \kappa^2) - (2+M)\omega_k\tau^{-1} (1 - \lambda_k)(1 + \eta)  \right)\|y_{i_k}^\delta - F_{i_k}(u^\delta_k)\|^2\\
& \quad + \frac{c(\rho)^2}{1+\varrho} +\frac{M\omega_k(1-\lambda_k)(1+\eta)}{\tau(k+a)^2}\\
   &\leq  -  \left( (2\omega(1-\lambda_{\text{max}})(1 - \eta) - \Omega^2 L^2 - \kappa^2) - (2+M)\Omega\tau^{-1} (1 + \eta)  \right)\|y_{i_k}^\delta - F_{i_k}(u^\delta_k)\|^2\\
& \hspace{10mm} +\frac{c(\rho)^2}{1+\varrho} +\frac{M\Omega(1+\eta)}{\tau(k+a)^2}.
\end{align*}
This with \ref{condition for first lemma}, $M = \frac{\varrho c(\rho)^2\tau}{\Omega(1+ \varrho)(1+\eta)}$ and the inequality $k+a \geq 1$ implies that
\[ \|u^\delta_{k+1} - \hat{u}\|^2 \leq \frac{c(\rho)^2}{1+ \varrho} + \frac{\varrho c(\rho)^2}{1+ \varrho} = c(\rho)^2.\]
This means that $u_{k+1}^\delta \in B_{c(\rho)}(\hat{u})$. Further, by engaging the triangular inequality and the fact that $\|u_0 - \hat{u}\| \leq \rho$, we conclude that 
\begin{align*}
    \|u^\delta_{k+1} - u_{0}\| &\leq \|u^\delta_{k+1} - \hat{u}\| + \|\hat{u} - u_{0}\| \leq \rho + c(\rho),
\end{align*}
which implies that $u_{k+1}^\delta \in B_{\rho + c(\rho)}(u_0)$. The desired result is thus established.
\end{proof}

\begin{rema}\label{rema 2} For the definition of \( c(\rho) \) to be meaningful, there must exist a positive real root of the quadratic equation
\[
\left((1 - \lambda_k)^2 - \frac{1}{1 + \varrho}\right)c^2 + \lambda_k^2 (1 + \kappa^{-2} \Omega^2 L^2) \rho^2 + 2 \lambda_k (1 - \lambda_k) \rho c = 0,
\]
which is satisfied if the condition \( \left((1 - \lambda_k)^2 - \frac{1}{1 + \varrho}\right) < 0 \) holds. Therefore, a sufficiently small positive constant \( \varrho \) is required in \ref{c rho)}, due to the restriction that \( \lambda_k \in [0, \frac{1}{2}) \).
\end{rema}
We first show that Algorithm~\ref{alg:IRSGD_mod_disc} is well-defined. Specifically, for 
\[
\|y_i^\delta - y_i^\dagger\|_{i \in \{0, 1, \ldots, P-1\}} > 0,
\] 
the modified discrepancy principle~\eqref{newdiscrepancy} yields a finite stopping index, ensuring that the algorithm terminates in finitely many steps. Moreover, we prove that \(\hat{k}_\delta \to \infty\) as \(\delta \to 0\). Here, the expectation \(\mathbb{E}\)  and probability \(\mathbb{P}\) are taken with respect to the filtration \(\{\mathcal{F}_k\}_{k \ge 0}\), where \(\mathcal{F}_k\) is the \(\sigma\)-algebra generated by the random indices \(i_n\) for \(0 \le n < k\). The formal result is stated below.

\begin{theorem}\label{kdelta hat finite}
    Under the assumptions of Lemma~$\ref{first lemma}$, the stopping index $\hat{k}_{\delta}$ as defined in  $\ref{newdiscrepancy}$ almost surely remains finite for  $\|y_i^\delta - y_i^\dagger\|_{i \in \{0, 1, \ldots, P-1\}} > 0$ and $\hat{k}_{\delta} \rightarrow \infty$ as $\delta \rightarrow 0.$
\end{theorem}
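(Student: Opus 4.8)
The plan is to treat the two assertions separately, since the divergence $\hat k_\delta\to\infty$ is read off directly from the stopping inequality, whereas the almost-sure finiteness requires a stopped-supermartingale argument. I would do the easy part first. At the stopping step $k=\hat k_\delta$, the defining inequality \eqref{newdiscrepancy} gives $\frac{M}{\hat k_\delta+a}\le\tau\|y_i^\delta-y_i^\dagger\|$ for \emph{every} $i$, hence $\frac{M}{\hat k_\delta+a}\le\tau\min_i\|y_i^\delta-y_i^\dagger\|$ and therefore the \emph{deterministic} lower bound
\[
\hat k_\delta\ \ge\ \frac{M}{\tau\,\min_i\|y_i^\delta-y_i^\dagger\|}\ -\ a .
\]
As each $\|y_i^\delta-y_i^\dagger\|\le\delta_i\le\delta\to0$, the right-hand side tends to $+\infty$, so $\hat k_\delta\to\infty$ on every sample path. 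This is precisely the role of the penalty $\frac{M}{k+a}$ that distinguishes Rule~\ref{modified discrepancy} from the classical principle~\eqref{discrepancy}.

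For finiteness I would build the recursion from the proof of Lemma~\ref{first lemma}. Starting from \eqref{solved in Lemma 1}, applying the discrepancy estimate \eqref{disc used in lemma 1} (which holds for \emph{every} index on the event $\{\hat k_\delta>k\}$), and taking the conditional expectation $\mathbb{E}[\,\cdot\mid\mathcal F_k]$, I would use that $i_k$ is uniform and independent of $\mathcal F_k$ so that $\mathbb{E}[\|y_{i_k}^\delta-F_{i_k}(u_k^\delta)\|^2\mid\mathcal F_k]=\frac1P\|y^\delta-F(u_k^\delta)\|^2$. Combined with $\|u_k^\delta-\hat u\|\le c(\rho)$ from Lemma~\ref{first lemma}, $(1-\lambda_k)^2\le1$, and $D>0$ from \eqref{condition for first lemma}, this yields, on $\{\hat k_\delta>k\}$,
\[
\mathbb{E}\big[\|u_{k+1}^\delta-\hat u\|^2\mid\mathcal F_k\big]\ \le\ \|u_k^\delta-\hat u\|^2-\frac{D}{P}\|y^\delta-F(u_k^\delta)\|^2+b_k,
\]
where $b_k$ collects the $O(\lambda_k)$ damping terms and the $O((k+a)^{-2})$ penalty term, so that $\sum_k b_k<\infty$ by \eqref{cond 2 for first lemma} and $\sum_k(k+a)^{-2}<\infty$.

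Next I would pass to the stopped process $u_{k\wedge\hat k_\delta}^\delta$. Since $\{\hat k_\delta>k\}$ is $\mathcal F_k$-measurable, the estimate upgrades to
\[
\mathbb{E}\big[\|u_{(k+1)\wedge\hat k_\delta}^\delta-\hat u\|^2\mid\mathcal F_k\big]\ \le\ \|u_{k\wedge\hat k_\delta}^\delta-\hat u\|^2-\frac{D}{P}\,\mathbf 1_{\{\hat k_\delta>k\}}\|y^\delta-F(u_k^\delta)\|^2+\mathbf 1_{\{\hat k_\delta>k\}}b_k .
\]
Taking full expectations, telescoping over $k$, and using nonnegativity of the left side gives $\sum_{k\ge0}\mathbb{E}[\mathbf 1_{\{\hat k_\delta>k\}}\|y^\delta-F(u_k^\delta)\|^2]<\infty$. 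I would then extract a uniform positive residual floor before stopping: on $\{\hat k_\delta>k\}$, \eqref{newdiscrepancy} forces $\|F_i(u_k^\delta)-y_i^\delta\|>\tau\|y_i^\delta-y_i^\dagger\|-\frac{M}{k+a}$ for all $i$, so once $\frac{M}{k+a}\le\frac\tau2\min_i\|y_i^\delta-y_i^\dagger\|$ (i.e.\ $k\ge K_0(\delta)$) one gets $\|y^\delta-F(u_k^\delta)\|^2>\gamma_\delta:=P\big(\tfrac\tau2\min_i\|y_i^\delta-y_i^\dagger\|\big)^2>0$. Since $\mathbf 1_{\{\hat k_\delta>k\}}\|y^\delta-F(u_k^\delta)\|^2\ge\gamma_\delta\,\mathbf 1_{\{\hat k_\delta=\infty\}}$ for $k\ge K_0(\delta)$, the finiteness of the sum is incompatible with $\mathbb{P}(\hat k_\delta=\infty)>0$, forcing $\mathbb{P}(\hat k_\delta=\infty)=0$, the asserted almost-sure finiteness.

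I expect the main obstacle to be the rigorous bookkeeping of the random stopping index: invoking the one-step estimate only on the $\mathcal F_k$-measurable event $\{\hat k_\delta>k\}$, passing correctly to the stopped process so the telescoping is legitimate, and pinning down the $\delta$-dependent threshold $K_0(\delta)$ and residual floor $\gamma_\delta$ that convert summability into $\mathbb{P}(\hat k_\delta=\infty)=0$. By comparison the deterministic ingredients (boundedness from Lemma~\ref{first lemma}, summability of $b_k$, and the lower bound yielding divergence) are routine.
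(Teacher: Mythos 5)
Your proposal is correct and follows essentially the same route as the paper: both derive summability of the expected squared residuals by telescoping the descent estimate from Lemma~\ref{first lemma} (after conditioning on the uniformly drawn index $i_k$), then obtain a contradiction between this summability and a positive residual lower bound forced by \eqref{newdiscrepancy} on the event $\{\hat{k}_\delta = \infty\}$, and both read off $\hat{k}_\delta \to \infty$ directly from the penalty term $M/(\hat{k}_\delta + a)$. The only cosmetic differences are that you handle the transient $M/(k+a)$ term via a threshold $K_0(\delta)$ and a constant floor $\gamma_\delta$ while the paper uses the elementary inequality $(l-m)^2 \ge \tfrac12 l^2 - m^2$ to get a bound valid for all $k$, and your stopped-process/indicator bookkeeping makes rigorous and explicit what the paper does by summing the per-step estimates globally on the non-stopping event.
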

\begin{proof}
 Our objective is to establish that
\[
\mathbb{P}(\hat{k}_\delta = \infty) = 0 \quad \text{for } \|y_i^\delta - y_i^\dagger\| > 0,\quad i = 0, 1, \ldots, P-1.
\]
By the definition of \(\hat{k}_\delta\), the event \(\Psi := \{ \hat{k}_\delta = \infty \}\) can be equivalently expressed as
\[ \Psi = \left\{ \|F_{i}(u_k^\delta) - y_i^\delta\| + \frac{M}{(k+a)} > \tau \|y_i^\delta - y_i^\dagger\| ; i=0, 1, \ldots P-1 ; \quad \text{for all integer } k \geq 0 \right\}.\]
We now establish the desired result.  
Recall from \eqref{solved in Lemma 1}, \eqref{disc used in lemma 1}, and \eqref{condition for first lemma} that for 
$\lambda_k \in [0,\tfrac12)$ and $\omega_k \in [\omega,\Omega]$, we have
\begin{align}
\nonumber
\|u_{k+1}^\delta - \hat{u}\|^2 
+ D\|y_{i_k}^\delta - F_{i_k}(u_{k}^\delta)\|^2 
& \leq (1 - \lambda_k)^2 \|u_{k}^\delta -\hat{u}\|^2 
+ \lambda_k^2 \big(1 + \kappa^{-2}\omega_k^2 L^2\big)\rho^2 \\
\nonumber
&\quad + 2 \lambda_k (1 - \lambda_k) \|u_{k}^\delta - \hat{u}\|\,\rho  
+ \frac{M\omega_k(1 - \lambda_k)(1+ \eta)}{\tau(k + a)^2}.
\end{align}

Since $(1-\lambda_k)^2 \le 1$ and $\|u_k^\delta - \hat{u}\| \le c(\rho)$ by Lemma~\ref{first lemma}, the above yields
\begin{align}\label{ineq_after_simplification}
\nonumber
\|u_{k+1}^\delta - \hat{u}\|^2 
+ D\|y_{i_k}^\delta - F_{i_k}(u_{k}^\delta)\|^2
& \le \|u_{k}^\delta -\hat{u}\|^2 
+ \lambda_k^2 \big(1 + \kappa^{-2}\omega_k^2 L^2\big)\rho^2 \\
&\quad + 2\lambda_k \rho\, c(\rho) 
+ \frac{M\Omega(1 - \lambda_k)(1+ \eta)}{\tau(k + a)^2}.
\end{align}

Next, using $\lambda_k^2 \le \lambda_k$ and $\omega_k \le \Omega$, and noting that 
$u_k^\delta$ is $\mathcal{F}_k$–measurable while $i_k$ is chosen uniformly from $\{0,1, \dots,P-1\}$ independent of $\mathcal{F}_k$, we take conditional expectation with respect to $\mathcal{F}_k$.  
By the independence of $i_k$, the expected residual term becomes the average over all $P$ components.  
Thus \eqref{ineq_after_simplification} implies
\begin{align}\label{eqn_of_lemma1_part2}
\nonumber
&\mathbb{E}\big[\|u_{k+1}^\delta - \hat{u}\|^2 \,\big|\, \mathcal{F}_k\big] 
+ \frac{D}{P} \sum_{i=0}^{P-1}\|y_i^\delta - F_i(u_{k}^\delta)\|^2  \\
&\quad \leq \mathbb{E}\big[\|u_{k}^\delta -\hat{u}\|^2 \,\big|\, \mathcal{F}_k\big] 
+ \rho^2\big(1 + \kappa^{-2}\Omega^2 L^2\big)\lambda_k 
+ 2\rho\, c(\rho)\,\lambda_k 
+ \frac{M\Omega(1 - \lambda_k)(1+ \eta)}{\tau(k + a)^2}.
\end{align}

We now take full expectation in \eqref{eqn_of_lemma1_part2} and sum from $k=0$ to $t-1$.  
The left–hand side telescopes in the first term, while the residual terms remain additive.  
Using $\mathbb{E}[\|u_t^\delta - \hat{u}\|^2] \ge 0$, we arrive at
\begin{equation}\label{l0_readable}
\begin{split}
\sum_{k=0}^{t -1} \mathbb{E}\left[ \frac{1}{P}\sum_{i=0}^{P-1} \|y_i^\delta - F_i(u^\delta_k)\|^2 \right] 
&\leq  \frac{1}{D}\bigg( \mathbb{E}[\|u_0^\delta-\hat u\|^2] \\
&\quad +\big(\rho^2(1 + \kappa^{-2}\Omega^2 L^2) + 2\rho\, c(\rho)\big) \sum_{k=0}^{t-1} \lambda_k \\
&\quad + \frac{M\Omega(1+ \eta)}{\tau} \sum_{k=0}^{t-1}\frac{1 - \lambda_k}{(k + a)^2} \bigg).
\end{split}
\end{equation}

Finally, recalling that the initial guess $u_0^\delta$ is deterministic (or $\mathcal F_0$--measurable) and satisfies
$\|u_0^\delta-\hat u\|\le\rho$. Hence
\(\mathbb{E}[\|u_0^\delta-\hat u\|^2]=\|u_0^\delta-\hat u\|^2\le\rho^2\).
 So the bound \eqref{l0_readable} simplifies to
\begin{equation}\label{l0}
\begin{split}
\sum_{k=0}^{t -1} \mathbb{E}\left[ \frac{1}{P}\sum_{i=0}^{P-1} \|y_i^\delta - F_i(u^\delta_k)\|^2 \right] 
&\leq  \frac{1}{D}\bigg( \rho^2 
+\big(\rho^2(1 + \kappa^{-2}\Omega^2 L^2) + 2\rho\, c(\rho)\big) \sum_{k=0}^{t-1} \lambda_k \\
&\quad + \frac{M\Omega(1+ \eta)}{\tau} \sum_{k=0}^{t-1}\frac{1 - \lambda_k}{(k + a)^2} \bigg).
\end{split}
\end{equation}

By applying \ref{cond 2 for first lemma} and the result $\sum_{k=0}^{\infty} \frac{1}{(k + a)^2} \leq \sum_{k=1}^{\infty} \frac{1}{k^2}  = \frac{\pi^2}{6}$ for $a \geq 1$ in \eqref{l0}, we  further get
\begin{equation}\label{sum fini}
 \sum_{k=0}^{t -1} \mathbb{E}\left[  \|y_{i_k}^\delta - F_{i_k}(u^\delta_k)\|^2 \right] =  \sum_{k=0}^{t -1} \mathbb{E}\left[\frac{1}{P} \sum_{i=0}^{P-1} \|y_i^\delta - F_i(u^\delta_k)\|^2 \right] < \infty.
\end{equation}
Now, define \(\chi_\Psi\) as the characteristic function associated with the set \(\Psi\), such that
\[
\chi_\Psi(w) =
\begin{cases}
1, & \text{if } w \in \Psi, \\
0, & \text{if } w \notin \Psi.
\end{cases}
\]
By definition of $\Psi$, for every integer $k\ge0$ we have, for all $i=0, 1, \dots,P-1$,
\[
\|y_i^\delta - F_i(u_k^\delta)\| > \tau\|y_i^\delta-y_i^\dagger\| - \frac{M}{k+a}.
\]
Using the elementary inequality $(l-m)^2\ge \tfrac12 l^2 - m^2$ with
$l=\tau\|y_i^\delta-y_i^\dagger\|$ and $m=\dfrac{M}{k+a}$, we obtain for each $i$ on $\Psi$
\[
\|y_i^\delta - F_i(u_k^\delta)\|^2 
\ge \tfrac12\tau^2\|y_i^\delta-y_i^\dagger\|^2 - \frac{M^2}{(k+a)^2}.
\]
Multiplying by the indicator $\chi_\Psi$ and taking expectation, then averaging over the uniformly sampled index $i_k$, gives
\begin{equation*}
\begin{split}
\mathbb{E}\big[\|y_{i_k}^\delta - F_{i_k}(u_k^\delta)\|^2\big]
&\ge \mathbb{E}\big[\|y_{i_k}^\delta - F_{i_k}(u_k^\delta)\|^2\chi_\Psi\big] \\
&= \mathbb{E}\Big[\frac{1}{P}\sum_{i=0}^{P-1}\|y_i^\delta - F_i(u_k^\delta)\|^2\chi_\Psi\Big]\\
&\ge \frac{\tau^2}{2P}\,\mathbb{E}\Big[\sum_{i=0}^{P-1}\|y_i^\delta-y_i^\dagger\|^2\chi_\Psi\Big]
- \frac{M^2}{(k+a)^2}\,\mathbb{E}[\chi_\Psi] \\
&\ge \frac{\tau^2}{2P} \sum_{i=0}^{P-1}\|y_i^\delta-y_i^\dagger\|^2\,\mathbb{P}(\Psi) - \frac{M^2}{(k+a)^2}\,\mathbb{P}(\Psi).
\end{split}
\end{equation*}


Sum the last inequality over $k=0,\dots,t-1$ and use linearity of expectation, we arrive at 
\begin{equation}\label{per-k-tower}
\sum_{k=0}^{t-1}\mathbb{E}\big[\|y_{i_k}^\delta - F_{i_k}(u_k^\delta)\|^2\big]
\ge \frac{\tau^2}{2P} \sum_{i=0}^{P-1}\|y_i^\delta-y_i^\dagger\|^2t\,\mathbb{P}(\Psi)
- M^2\,\mathbb{P}(\Psi)\sum_{k=0}^{t-1}\frac{1}{(k+a)^2}.
\end{equation}
By \eqref{sum fini} there exists a constant $\mathfrak{C}>0$ independent of $t$ such that
\[
\sum_{k=0}^{t-1}\mathbb{E}\big[\|y_{i_k}^\delta - F_{i_k}(u_k^\delta)\|^2\big] \le \mathfrak{C}.
\]
Combining this with \eqref{per-k-tower} gives
\[
\mathfrak{C} \ge \frac{\tau^2}{2P} \sum_{i=0}^{P-1}\|y_i^\delta-y_i^\dagger\|^2\, t\,\mathbb{P}(\Psi)
- M^2\,\mathbb{P}(\Psi)\sum_{k=0}^{t-1}\frac{1}{(k+a)^2}.
\]
Rearrange to obtain
\[
\mathbb{P}(\Psi)\left(\frac{\tau^2}{2P} \sum_{i=0}^{P-1}\|y_i^\delta-y_i^\dagger\|^2\, t - M^2\sum_{k=0}^{t-1}\frac{1}{(k+a)^2}\right) \le \mathfrak{C}.
\]
The series  $\sum_{k=0}^{\infty} \frac{1}{(k + a)^2} \leq \sum_{k=1}^{\infty} \frac{1}{k^2}  = \frac{\pi^2}{6}$, hence
\[
\mathbb{P}(\Psi)\left(\frac{\tau^2}{2P} \sum_{i=0}^{P-1}\|y_i^\delta-y_i^\dagger\|^2\, t - \frac{M^2 \pi^2}{6}\right) \le \mathfrak{C}.
\]
Letting \(t \to \infty\), we observe that the term in parentheses on the left--hand side grows without bound whenever \(\mathbb{P}(\Psi) > 0\). This would contradict the boundedness of the left--hand side unless \(\mathbb{P}(\Psi) = 0\). Therefore, we conclude that  
\[
\mathbb{P}(\Psi) = 0, \quad\text{i.e.,}\quad \mathbb{P}(\hat{k}_\delta = \infty) = 0.
\]  

This implies that the complement of the event \(\Psi\) occurs with probability one, i.e., along almost every sample path, \eqref{newdiscrepancy} holds for some finite \(\hat{k}_\delta\). 
Finally, from the definition of $\hat{k}_\delta$ in \ref{newdiscrepancy}, we get
 \[\frac{M}{(\hat{k}_\delta + a)} \leq \tau  \|y_i^\delta - y_i^\dagger\| \leq \tau \| y^\delta - y^\dagger \| \to 0 \text{\quad as \quad} \delta \to 0.\]
 Consequently, we must have $\hat{k}_\delta \to \infty$ as $\delta \to 0$. This completes the proof.
\end{proof}
The proof of the following lemma draws inspiration from the methodologies outlined in \cite{Zhang and Jin Q} and \cite{real2024hanke}. In this result, we examine the asymptotic behavior of the stopping index prescribed by Rule~\ref{Hanke Rule} when applied to noisy data satisfying Assumption~\ref{new assumption}. It is noteworthy that the proof of this lemma relies essentially on Theorem~\ref{kdelta hat finite}.
\begin{lemma} \label{secound lemma}
Let the sequence $\{\lambda_k\}$, $\omega$ and $\Omega$ be chosen such that $\ref{condition for first lemma}$ and $\ref{cond 2 for first lemma}$ hold and let Assumption $\ref{common assumptions}$ hold. Additionally, let $\{y^\delta\}$ represent a set of noisy data that fulfill Assumption $\ref{new assumption}$. Furthermore, assume that $k_* := k_*(y^\delta)$ is the stopping index as defined in Rule \ref{Hanke Rule}. Then 
\[ \mathbb{E} \left[\Psi(k_*(y^\delta), y^\delta ) \right] \to 0 \quad \text{as} \quad \delta \to 0.\]
Thus, we have 
\[
\mathbb{E} \left[\| F(u^\delta_{k_*(y^\delta)}) - y^\delta \|\right] \to 0 \quad \text{and} \quad \mathbb{E} \left[ k_*(y^\delta) \| y^\delta - y^\dag \|^2 \right] \to 0 \quad \text{as} \quad \delta \to 0.
\]
\end{lemma}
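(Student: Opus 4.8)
The plan is to reduce both limit statements to the single assertion $\mathbb{E}[\Psi(k_*(y^\delta),y^\delta)]\to 0$, and then prove that assertion by comparing the heuristic minimizer against a \emph{fixed} deterministic index and passing to the exact-data iteration, using the two random stopping indices $k_*$ and $\hat k_\delta$ together. Since $\Psi(k,y^\delta)=(k+a)\|F(u^\delta_k)-y^\delta\|^2$ and $a\ge1$, I first note $\|F(u^\delta_{k_*})-y^\delta\|^2\le\Psi(k_*,y^\delta)$, so by Cauchy--Schwarz $\mathbb{E}[\|F(u^\delta_{k_*})-y^\delta\|]\le(\mathbb{E}[\Psi(k_*,y^\delta)])^{1/2}$; and since $0\le k_*\le k_\infty$ gives $u^\delta_{k_*}\in S(y^\delta)$, Assumption~\ref{new assumption} yields $\|F(u^\delta_{k_*})-y^\delta\|\ge\varkappa\|y^\delta-y^\dagger\|$, whence $\Psi(k_*,y^\delta)\ge k_*\varkappa^2\|y^\delta-y^\dagger\|^2$ and $\mathbb{E}[k_*\|y^\delta-y^\dagger\|^2]\le\varkappa^{-2}\mathbb{E}[\Psi(k_*,y^\delta)]$. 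Thus everything hinges on the main claim.

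For the main claim I fix a deterministic $N\in\mathbb{N}$ and split on the two stopping events. On $\{\hat k_\delta\ge N\}$ we have $N\le\hat k_\delta\le k_\infty$, since Lemma~\ref{first lemma} keeps the iterates in the domain up to $\hat k_\delta$; minimality of $k_*$ over $[0,k_\infty]$ then gives $\Psi(k_*,y^\delta)\le\Psi(N,y^\delta)=(N+a)\|F(u^\delta_N)-y^\delta\|^2$. On $\{\hat k_\delta<N\}$ minimality gives $\Psi(k_*,y^\delta)\le\Psi(\hat k_\delta,y^\delta)$, and the modified discrepancy Rule~\ref{modified discrepancy}, which gives componentwise $\|F_i(u^\delta_{\hat k_\delta})-y_i^\delta\|\le\tau\|y_i^\delta-y_i^\dagger\|$, yields $\Psi(\hat k_\delta,y^\delta)\le(\hat k_\delta+a)\tau^2\|y^\delta-y^\dagger\|^2\le(N+a)\tau^2\delta^2$. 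Taking expectations and bounding the indicators crudely, I obtain, for every fixed $N$, the uniform estimate $\mathbb{E}[\Psi(k_*,y^\delta)]\le(N+a)\,\mathbb{E}[\|F(u^\delta_N)-y^\delta\|^2]+(N+a)\tau^2\delta^2$.

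Now I let $\delta\to0$ with $N$ held fixed. The last term vanishes, while a finite-horizon stability argument shows $\mathbb{E}[\|F(u^\delta_N)-y^\delta\|^2]\to\mathbb{E}[\|F(u_N)-y^\dagger\|^2]$, where $u_N$ is the exact-data iterate driven by the \emph{same} random indices $i_0,\dots,i_{N-1}$: for each of the finitely many index realizations the update map is continuous in $(u,y^\delta)$ and all iterates lie in the bounded ball $B_{\rho+c(\rho)}(u_0)$, so expectation (a finite average) passes to the limit. Hence $\limsup_{\delta\to0}\mathbb{E}[\Psi(k_*,y^\delta)]\le(N+a)\,\mathbb{E}[\|F(u_N)-y^\dagger\|^2]$ for every $N$. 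Specializing the energy recursion \eqref{solved in Lemma 1} to the exact-data case, where the residual coefficient improves to $D'=2\omega(1-\lambda_{\text{max}})(1-\eta)-\Omega^2L^2-\kappa^2>D>0$, and summing (using $\sum_k\lambda_k\le\lambda<\infty$) gives $\sum_{N}\mathbb{E}[\|F(u_N)-y^\dagger\|^2]<\infty$; consequently $\liminf_N (N+a)\mathbb{E}[\|F(u_N)-y^\dagger\|^2]=0$. Choosing $N$ along a subsequence realizing this $\liminf$ forces $\limsup_{\delta\to0}\mathbb{E}[\Psi(k_*,y^\delta)]=0$, which is the claim.

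The decisive and most delicate point is the interaction of the two random indices. The naive bound $\Psi(k_*,y^\delta)\le\Psi(\hat k_\delta,y^\delta)\le(\hat k_\delta+a)\tau^2\|y^\delta-y^\dagger\|^2$ only yields, via \eqref{l0}, that $\mathbb{E}[(\hat k_\delta+a)\|y^\delta-y^\dagger\|^2]$ is \emph{bounded} but not vanishing, so comparing against $\hat k_\delta$ alone is insufficient; the essential trick is to compare against a fixed $N$ and to use $\hat k_\delta\to\infty$ from Theorem~\ref{kdelta hat finite} to render $\{\hat k_\delta<N\}$ harmless. The two genuinely technical ingredients I expect to be the main obstacles are the mean-square stability of $u^\delta_N$ as $\delta\to0$ (requiring care because $N$ is fixed but the path is random) and the exact-data summability $\sum_N\mathbb{E}[\|F(u_N)-y^\dagger\|^2]<\infty$, the latter resting on an exact-data analogue of Lemma~\ref{first lemma} that keeps $u_N\in B_{c(\rho)}(\hat u)$ for all $N$.
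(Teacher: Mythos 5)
Your proof is correct, but it takes a genuinely different route from the paper's. The paper tests the minimality of $\Psi(k_*,y^\delta)$ against \emph{every} index $k<\hat k_\delta$ and sums: combining the cumulative residual bound \ref{l0} (established in Theorem~\ref{kdelta hat finite}) with $\sum_{k=0}^{\hat k_\delta-1}(k+a)^{-1}\ge\log\tfrac{\hat k_\delta+a}{a}$ gives $\bigl(\log\tfrac{\hat k_\delta+a}{a}\bigr)\,\mathbb{E}[\Psi(k_*(y^\delta),y^\delta)]\le PD^{-1}C_1$, and then $\hat k_\delta\to\infty$ as $\delta\to 0$ finishes the argument. You instead test minimality against a single fixed deterministic $N$, send $\delta\to0$ first (via a finite-horizon stability result, which is precisely the paper's Lemma~\ref{first lemma for noisy} --- proved later in the paper but logically independent of the present lemma, so there is no circularity), and then send $N\to\infty$ along a subsequence, using the exact-data summability $\sum_{N}\mathbb{E}[\|F(u_N)-y^\dagger\|^2]<\infty$ (the exact-data half of Corollary~\ref{for sum}; your direct derivation from \ref{solved in Lemma 1} with the constant $D'=D_2>D>0$ is valid and, unlike the paper's statement of that corollary, does not require the additional condition \ref{ass of lemma 3}) together with the elementary fact that a summable nonnegative sequence satisfies $\liminf_N N a_N=0$. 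What each approach buys: the paper's argument is self-contained at that stage of the development and is quantitative, yielding the rate $\mathbb{E}[\Psi(k_*,y^\delta)]=O\bigl(1/\log(1/\delta)\bigr)$, whereas yours is purely qualitative; on the other hand, your argument cleanly separates the two mechanisms (stability for fixed horizon, residual decay for exact data) and avoids the formal awkwardness of the paper's display \ref{1*}, in which the random quantity $\hat k_\delta$ sits outside an expectation. Both proofs reduce the two secondary assertions to the main limit in exactly the same way, via $a\ge 1$ and Assumption~\ref{new assumption}. One step you should make explicit: the ``crude indicator bound'' as literally written presupposes that $u_N^\delta$ is defined, and its residual integrable, on the event $\{\hat k_\delta<N\}$, where Lemma~\ref{first lemma} gives no control. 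The clean fix is the one you already gesture at: Rule~\ref{modified discrepancy} forces $\hat k_\delta\ge M/(\tau\delta)-a$ pathwise, so for each fixed $N$ and all sufficiently small $\delta$ the event $\{\hat k_\delta<N\}$ is empty; the estimate then reads simply $\mathbb{E}[\Psi(k_*,y^\delta)]\le(N+a)\,\mathbb{E}[\|F(u_N^\delta)-y^\delta\|^2]$, and this also guarantees $N\le k_\infty$, which is what legitimizes comparing $\Psi(k_*,y^\delta)$ with $\Psi(N,y^\delta)$ in the first place.
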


\begin{proof}
By employing the condition \ref{cond 2 for first lemma} and the fact that  $\sum_{k=0}^{\infty} \frac{1}{(k + a)^2} \leq \sum_{k=1}^{\infty} \frac{1}{k^2} = \frac{\pi^2}{6}$ for $a \geq 1$ in  \ref{l0}, we arrive at
 \begin{equation}\label{II eqn of lemma 2}
\sum_{k=0}^{\hat{k}_\delta -1} \mathbb{E}\left[ \sum_{i=0}^{P-1} \|y_i^\delta - F_i(u^\delta_k)\|^2 \right]  \leq  PD^{-1}C_1,
\end{equation}
\ where $C_1= \left(\rho^2   + \left(\rho^2(1 + \kappa^{-2}\Omega^2 L^2) + 2\rho c(\rho)\right)\lambda  + \frac{M\Omega\pi^2 (1+ \eta)}{6\tau}\right)$.
By minimality  of $\Psi(k_*, y^\delta)$, it follows from Rule \ref{Hanke Rule} that
\[ \sum_{i=0}^{P-1}\|F_i(u_k^\delta) - y_i^\delta\|^2 = \frac{\Psi(k, y^\delta)}{k+a} \geq \frac{\Psi(k_{*}(y^\delta), y^\delta)}{k+a}. \] 
This and \ref{II eqn of lemma 2} further yields
\begin{equation}\label{1*}
   \mathbb{E}\left[  \Psi (k_*(y^\delta), y^\delta) \right]\sum^{\hat{k}_\delta - 1}_{k=0} \frac{1}{k+a} \leq \sum_{k=0}^{\hat{k}_\delta -1} \mathbb{E}\left[ \sum_{i=0}^{P-1} \|y_i^\delta - F_i(u^\delta_k)\|^2 \right]  \leq  PD^{-1}C_1.\end{equation}
Note that \[
\sum_{k=0}^{\hat{k}_\delta -1} \frac{1}{k+a} \geq  \sum_{k=0}^{\hat{k}_\delta -1}\int_{k}^{k+1}\frac{1}{t+a}dt = \int_{0}^{\hat{k}_\delta}\frac{1}{t+a}dt = \log \frac{\hat{k}_\delta + a}{a}.
\]
Plugging the last estimate in \ref{1*} yields
\[\bigg(\log\frac{\hat{k}_\delta +a}{a}\bigg) \mathbb{E} \left [ \Psi(k_* (y^\delta), y^\delta) \right ] \leq P D^{-1} C_1 . \]
Since $\hat{k}_\delta \to \infty$ as $\delta \to 0$, we must have $ \mathbb{E}\left[\Psi(k_*(y^\delta), y^\delta)\right] \to 0$ as $\delta \to 0$. Further, we note that
\[
 a \sum_{i=0}^{P-1}\| F_i(u^\delta_{k_*(y^\delta)}) - y_i^\delta \|^2 =a\| F(u^\delta_{k_*(y^\delta)}) - y^\delta \|^2 \leq \Psi(k_*(y^\delta), y^\delta).\]
Also, Assumption \ref{new assumption} implies that
\[
(k_*(y^\delta) +a) \varkappa^2 \| y^\delta - y^\dagger \|^2 \leq  \Psi(k_*(y^\delta), y^\delta).
\]
Hence, we conclude that 
$ \mathbb{E}\left[\|F (u_{k_{*}(y^\delta)}^\delta) - y^\delta\|\right] \to 0$ and $\mathbb{E}\left[ k_*(y^\delta) \| y^\delta - y^\dagger \|^2 \right] \to 0$ as $\delta \to 0,$ since $\mathbb{E}\left[ \Psi(k_*(y^\delta), y^\delta)\right] \to 0$ as $\delta \to 0$. Thus, result.
\end{proof}

The next result demonstrates that the iterates $u_k^\delta$ generated by Algorithm~\ref{alg:IRSGD Algorithm} remains in a ball around $u_0$ 
 for any $0 \leq k < k_*$, where $k_*$ is defined via Rule \ref{Hanke Rule} (note that Lemma \ref{first lemma} shows the same for $0 \leq k < \hat{k_{\delta}}$).
\begin{lemma}\label{3.3}
Let all the conditions of Lemma $\ref{secound lemma}$ be satisfied. Additionally, assume that 
\begin{equation}\label{ass of lemma 3}
 D_1 :=   \kappa^2 - \Omega(1 - \lambda_{\text{max}})(1 - 3\eta) + \omega^2 L^2 > 0.                                                         
\end{equation}
Then, if $u^\delta_k \in B_{c(\rho)}(\hat{u})$ for any $0 \leq k < k_*$, it follows that $u^\delta_{k+1} \in B_{c(\rho)}(\hat{u})\subset B_{\rho +c(\rho)}(u_0)$ almost surely, provided $\delta$ is sufficiently small.
\end{lemma}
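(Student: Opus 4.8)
The plan is to argue by induction on $k$, following the structure of the proof of Lemma~\ref{first lemma} but replacing the role of the modified discrepancy principle by the small–noise regime. The base case $k=0$ is immediate from Assumption~\ref{common assumptions}(iii). For the inductive step, assuming $u_k^\delta\in B_{c(\rho)}(\hat u)$, I would start from the one–step expansion \eqref{3.9} and the resulting estimate \eqref{solved in Lemma 1}; these were obtained purely algebraically, without reference to any stopping rule, and hence remain valid verbatim. Exactly as in Lemma~\ref{first lemma}, the damping contribution $(1-\lambda_k)^2\|u_k^\delta-\hat u\|^2+\lambda_k^2(1+\kappa^{-2}\omega_k^2L^2)\rho^2+2\lambda_k(1-\lambda_k)\rho\,c(\rho)$ is bounded by $\tfrac{c(\rho)^2}{1+\varrho}$ by the very choice of $c(\rho)$ in \eqref{c rho)}, leaving a slack of $\tfrac{\varrho}{1+\varrho}c(\rho)^2$ to absorb the residual and noise terms.

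The point where the two proofs diverge is the cross term in \eqref{solved in Lemma 1}: for $k<k_*$ the discrepancy inequality \eqref{disc used in lemma 1} is unavailable. In its place I would bound the cross term by the symmetric Young inequality $2\|y_{i_k}^\delta-F_{i_k}(u_k^\delta)\|\,\|y_{i_k}^\delta-y_{i_k}^\dagger\|\le\|y_{i_k}^\delta-F_{i_k}(u_k^\delta)\|^2+\|y_{i_k}^\delta-y_{i_k}^\dagger\|^2$. Inserting this into the bracket $2\omega_k(1-\lambda_k)\big[(1+\eta)\|y_{i_k}^\delta-y_{i_k}^\dagger\|-(1-\eta)\|y_{i_k}^\delta-F_{i_k}(u_k^\delta)\|\big]$ merges the factors $(1-\eta)$ and $(1+\eta)$ into a single $(1-3\eta)$: the residual $\|y_{i_k}^\delta-F_{i_k}(u_k^\delta)\|^2$ then carries the coefficient $\omega_k^2L^2+\kappa^2-\omega_k(1-\lambda_k)(1-3\eta)$, while the surviving noise part is $\omega_k(1-\lambda_k)(1+\eta)\|y_{i_k}^\delta-y_{i_k}^\dagger\|^2\le\Omega(1+\eta)\delta^2$. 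This is precisely where the hypothesis $D_1>0$ from \eqref{ass of lemma 3} enters to fix the sign and size of the residual coefficient, and where the noise term is disposed of by letting $\delta\to0$.

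To control the residual contribution I would exploit the auxiliary index $\hat k_\delta$ of Rule~\ref{modified discrepancy} together with Theorem~\ref{kdelta hat finite}, which guarantees $\hat k_\delta<\infty$ almost surely and $\hat k_\delta\to\infty$ as $\delta\to0$. On the probability–one event where $\hat k_\delta$ is finite I would split the range $0\le k<k_*$: for $0\le k<\hat k_\delta$ the conclusion is already furnished by Lemma~\ref{first lemma} (whose hypotheses hold under the standing conditions \eqref{condition for first lemma} and \eqref{cond 2 for first lemma} of Lemma~\ref{secound lemma}), while for $\hat k_\delta\le k<k_*$ the modified discrepancy criterion \eqref{newdiscrepancy} has already been met, so that $\|F_{i_k}(u_k^\delta)-y_{i_k}^\delta\|$ is of the order of the noise $\|y_{i_k}^\delta-y_{i_k}^\dagger\|\le\delta$; the residual–squared term is then itself $O(\delta^2)$, and together with the $O(\delta^2)$ noise term it is absorbed by the slack $\tfrac{\varrho}{1+\varrho}c(\rho)^2$ once $\delta$ is small enough. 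In either regime the bound $\|u_{k+1}^\delta-\hat u\|\le c(\rho)$ follows, and the inclusion $B_{c(\rho)}(\hat u)\subset B_{\rho+c(\rho)}(u_0)$ is the same triangle–inequality step (using $\|u_0-\hat u\|\le\rho$) as at the end of Lemma~\ref{first lemma}.

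The main obstacle I anticipate is the residual control in the second regime $\hat k_\delta\le k<k_*$. The criterion \eqref{newdiscrepancy} only pins down the residual at the index $\hat k_\delta$ itself, whereas the damping term $-\lambda_k(u_k^\delta-u_0)$ can displace the iterate by an amount that is not of order $\delta$, so that smallness of the residual need not propagate forward automatically; reconciling this—either by establishing that $k_*\le\hat k_\delta$ (which would render the second regime vacuous and reduce everything to Lemma~\ref{first lemma}) or by quantifying how far the residual can grow beyond $\hat k_\delta$ before $\Psi$ begins to increase and $k_*$ is reached—together with verifying that $D_1>0$ is exactly the condition that closes the estimate, is the technically delicate step. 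The almost–sure qualifier is then inherited from the almost–sure finiteness of $\hat k_\delta$ in Theorem~\ref{kdelta hat finite}.
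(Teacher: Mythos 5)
Your proposal follows the paper's proof exactly up to the midpoint: the one-step estimate \ref{solved in Lemma 1}, the replacement of the discrepancy bound \ref{disc used in lemma 1} by the symmetric Young inequality on the cross term, the resulting merge of $(1-\eta)$ and $(1+\eta)$ into $(1-3\eta)$, and the role of $D_1$ in \ref{ass of lemma 3} as fixing the sign of the coefficient of $\|F_{i_k}(u_k^\delta)-y_{i_k}^\delta\|^2$ are all precisely the paper's steps. The genuine gap is in what you do next. You route the argument through the auxiliary index $\hat k_\delta$ of Rule \ref{modified discrepancy}, splitting $0\le k<k_*$ into the regime $k<\hat k_\delta$ (handled by Lemma \ref{first lemma}) and the regime $\hat k_\delta\le k<k_*$, where you need the residual to be of order $\delta$ at \emph{every} such $k$. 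As you concede in your final paragraph, Rule \ref{modified discrepancy} controls the residual only at the single index $\hat k_\delta$, nothing forces this smallness to propagate forward, and neither of the two repairs you sketch (proving $k_*\le\hat k_\delta$, or bounding the residual growth past $\hat k_\delta$) is carried out. So the inductive step is not closed whenever $\hat k_\delta\le k<k_*$ occurs, and the proposal is incomplete at exactly the point you flag as delicate.

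The paper avoids this entirely: $\hat k_\delta$ never appears in its proof of this lemma (it enters only indirectly, inside the proofs of Theorem \ref{kdelta hat finite} and Lemma \ref{secound lemma}). Once $D_1>0$ makes the residual coefficient negative there is no ``residual contribution to control'' at all --- the paper simply drops that term. Concretely, it takes expectations in the post-Young estimate to get
\[
\mathbb{E}[\|u_{k+1}^\delta-\hat u\|^2]-\mathbb{E}[\|u_{k}^\delta-\hat u\|^2]
\le C\lambda_k+\Omega(1+\eta)\,\mathbb{E}[\|y^\delta-y^\dagger\|^2]
- D_1\,\mathbb{E}[\|F(u_k^\delta)-y^\delta\|^2],
\qquad C:=\rho^2(1+\kappa^{-2}\Omega^2L^2)+2\rho c(\rho),
\]
telescopes over $l=0,\dots,k$, discards the $D_1$-sum, and is left with $\rho^2+C\lambda+\Omega(1+\eta)(k+1)\mathbb{E}[\|y^\delta-y^\dagger\|^2]$ (this is \ref{sum for corollory}). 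The quantity that actually needs taming is thus the \emph{accumulated noise} over up to $k_*$ steps, and this is where the hypotheses ``conditions of Lemma \ref{secound lemma}'' and ``$\delta$ sufficiently small'' do their work: since $k<k_*$, the accumulated noise is bounded by $\Omega(1+\eta)\mathbb{E}[k_*(y^\delta)\|y^\delta-y^\dagger\|^2]$, which tends to $0$ as $\delta\to0$ by the second conclusion of Lemma \ref{secound lemma}, yielding the constant $\xi$ in \ref{zeta condn} and hence $\mathbb{E}[\|u_{k+1}^\delta-\hat u\|^2]\le c(\rho)^2$, followed by Jensen's inequality. Your proposal never invokes this conclusion of Lemma \ref{secound lemma}, which is the crux of the paper's argument and the reason those hypotheses appear in the statement. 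Incidentally, your own second paragraph already contains the seed of a valid alternative: drop the residual term per step using the negative coefficient, and absorb the per-step noise $\Omega(1+\eta)\delta^2$ into the slack $\tfrac{\varrho}{1+\varrho}c(\rho)^2$ left by the defining property of $c(\rho)$; had you committed to that, no appeal to Rule \ref{modified discrepancy} would be needed. But as written, the proposal instead stakes the second regime on an unproved pointwise residual bound, and that step fails.
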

\begin{proof}
For $k=0$, result is trivial. 
 By assuming $u_k^\delta \in B_{c(\rho)}(\hat{u})$  for any $0 \leq k < k^*,$ we show that $u_{k+1}^\delta \in B_{c(\rho)}(\hat{u})$ almost surely. From \ref{solved in Lemma 1} and the inequalities \(\|u_{k}^\delta - \hat{u}\| \leq c(\rho)\) and  \(\|u_0 - \hat{u}\| \leq \rho\),  we get
    \begin{align}
        \|u_{k+1}^\delta\ - \hat{u}\|^2 &-  \|u_{k}^\delta\ - \hat{u}\|^2\nonumber  \\
        & \leq \rho^2 (1 + \kappa^{-2}\Omega^2 L^2)\lambda_k^2  +  2\omega_k(1-\lambda_k)(1+\eta)\|F_{i_k}(u_k^\delta) - y_{i_k}^\delta\|\|y_{i_k}^\dagger -y_{i_k}^\delta\| \\ \nonumber 
        & \quad + 2 \lambda_k(1-\lambda_k)\rho c(\rho)  - (2\omega_k(1-\lambda_k)(1 - \eta) - \omega_k^2 L^2 - \kappa^2)\|F_{i_k}(u_k^\delta) -y_{i_k}^\delta\|^2 \\ \nonumber
        &\leq \rho^2 (1 + \kappa^{-2}\Omega^2 L^2)\lambda_k^2 + \omega_k(1-\lambda_k)(1+\eta)\|y_{i_k}^\dagger - y_{i_k}^\delta\|^2 + 2 \lambda_k(1-\lambda_k)\rho c(\rho) \\ \nonumber 
        & \quad + ( \omega_k(1 -\lambda_k)(1 + \eta)- \left(2\omega_k(1-\lambda_k)(1 - \eta) - \omega_k^2 L^2- \kappa^2)\right)\|F_{i_k}(u_k^\delta) -y_{i_k}^\delta\|^2.
    \end{align}
   Using  the fact that $1-\lambda_k \leq 1$ and $ \lambda_k^2 \leq \lambda_k$ for $\lambda_k \in [0, \frac{1}{2})$, we get     
    \begin{equation}
       \begin{aligned}
            \|u_{k+1}^\delta\ - \hat{u}\|^2  &-  \|u_{k}^\delta\ - \hat{u}\|^2 \leq  (\rho^2(1 + \kappa^{-2}\Omega^2 L^2) + 2\rho c(\rho))\lambda_k  + \omega_k(1-\lambda_k)(1+\eta)\|y_{i_k}^\dagger - y_{i_k}^\delta\|^2 \\ 
        & + ( \omega_k(1 -\lambda_k)(1 + \eta)- \left(2\omega_k(1-\lambda_k)(1 - \eta) - \omega_k^2 L^2- \kappa^2)\right)\|F_{i_k}(u_k^\delta) -y_{i_k}^\delta\|^2.
       \end{aligned} 
    \end{equation} 
By taking the total conditional and utilizing $1-\lambda_k \leq 1$, we further reach at
\begin{align}
       \mathbb{E}[ \|u_{k+1}^\delta - \hat{u}\|^2] -  \mathbb{E}[\|u_{k}^\delta - \hat{u}\|^2]\nonumber &\leq  (\rho^2(1 + \kappa^{-2}\Omega^2 L^2) + 2\rho c(\rho))\lambda_k +\Omega(1+\eta)\mathbb{E}[\|y^\dagger - y^\delta\|^2] \\ \nonumber 
        & \hspace{10mm} - D_1 \mathbb{E}[\|F(u_k^\delta) -y^\delta\|^2].
    \end{align}
   The last inequality derives that
    \begin{equation}\label{sum for corollory}
    \begin{split}
         \mathbb{E}[\|u_{k+1}^\delta - \hat{u}\|^2] + D_1 \sum_{l=0}^{k} \mathbb{E}[\|F(u_l^\delta) - y^\delta\|^2] &\leq  \rho^2 +  ( \rho^2(1 + \kappa^{-2}\Omega^2 L^2) + 2\rho c(\rho))\sum_{l=0}^{k}\lambda_l \\
         & \hspace{10mm} +\Omega(1+\eta)(k+1) \mathbb{E}[\|y^\delta - y^\dagger\|^2]
    \end{split} 
    \end{equation}
    and hence by using \ref{cond 2 for first lemma} and the fact that $k< k_*$, where $k_*$ is a finite integer, we get
    \[ \mathbb{E}[\|u_{k+1}^\delta - \hat{u}\|^2] \leq  \rho^2 +   (\rho^2(1 + \kappa^{-2}\Omega^2 L^2) + 2\rho c(\rho))\lambda + \Omega(1+\eta) \mathbb{E}[k_{*} \|y^\delta - y^\dagger\|^2].\]
    From Lemma \ref{secound lemma},  one can note that for sufficiently small $\delta$, there exist a constant $\xi$ such that
    \begin{equation}\label{zeta condn}
        \Omega(1+\eta)\mathbb{E}[ k_* (y^\delta) \|y^\delta - y^\dagger\|^2] \leq \xi,\ 
\text{with}  \  \rho^2 +  \ (\rho^2(1 + \kappa^{-2}\Omega^2 L^2) + 2\rho c(\rho))\lambda  + \xi \leq c(\rho)^2.\end{equation}
    Hence, as a result, we get
\( \mathbb{E} [\|u_{k+1}^\delta\ - \hat{u}\|^2] \leq c(\rho)^2.\)
 By incorporating Jensen's inequality with $f(x) = x^2$, we conclude that
\[\mathbb{E}[\|u^\delta_{k+1} - \hat{u}\|] \leq \mathbb{E}[\|u^\delta_{k+1} - \hat{u}\|^2]^{1/2} \leq c(\rho).\]
 This concludes the proof.
\end{proof}
\section{Convergence Analysis}\label{sec-convo}
In this section, we prove that \(u^\delta_{k_*}\) converges almost surely to a solution of \ref{ststart} as \(\delta \to 0\). We first analyze the exact-data analogue of Algorithm~\ref{alg:IRSGD Algorithm} and establish its almost-sure convergence. Next, we show a stability property along each sample path that connects the noisy- and exact-data cases. These results together yield the main convergence theorem. The exact-data version of Algorithm~\ref{alg:IRSGD Algorithm} is given below.

\begin{algorithm}
\caption{Exact-data IRSGD }
\label{alg:Exact-data IRSGD}
\begin{algorithmic}[1]
\STATE{\textbf{Given:} $F_i, y_i^\dagger$ for all $i \in \{0, 1, \ldots, P-1\}$ }
\STATE{\textbf{Initialize:} $u_0 = u^{(0)}$}
\FOR{$k \geq 0$}
    \STATE{Choose $\omega_k \in [\omega, \Omega]$ and the weight parameter $\lambda_k$ such that
    \[
    0 \leq \lambda_k \leq \lambda_{\text{max}} < \frac{1}{2}.
    \]}
    \STATE{Randomly pick an index $i_k$ from $\{0, 1, \ldots, P-1\}$ uniformly.}
    \STATE{Update
    \[
    u_{k+1} = u_k - \omega_k F'_{i_k}(u_k)^* (F_{i_k}(u_k) - y_{i_k}^\dagger) - \lambda_k (u_k - u^{(0)}),
    \]
    where $F'_{i_k}$ is the Fréchet derivative of $F_{i_k}$, and $u^{(0)}$ is the initial guess.}
\ENDFOR
\end{algorithmic}
\end{algorithm}

The result below identifies an event \(\daleth\) of probability one, consisting of sample paths that will serve as the basis for the convergence analysis of the IRSGD method in the exact-data setting presented in Algorithm~\ref{alg:Exact-data IRSGD}. 
Specifically, we define
\begin{equation}\label{event daleth}
    \daleth = \left\{ \sum_{l=0}^{\infty} \sum_{i=0}^{P-1} \|F_i(u_l) - y_i^\dagger\|^2 < \infty \right\},
\end{equation}
and show that this event occurs almost surely, i.e., \(\mathbb{P}(\daleth) = 1\).

\begin{cor}\label{for sum} Let the assumptions of Lemma $\ref{first lemma}$ be satisfied. Additionally, assume that IRSGD method $\ref{SIRLI 2}$ is stopped according to Rule $\ref{Hanke Rule}$, then for sufficiently small $\delta$, we have

\[\sum_{l=0}^{k_* -1} \mathbb{E}[\|F(u_l^\delta) - y^\delta\|^2] \leq D_{1}^{-1} \left( (\rho^2(1 + \kappa^{-2}\Omega^2 L^2) + 2\rho c(\rho))\sum_{l=0}^{k_* -1}\lambda_k +\rho^2 + \xi\right) < \infty,\]
where $D_1$ is defined  in $\ref{ass of lemma 3}$. In particular, for the case $y^\delta = y^\dagger$, we have 
\[ \sum_{l=0}^{\infty} \mathbb{E}[\|F(u_l) - y^\dagger\|^2] \leq D_{1}^{-1}  \left( \rho^2 +  (\rho^2(1 + \kappa^{-2}\Omega^2 L^2) + 2\rho c(\rho))\sum_{l=0}^{\infty}\lambda_k \right) < \infty.\]
It follows from the above inequality that the event \(\daleth\) occurs with probability one, i.e., \(\mathbb{P}(\daleth) = 1\).
\end{cor}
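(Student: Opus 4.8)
The plan is to obtain all three assertions directly from the master inequality \eqref{sum for corollory} established inside the proof of Lemma~\ref{3.3}: for every running index $k$ (as long as the iterates stay in $B_{c(\rho)}(\hat u)$),
\[
\mathbb{E}\big[\|u_{k+1}^\delta - \hat{u}\|^2\big] + D_1 \sum_{l=0}^{k} \mathbb{E}\big[\|F(u_l^\delta) - y^\delta\|^2\big] \leq \rho^2 + C_\lambda\sum_{l=0}^{k}\lambda_l + \Omega(1+\eta)(k+1)\,\mathbb{E}\big[\|y^\delta - y^\dagger\|^2\big],
\]
with $C_\lambda := \rho^2(1+\kappa^{-2}\Omega^2 L^2) + 2\rho c(\rho)$ and $D_1>0$ from \eqref{ass of lemma 3}. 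The corollary is essentially a repackaging of this estimate, so no new analytic bound is needed; the work lies in taking the right limits and then drawing the probabilistic conclusion.

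For the noisy bound I would evaluate the displayed inequality at the final index $k = k_*-1$, discard the nonnegative term $\mathbb{E}[\|u_{k_*}^\delta - \hat u\|^2]$, and absorb the noise contribution through \eqref{zeta condn}, which supplies $\Omega(1+\eta)\,\mathbb{E}[k_*(y^\delta)\|y^\delta - y^\dagger\|^2] \le \xi$ for $\delta$ small. Dividing by $D_1$ then gives the stated finite bound. Since $\|y^\delta-y^\dagger\|^2$ is deterministic while $k_*$ is random, the sum with random upper limit $k_*-1$ must be read carefully; I would work on the almost-sure event $\{k_*<\infty\}$ (guaranteed by Lemma~\ref{secound lemma}) and use the nonnegativity of each residual, so that the random-index substitution is legitimate and the surviving noise term is exactly the one controlled by \eqref{zeta condn}.

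For the exact-data bound I set $y^\delta = y^\dagger$, whereupon the last term on the right vanishes identically. Here there is no stopping rule---Algorithm~\ref{alg:Exact-data IRSGD} runs for all $k\ge0$---so I must first check that the iterates $u_k$ remain in $B_{c(\rho)}(\hat u)$ for every $k$. This is the noise-free specialization of the induction in Lemma~\ref{3.3}: with $\xi=0$ the admissibility condition \eqref{zeta condn} collapses to $\rho^2 + C_\lambda\lambda \le c(\rho)^2$, so the ball is never left. Hence the displayed inequality holds for all $k$, and letting $k\to\infty$, using $\sum_k\lambda_k\le\lambda<\infty$ from \eqref{cond 2 for first lemma} together with monotone convergence, yields $\sum_{l=0}^\infty \mathbb{E}[\|F(u_l)-y^\dagger\|^2]\le D_1^{-1}\big(\rho^2 + C_\lambda\sum_l\lambda_l\big)<\infty$.

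Finally, for $\mathbb{P}(\daleth)=1$ I use that $\|F(u_l)-y^\dagger\|^2 = \sum_{i=0}^{P-1}\|F_i(u_l)-y_i^\dagger\|^2$ by the definition of the product norm on $Y^P$. All summands being nonnegative, Tonelli's theorem permits interchanging expectation with the double sum, so the exact-data bound gives $\mathbb{E}\big[\sum_{l=0}^\infty\sum_{i=0}^{P-1}\|F_i(u_l)-y_i^\dagger\|^2\big]=\sum_{l=0}^\infty\mathbb{E}[\|F(u_l)-y^\dagger\|^2]<\infty$. A nonnegative random variable with finite expectation is finite almost surely, so the series defining $\daleth$ in \eqref{event daleth} converges on a set of probability one, i.e.\ $\mathbb{P}(\daleth)=1$. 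I expect the only genuine obstacles to be these two bookkeeping steps---legitimizing the random upper limit $k_*$ in the noisy estimate, and confirming that the iterates never leave $B_{c(\rho)}(\hat u)$ in the unstopped exact run so that \eqref{sum for corollory} may be pushed to $k=\infty$---since the substantive estimate is already carried out in Lemma~\ref{3.3}.
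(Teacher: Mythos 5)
Your proposal follows essentially the same route as the paper: both derive the corollary directly from inequality \ref{sum for corollory} of Lemma \ref{3.3}, evaluated at $k = k_*-1$ with the nonnegative term $\mathbb{E}[\|u_{k_*}^\delta-\hat u\|^2]$ discarded, the noise contribution absorbed via \ref{zeta condn}, division by $D_1$, and the exact-data case obtained by setting $y^\delta = y^\dagger$ so the noise term vanishes. Your write-up is in fact more careful than the paper's own proof, which dismisses the exact-data case with ``handled similarly'' and leaves implicit both the legitimacy of the random upper limit $k_*$ and the Tonelli/finite-expectation argument yielding $\mathbb{P}(\daleth)=1$ that you spell out.
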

\begin{proof}
    
   From \ref{sum for corollory}, we get 
    \begin{equation*}
        D_1 \sum_{l=0}^{k_* -1} \mathbb{E}[\|F(u_l^\delta) - y^\delta\|^2] \leq \rho^2 +   (\rho^2(1 + \kappa^{-2}\Omega^2 L^2) + 2\rho c(\rho))\sum_{l=0}^{k_* -1} \lambda_l  + \Omega(1+ \eta)  \mathbb{E}[k_*\|y^\delta - y^\dagger\|^2].
    \end{equation*}
 Using \ref{zeta condn}, we note that
    \begin{equation*}\label{sum for corollory in 1(a)}
         \sum_{l=0}^{k_* -1} \mathbb{E}[\|F(u_l^\delta) - y^\delta\|^2] \leq D_{1}^{-1} \left( (\rho^2(1 + \kappa^{-2}\Omega^2 L^2) + 2\rho c(\rho))\sum_{l=0}^{k_* -1}\lambda_k +\rho^2 + \xi\right) < \infty.
    \end{equation*}
    Thus, result. The case $y^{\delta}=y^\dagger$ can be handled similarly.
\end{proof}
\begin{rema}
   If \( \lambda_{\text{max}} = 0 \), the parameter \( \kappa \) in conditions \ref{condition for first lemma} and \ref{ass of lemma 3} can be set to zero. This adjustment mirrors the conditions required to establish the same results for SGD method \ref{SGD}.
\end{rema}

\subsection{Convergence for exact-data case}\label{secfornonnoisy}
In this subsection, we establish the convergence of Algorithm~\ref{alg:Exact-data IRSGD}. Let us assume that
      \begin{equation}\label{condn for lemma 4}
      D_2 := (2\omega(1 - \lambda_{\text{max}})(1-\eta) - \Omega^2 L^2 - \kappa^2) > 0.
      \end{equation}

      \begin{lemma}\label{uk convergent}
      Suppose all the conditions of Lemma $\ref{first lemma}$ are satisfied. Additionally, assume that \eqref{condn for lemma 4} holds.
    Then, the sequence of iterates $\{u_k\}_{k\geq 1}$ produced by Algorithm~\ref{alg:Exact-data IRSGD} is a Cauchy sequence almost surely.

\end{lemma}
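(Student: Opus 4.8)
The plan is to argue pathwise on the probability-one event $\daleth$ from \eqref{event daleth}; indeed, Corollary~\ref{for sum} gives $\sum_{l}\mathbb{E}[\|F(u_l)-y^\dagger\|^2]<\infty$, so by Tonelli's theorem $\sum_{l}\|F(u_l)-y^\dagger\|^2<\infty$ almost surely, i.e. $\mathbb{P}(\daleth)=1$. Fix a solution $\hat u\in B_\rho(u_0)$ from Assumption~\ref{common assumptions}\,(iii). First I would note that in the exact-data setting the data-mismatch terms $y_{i_k}^\dagger-y_{i_k}^\delta$ vanish, so \eqref{solved in Lemma 1} collapses (using $(1-\lambda_k)^2\le1$, $\lambda_k^2\le\lambda_k$, $\omega_k\le\Omega$, $u_k\in B_{c(\rho)}(\hat u)$, and the lower bound $2\omega_k(1-\lambda_k)(1-\eta)-\omega_k^2L^2-\kappa^2\ge D_2>0$ from \eqref{condn for lemma 4}) to the pathwise recursion
\[
\|u_{k+1}-\hat u\|^2\le \|u_k-\hat u\|^2+C\lambda_k-D_2\,\|F_{i_k}(u_k)-y_{i_k}^\dagger\|^2,
\]
with $C:=\rho^2(1+\kappa^{-2}\Omega^2L^2)+2\rho\,c(\rho)$, where the confinement $u_k\in B_{c(\rho)}(\hat u)$ follows from the induction of Lemma~\ref{first lemma} applied with $y^\delta=y^\dagger$. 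Dropping the nonpositive residual term gives $\|u_{k+1}-\hat u\|^2\le\|u_k-\hat u\|^2+C\lambda_k$; since $\sum_k\lambda_k<\infty$ by \eqref{cond 2 for first lemma}, the sequence $k\mapsto\|u_k-\hat u\|^2+C\sum_{j\ge k}\lambda_j$ is nonnegative and nonincreasing, so $\|u_k-\hat u\|^2$ converges along every sample path to a common limit.

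For the Cauchy property, fix $m<n$ and choose $l\in\{m,\dots,n\}$ minimizing $\|F(u_j)-y^\dagger\|$ over $m\le j\le n$, so that $\|u_n-u_m\|\le\|u_n-u_l\|+\|u_l-u_m\|$. I would estimate both differences through the elementary identity, keeping $u_l$ as the inner-product base in each case:
\[
\|u_n-u_l\|^2=\|u_n-\hat u\|^2-\|u_l-\hat u\|^2+2\langle u_l-u_n,\,u_l-\hat u\rangle,
\]
and the analogous expansion of $\|u_l-u_m\|^2$ with the same factor $u_l-\hat u$; this uniform choice of base point is precisely why $l$ is taken as the residual minimizer. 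Telescoping $u_l-u_n=\sum_{j=l}^{n-1}(u_j-u_{j+1})$ with $u_j-u_{j+1}=\omega_jF'_{i_j}(u_j)^*(F_{i_j}(u_j)-y_{i_j}^\dagger)+\lambda_j(u_j-u_0)$, the damping contributions are bounded by $(\rho+c(\rho))\,c(\rho)\sum_{j\ge m}\lambda_j$, a tail of the convergent series $\sum_j\lambda_j$, while the squared-norm differences tend to $0$ by the convergence of $\|u_k-\hat u\|^2$ established above.

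The crux is the gradient contribution $\sum_j\omega_j\langle F_{i_j}(u_j)-y_{i_j}^\dagger,\,F'_{i_j}(u_j)(u_l-\hat u)\rangle$. Splitting $u_l-\hat u=(u_l-u_j)+(u_j-\hat u)$ and invoking the tangential cone condition \eqref{tangential} twice — once for $F'_{i_j}(u_j)(u_l-u_j)$, introducing the residual $\|F_{i_j}(u_l)-y_{i_j}^\dagger\|$, and once for $F'_{i_j}(u_j)(u_j-\hat u)$ together with $F_{i_j}(\hat u)=y_{i_j}^\dagger$ — the leading squared-residual terms cancel, leaving a per-term bound of the form $(1+\eta)\|F_{i_j}(u_j)-y_{i_j}^\dagger\|\,\|F_{i_j}(u_l)-y_{i_j}^\dagger\|+2\eta\|F_{i_j}(u_j)-y_{i_j}^\dagger\|^2$. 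Dominating each component residual by the full residual and using the minimality of $l$ (so $\|F(u_l)-y^\dagger\|\le\|F(u_j)-y^\dagger\|$ for the indices $j$ occurring in each half), the whole sum is controlled by $\Omega(1+3\eta)\sum_{j\ge m}\|F(u_j)-y^\dagger\|^2$, a tail of the $\daleth$-series. Combining the three vanishing contributions for both halves yields $\|u_n-u_m\|\to0$ as $m,n\to\infty$ on $\daleth$, and since $\mathbb{P}(\daleth)=1$ the iterates form a Cauchy sequence almost surely.

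The main obstacle I anticipate is reconciling the stochastic iteration with the deterministic Landweber-type Cauchy machinery: the recursion exposes only the single realized component residual $\|F_{i_k}(u_k)-y_{i_k}^\dagger\|$, whereas summability is available solely for the full residual $\sum_i\|F_i(u_k)-y_i^\dagger\|^2$ through $\daleth$. The resolution is to execute the entire argument along fixed sample paths in $\daleth$ and to dominate each component residual by the full one. Additional care is needed to ensure that a single minimizing index $l$ simultaneously controls the $[l,n]$ and $[m,l]$ sums, which is guaranteed by expanding both halves about the common base $u_l-\hat u$, and to verify that the ball confinement of Lemma~\ref{first lemma} transfers to the exact-data iterates so that \eqref{solved in Lemma 1} and the tangential cone condition remain valid at every step.
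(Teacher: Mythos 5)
Your proof is correct, but it runs along a genuinely different track than the paper's. The paper executes the entire argument in the mean-square framework: it treats $\mathbb{E}[\langle \cdot,\cdot\rangle]$ as an inner product on random variables, proves convergence of $\mathbb{E}[\|u_k-u^\dagger\|^2]$ via a subsequence argument combined with the product estimate $\prod_{l=m}^{k-1}(1-\lambda_l)\to 1$, chooses the intermediate index $m$ to minimize the \emph{expected} residual $\mathbb{E}[\|y-F(u_i)\|^2]$, and concludes that $\mathbb{E}[\|e_n-e_k\|^2]\to 0$, i.e.\ Cauchyness in mean square. You instead work pathwise on the probability-one event $\daleth$ from \eqref{event daleth}: the recursion, the monotonicity argument (the nonincreasing sequence $\|u_k-\hat u\|^2 + C\sum_{j\ge k}\lambda_j$, which is cleaner than the paper's subsequence-plus-product device), the residual-minimizing index $l$, and the tail estimates are all per sample path, yielding an almost-sure Cauchy property. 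The skeleton is the same in both cases --- error-norm convergence, minimizing-residual index, inner-product expansion of $\|e_n-e_m\|^2$, telescoping the iteration, tangential cone condition giving the $(1+3\eta)$ constant, and summability of residuals --- but the two frameworks buy different things. Your pathwise version proves exactly the ``almost surely'' statement as the lemma is worded (the paper's own proof, strictly speaking, delivers the mean-square version and is loose about the distinction), at the cost of invoking Tonelli and the event $\daleth$ inside the lemma; the paper's mean-square version meshes more directly with how the subsequent results (e.g.\ Theorem~\ref{thm for noisy}) are phrased. One caveat you share with the paper: citing Corollary~\ref{for sum} for the exact-data residual summability imports the constant $D_1^{-1}$ and hence condition \eqref{ass of lemma 3}, which is not among this lemma's hypotheses; however, under \eqref{condn for lemma 4} the same summability follows directly from your own exact-data recursion (sum it, take expectations, and use $\mathbb{E}[\|F_{i_k}(u_k)-y^\dagger_{i_k}\|^2\,|\,\mathcal{F}_k]=\tfrac1P\|F(u_k)-y^\dagger\|^2$) with $D_2^{-1}$ in place of $D_1^{-1}$, so this is cosmetic in both your proof and the paper's.
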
 

\begin{proof}
 
 Firstly, we define $e_k := u_k - u^{\dagger}$, for a solution $u^\dagger$ of \ref{ststart}. As shown in Lemma \ref{first lemma}, $\mathbb{E}[\| e_k\|^2]$ is a bounded sequence and hence it has a convergent subsequence $\mathbb{E}[\| e_{k_n}\|^2]$. Let it converge to some $\epsilon \geq 0$. It follows directly from \ref{solved in Lemma 1} that, for $\delta = 0$ we have 
 \begin{align} \label{solved in Lemma 3}
\|u_{k+1} - u^{\dagger}\|^2 \nonumber &\leq  (1 - \lambda_k)^2 \|u_k - u^{\dagger}\|^2 + \lambda_k^2 (1 + \kappa^{-2}\omega_k^2 L^2) \|u_0 - u^{\dagger}\|^2 \\ \nonumber
& + 2 \lambda_k (1 - \lambda_k) \|u_k - u^{\dagger}\| \|u_0 - u^{\dagger}\| \\
&- (2\omega_k(1 - \lambda_k)(1-\eta) - \omega_k^2 L^2 - \kappa^2)\|y_{i_k} - F_{i_k}(u_k)\|^2.  \nonumber
\end{align}
From the definition of $F$ and the measurability of $u_k^{\delta}$ concerning $\mathcal{F}_k$, we deduce that
\begin{align} 
\mathbb{E}[\|u_{k+1} - u^{\dagger}\|^2 | \mathcal{F}_k] \nonumber &\leq \mathbb{E}[ (1 - \lambda_k)^2 \|u_k - u^{\dagger}\|^2  + \lambda_k^2 (1 + \kappa^{-2}\omega_k^2L^2)\|u_0 - u^{\dagger}\|^2  \\ \nonumber
& \quad + 2 \lambda_k (1 - \lambda_k)\|u_k - u^{\dagger}\|  \|u_0 - u^{\dagger}\| | \mathcal{F}_k]\\
& \quad - \frac{1}{P}\|y - F(u_k)\|^2 (2\omega(1 - \lambda_k)(1-\eta) - \Omega^2 L^2 - \kappa^2). \nonumber
\end{align}
Taking the total conditional and using \ref{condn for lemma 4}, we get
 \begin{align} 
\mathbb{E}[\|u_{k+1} - u^{\dagger}\|^2 ] + \frac{D_2}{P} \mathbb{E}[\|y - F(u_k)\|^2] \nonumber &\leq  (1 - \lambda_k)^2 \mathbb{E}[\|u_k - u^{\dagger}\|^2 ] + \lambda_k^2 (1 + \kappa^{-2}\Omega^2L^2) \|u_0 - u^{\dagger}\|^2 \\ \nonumber
& + 2 \lambda_k (1 - \lambda_k) \mathbb{E}[\|u_k - u^{\dagger}\| ]\|u_0 - u^{\dagger}\|.
\end{align} 
This means that
\[\mathbb{E}[\|u_{k+1} - u^{\dagger}\|^2 ]^{1/2} \leq  (1 - \lambda_k) \mathbb{E}[\|u_k - u^{\dagger}\|^2 ]^{1/2} + \rho\lambda_k \sqrt{1 + \kappa^{-2}\Omega^2 L^2}.\]
Hence by induction, we further get that
\begin{equation}\label{mid eqn first theorem}
    \mathbb{E}[\|e_k\|^2]^{1/2} \leq \mathbb{E}[\|e_m \|^2]^{1/2} \prod_{l=m}^{k-1}(1 - \lambda_l) + \rho \sqrt{1 + \kappa^{-2}\Omega^2 L^2}(1 - \prod_{l=m}^{k-1}(1-\lambda_l))
\end{equation}
for $m < k,$ where we have used the following result
\[1 - \prod_{l=m}^{n}(1 - \lambda_l) = \sum_{j =m}^{n} \lambda_j \prod_{l=j+1}^{n}(1-\lambda_l) \]
which, under the standard condition that $ \prod_{l = n+1}^{n} (1 - \lambda_l) =1$, is satisfied for $n \geq m$. \\
If \( k_{n-1} < k < k_n \), then according to \ref{mid eqn first theorem} it follows that
\begin{align*}
    \mathbb{E}[\|e_k\|^2]^{1/2} &\leq \mathbb{E}[\|e_{k_{n-1}}\|^2]^{1/2} + ( \rho\sqrt{1+\kappa^{-2}\Omega^2 L^2} - \mathbb{E}[\|e_{k_{n-1}}\|^2]^{1/2} )\left(1- \prod_{l=k_{n-1}}^{k-1} (1 - \lambda_l)\right), \\
     \mathbb{E}[\|e_k\|^2 ]^{1/2} &\geq \mathbb{E}[\|e_{k_{n}}\|^2]^{1/2} - ( \rho\sqrt{1+\kappa^{-2}\Omega^2 L^2} - \mathbb{E}[\|e_{k_{n}}\|^2]^{1/2} )\left(1- \prod_{l=k}^{k_{n-1}} (1 - \lambda_l)\right).
\end{align*}
These bounds, along with the convergence of \( \mathbb{E}[\|e_{k_n}\|^2] \), directly imply
\[ \lim_{k \to \infty} \mathbb{E}[\|e_k\|^2]^{1/2} = \epsilon^{1/2}, \]
since \ref{cond 2 for first lemma} implies that
\begin{equation}\label{prod beta in thm 1}
    \prod_{l=m}^{k-1} (1 - \lambda_l) \to 1 \text{ as } m \to \infty, \; k > m.
\end{equation}
Note that the operator \(\mathbb{E}[\langle \cdot, \cdot \rangle]\) defines an inner product. Now, for any \(n \geq k\), we pick an index \(m\) such that \(k \leq m \leq n\) and
\begin{equation}\label{choiceofl}
    \mathbb{E}[\| y - F(u_m)\|^2] \leq \mathbb{E}[\| y - F(u_i)\|^2] \quad \text{for all } k \leq i \leq n.
\end{equation}
Using the triangular inequality
\begin{equation}\label{tringular}
   \mathbb{E}[\| e_n - e_k\|^2]^{1/2} \leq \mathbb{E}[\| e_n - e_m\|^2]^{1/2} + \mathbb{E}[\| e_m - e_k\|^2]^{1/2}, 
\end{equation}
along with the identities
\begin{align}\label{enem}
    \mathbb{E}[\| e_n - e_m\|^{2}] &= 2\mathbb{E}[\langle e_m - e_n , e_m \rangle] + \mathbb{E}[\| e_n\|^{2}] - \mathbb{E}[\| e_m\|^{2}], \\  \label{emek}
    \mathbb{E}[\| e_m - e_k\|^{2}] &= 2\mathbb{E}[\langle e_m - e_k, e_m \rangle] + \mathbb{E}[\| e_k\|^{2}] - \mathbb{E}[\| e_m\|^{2}],
\end{align}
we aim to prove the desired result. To that end, it suffices to show that both \(\mathbb{E}[\| e_n - e_m\|^2]\) and \(\mathbb{E}[\| e_m - e_k\|^2]\) converge to zero as \(k \to \infty\).

Since \(\mathbb{E}[\| e_k \|^2] \to \epsilon\) as \(k \to \infty\), the final two terms on the right-hand sides of the above identities vanish in the limit, i.e., \(\epsilon - \epsilon = 0\). Thus, it remains to show that \(\mathbb{E}[\langle e_m - e_k, e_m \rangle]\) also tends to zero as \(k \to \infty\). The same argument applies to \(\mathbb{E}[\langle e_m - e_n, e_m \rangle]\).

To show this, we recall the recursive structure of \(u_k\), from which we obtain
\[
e_k = e_m \prod_{l=m}^{k-1} (1 - \lambda_l) + \sum_{j=m}^{k-1} \omega_j F_{i_j}^\prime(u_{j})^\ast (y_{i_j} - F_{i_j}(u_j)) \prod_{l=j+1}^{k-1} (1 - \lambda_l) + (u_0 - u^\dag) \prod_{l=m}^{k-1} (1 - \lambda_l),
\]
for \(m < k\). This formula allows us to determine that
\begin{align*}
     |\mathbb{E}[\langle e_m - e_n, e_m \rangle]| &\leq \left( 1- \prod_{l=m}^{n-1} (1 - \lambda_l) \right) |\mathbb{E}[\langle u_0 - u_m, e_m \rangle]|\\
     & \quad + \sum_{j=m}^{n-1}\omega_j \left( \prod_{l=j+1}^{n-1} (1 - \lambda_l) \right) |\mathbb{E}[\langle y_{i_j} - F_{i_j}(u_j), F_{i_j}^\prime(u_j)(u_m - u^\dag) \rangle], \\
     |\mathbb{E}[\langle e_m - e_n, e_m \rangle]| &\leq \left( 1- \prod_{l=m}^{n-1} (1 - \lambda_l) \right) |\mathbb{E}[\langle u_0 - u_m, e_m \rangle]|\\
     & \quad + \sum_{j=m}^{n-1}\omega_j \left( \prod_{l=j+1}^{n-1} (1 - \lambda_l) \right) |\mathbb{E}[\langle y - F(u_j), F^\prime(u_j)(u_m - u^\dagger) \rangle] |.
\end{align*}
Due to  \ref{prod beta in thm 1}, the first term on the right-hand side approaches zero because \( |\mathbb{E}[ \langle u_0 - u_m, e_m \rangle ]| \) is bounded. In addition to that, by using the inequality \( 1-\lambda_k \leq 1 \) along with  \ref{cond 2 for first lemma}, one can estimate the second term on the right-hand side as
\begin{equation}\label{neww}
     \Omega(1 + 3\eta) \sum_{j=m}^{n-1} \mathbb{E}[ \|y - F(u_j)\|^2]. \end{equation}
Now from Corollary \ref{for sum}, it follows that \ref{neww} approaches zero as \( k \to \infty \). This means \( \mathbb{E}[\langle e_m - e_n, e_m \rangle] \to 0 \). Analogously,  we  have \( \mathbb{E}[ \langle e_m - e_k, e_m \rangle] \to 0 \).  Hence from \eqref{enem} and \eqref{emek} we have 
\[\mathbb{E}[\| e_n - e_m\|^{2}] \to 0 \quad \text{and} \quad  \mathbb{E}[\| e_m - e_k\|^{2}] \to 0 \]
as $ k \to \infty.$
Hence, the desired result follows directly from inequality~\eqref{tringular}.
\end{proof}
Building upon the preparatory results, particularly the characterization of the event~\(\daleth\) in~\eqref{event daleth}, we now proceed to establish the almost sure convergence of Algorithm~\ref{alg:Exact-data IRSGD}. This theorem represents the central outcome of the current subsection.

\begin{theorem}\label{thm for noisy}
  Let all the conditions of Lemma $\ref{first lemma}$ and  $\ref{condn for lemma 4}$ hold. Then, the sequence $\{u_k\}_{k\geq 1}$ produced by Algorithm~\ref{alg:Exact-data IRSGD} converges almost surely to an element of $\mathcal{D}_{c(\rho)}(\hat{u})$ in $U$, i.e.,
     \[ \lim_{k \to 0} \mathbb{E}[\|u_k - u^\dagger\|^2] = 0, \quad u^\dagger \in \mathcal{D}_{c(\rho)}(\hat{u}). \]
\end{theorem}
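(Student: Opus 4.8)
The plan is to combine the mean-square Cauchy property from Lemma \ref{uk convergent} with the residual summability of Corollary \ref{for sum}, using the continuity of $F$ to identify the limit as a solution in $\mathcal{D}_{c(\rho)}(\hat u)$.

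First I would invoke Lemma \ref{uk convergent}, which shows that $\{u_k\}$ is Cauchy in mean square, i.e. $\mathbb{E}[\|u_n - u_m\|^2] \to 0$ as $m,n \to \infty$. By completeness of the space of square-integrable $U$-valued random variables, there exists a limit $u^\dagger$ with $\mathbb{E}[\|u_k - u^\dagger\|^2] \to 0$; this already establishes the displayed convergence, so it remains to verify $u^\dagger \in \mathcal{D}_{c(\rho)}(\hat u)$. For the ball membership, Lemma \ref{first lemma} gives $u_k \in B_{c(\rho)}(\hat u)$ for every $k$; since this ball is closed and convex and $u_k \to u^\dagger$ in mean square (hence almost surely along a subsequence), the limit satisfies $u^\dagger \in B_{c(\rho)}(\hat u)$ almost surely.

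The crucial step is to show $F(u^\dagger) = y^\dagger$. Here I would first extract local Lipschitz continuity of $F$ on $B_{\rho+c(\rho)}(u_0)$ from the tangential cone condition \ref{tangential} together with the bound \ref{bddness of F_i'}: since $(1-\eta)\|F_i(\bar u) - F_i(u)\| \le \|F_i'(u)(\bar u - u)\| \le L\|\bar u - u\|$, each $F_i$ (and therefore $F$) is Lipschitz with constant proportional to $L/(1-\eta)$. Consequently $\mathbb{E}[\|F(u_k) - F(u^\dagger)\|^2] \le C\,\mathbb{E}[\|u_k - u^\dagger\|^2] \to 0$. On the other hand, Corollary \ref{for sum} yields $\sum_{l} \mathbb{E}[\|F(u_l) - y^\dagger\|^2] < \infty$, so the summand tends to zero, i.e. $\mathbb{E}[\|F(u_k) - y^\dagger\|^2] \to 0$. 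By uniqueness of mean-square limits, $F(u^\dagger) = y^\dagger$ almost surely, and hence $u^\dagger \in \mathcal{D}_{c(\rho)}(\hat u)$, as required.

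I expect the main obstacle to be precisely this identification of the limit as a solution: Lemma \ref{uk convergent} guarantees only that the iterates converge, and the real content lies in coupling the residual summability of Corollary \ref{for sum} with the (local Lipschitz) continuity of $F$ derived from Assumption \ref{common assumptions}. A secondary subtlety is the measure-theoretic passage from mean-square to almost-sure statements (for both the ball membership and $F(u^\dagger)=y^\dagger$), which I would resolve by passing to an almost-surely convergent subsequence wherever the almost-sure conclusion is needed.
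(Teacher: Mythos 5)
Your proposal is correct and follows essentially the same route as the paper's proof: the Cauchy property from Lemma \ref{uk convergent}, the residual summability from Corollary \ref{for sum}, and continuity of $F$ to identify the limit as a solution lying in $B_{c(\rho)}(\hat u)$. If anything, your version is slightly more careful on the measure-theoretic side, since the paper invokes plain continuity of $F$ where your explicit Lipschitz bound $(1-\eta)\|F_i(\bar u)-F_i(u)\| \le \|F_i'(u)(\bar u - u)\| \le L\|\bar u - u\|$ is what actually justifies passing to the limit in the mean-square sense.
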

\begin{proof}
Lemma~\ref{uk convergent} establishes that the sequence of iterates \(\{u_k\}_{k \geq 1}\) produced by Algorithm~\ref{alg:Exact-data IRSGD} forms a Cauchy sequence in \(U\). Consequently, the sequence admits a limit, denoted by \(u^\dagger \in U\).
 Since the operator \(F\) is continuous, it follows that \(F(u_k) \to F(u^\dagger)\) as \(k \to \infty\).
Moreover, Corollary~\ref{for sum} guarantees that  
\[
\lim_{k \to \infty} \mathbb{E}\left[\|F(u_k) - y^\dagger\|^2 \right] = 0,
\]  
which implies that \(F(u^\dagger) = y^\dagger\). Given that each iterate \(u_k\) lies within the ball \(B_{c(\rho)}(\hat{u})\), we conclude that \(u^\dagger \in B_{c(\rho)}(\hat{u})\) almost surely. Consequently, \(u^\dagger\) belongs to the admissible set \(\mathcal{D}_{c(\rho)}(\hat{u})\) with probability one, thereby completing the proof.
\end{proof}


\subsection{Regularization Quality}\label{subsection noisy case} 
The primary goal of this subsection is to demonstrate that the IRSGD method possesses the regularizing property when employed with the proposed heuristic stopping rule. Our approach begins by establishing the pathwise stability of the IRSGD iterates \(u_k^\delta\) as \(\delta \to 0\), thereby linking Algorithm~\ref{alg:IRSGD Algorithm} for noisy data to its exact-data counterpart in Algorithm~\ref{alg:Exact-data IRSGD}. The proof structure is adapted from the methodology outlined in \cite[Lemma~3.6]{jin2020convergence}.

\begin{lemma} \label{first lemma for noisy}
   Assume that every condition in Lemma $\ref{first lemma}$ is met. For any given \( k \in \mathbb{N} \) and any realization of the sequence \((i_1, \ldots, i_{k-1}) \in \mathcal{F}_k\), let \( u_k \) and \( u_k^\delta \) be the iterates produced by the IRSGD method $\ref{SIRLI 2}$ for the exact and noisy data, respectively. Then
\[
\lim_{\delta \to 0^+} \mathbb{E} [\| u_k^\delta - u_k\|^2] = 0.
\]

\end{lemma}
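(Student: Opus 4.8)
The plan is to argue by induction on the iteration index $k$ for a fixed realization of the random indices, so that $u_k$ and $u_k^\delta$ are deterministic and the (conditional) expectation $\mathbb{E}[\|u_k^\delta - u_k\|^2]$ collapses to the squared norm itself; establishing the pathwise convergence $\|u_k^\delta - u_k\| \to 0$ as $\delta \to 0^+$ then yields the stated claim. The base case $k=0$ is immediate, since both the noisy and the exact iterations are initialized at the same guess, whence $u_0^\delta - u_0 = 0$. For the inductive step I would subtract the two update formulas in \eqref{SIRLI 2} to obtain
$$u_{k+1}^\delta - u_{k+1} = (1-\lambda_k)(u_k^\delta - u_k) - \omega_k\big[F'_{i_k}(u_k^\delta)^*(F_{i_k}(u_k^\delta) - y_{i_k}^\delta) - F'_{i_k}(u_k)^*(F_{i_k}(u_k) - y_{i_k}^\dagger)\big],$$
and the entire difficulty is to show that the bracketed stochastic-gradient difference tends to zero under the induction hypothesis $u_k^\delta \to u_k$.

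Before estimating, I would record that all relevant iterates remain in a fixed ball. Since $\hat{k}_\delta \to \infty$ as $\delta \to 0$ by Theorem~\ref{kdelta hat finite}, for all sufficiently small $\delta$ we have $k < \hat{k}_\delta$, so Lemma~\ref{first lemma} (together with its exact-data counterpart) places both $u_k^\delta$ and $u_k$ inside $B_{c(\rho)}(\hat{u}) \subset B_{\rho+c(\rho)}(u_0)$, where the derivative bound $\|F'_{i_k}(\cdot)\| \le L$ from Assumption~\ref{common assumptions} is available. I would then split the bracket as
$$F'_{i_k}(u_k^\delta)^*\big[(F_{i_k}(u_k^\delta) - F_{i_k}(u_k)) - (y_{i_k}^\delta - y_{i_k}^\dagger)\big] + \big[F'_{i_k}(u_k^\delta)^* - F'_{i_k}(u_k)^*\big](F_{i_k}(u_k) - y_{i_k}^\dagger).$$
The first summand is bounded by $L\big(\|F_{i_k}(u_k^\delta) - F_{i_k}(u_k)\| + \delta\big)$, which vanishes in the limit: continuity of $F_{i_k}$ (Assumption~\ref{common assumptions}(i)) together with the induction hypothesis forces $F_{i_k}(u_k^\delta) \to F_{i_k}(u_k)$, while $\|y_{i_k}^\delta - y_{i_k}^\dagger\| \le \delta_{i_k} \le \delta \to 0$.

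The main obstacle is the second summand $\big[F'_{i_k}(u_k^\delta)^* - F'_{i_k}(u_k)^*\big](F_{i_k}(u_k) - y_{i_k}^\dagger)$. Here the residual $F_{i_k}(u_k) - y_{i_k}^\dagger$ is a fixed, generically nonzero vector independent of $\delta$, so the mere bound $\|F'_{i_k}\| \le L$ only delivers the non-vanishing estimate $2L\,\|F_{i_k}(u_k) - y_{i_k}^\dagger\|$; the tangential-cone inequality \eqref{tangential} is of no help here, as it controls $F'_{i_k}$ only along the direction $u_k^\delta - u_k$, not along an arbitrary residual. To make this term vanish I would invoke the continuity of the Fréchet-derivative map $u \mapsto F'_{i_k}(u)$ on $B_{\rho+c(\rho)}(u_0)$ — the regularity accompanying the nonlinear-operator framework of \cite[Lemma~3.6]{jin2020convergence} — so that $u_k^\delta \to u_k$ gives $F'_{i_k}(u_k^\delta)^* \to F'_{i_k}(u_k)^*$ and hence the second summand tends to zero. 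Combining the two estimates with $0 \le 1-\lambda_k \le 1$ and $\omega_k \le \Omega$ yields $\|u_{k+1}^\delta - u_{k+1}\| \to 0$, which closes the induction; taking the (trivial) conditional expectation along the fixed path then gives $\lim_{\delta \to 0^+}\mathbb{E}[\|u_{k+1}^\delta - u_{k+1}\|^2] = 0$, completing the proof.
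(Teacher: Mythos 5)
Your proof is correct, and it shares the paper's skeleton: induction on $k$ along a fixed realization, subtraction of the two recursions in \ref{SIRLI 2}, and reduction of the inductive step to the continuity of $F_{i_k}$ and of the derivative map $u \mapsto F'_{i_k}(u)$ (the paper, too, invokes continuity of $F_i'$ at the end of its proof, even though this is not listed in Assumption \ref{common assumptions}). The bookkeeping, however, differs in two genuine ways. First, your splitting of the gradient gap is the mirror image of the paper's: the paper writes it as
\[
\big(F'_{i_k}(u_k^\delta)^* - F'_{i_k}(u_k)^*\big)\big(F_{i_k}(u_k^\delta)-y_{i_k}^\delta\big) \;+\; F'_{i_k}(u_k)^*\Big[\big(F_{i_k}(u_k^\delta)-y_{i_k}^\delta\big) - \big(F_{i_k}(u_k)-y_{i_k}^\dagger\big)\Big],
\]
so the derivative difference multiplies the \emph{noisy} residual, whose size depends on $\delta$; to control it, the paper first proves two auxiliary claims (its claims (i) and (ii)) — uniform boundedness of $\|u_k\|$ and of $\|u_k^\delta - u_k\|$ over realizations — via recursive growth estimates based on \ref{bddness of F_i'} and \ref{tangential}. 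In your splitting the derivative difference multiplies the fixed exact residual $F_{i_k}(u_k)-y_{i_k}^\dagger$, which needs no uniform bound at all, so those auxiliary claims become unnecessary. Second, the one bound you do need, $\|F'_{i_k}(u_k^\delta)\|\le L$, you import from the ball containment of Lemma \ref{first lemma} combined with $\hat{k}_\delta \to \infty$ (Theorem \ref{kdelta hat finite}), rather than proving boundedness from scratch as the paper does; this is legitimate because the lower bound $\hat{k}_\delta + a \ge M/(\tau\|y^\delta-y^\dagger\|)$ extracted from Rule \ref{modified discrepancy} holds on every sample path, so the smallness threshold on $\delta$ is path-independent. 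The trade-off: your route yields a leaner induction step but couples this stability lemma to the modified-discrepancy machinery of Subsection \ref{subsec-3.2}, whereas the paper's growth estimates keep the lemma self-contained. Both arguments are sound.
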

\begin{proof}
We use induction to prove the assertion. For \( k = 0 \) it is trivial. Assume that it is true for any path in \( \mathcal{F}_k \) and for all indices up to \( k \). According to \ref{SIRLI 2}, for any fixed path \((i_1, \ldots, i_k)\), we get
\begin{align*}
    u_{k+1}^\delta - u_{k+1} &= (1 -\lambda_k)(u_k^\delta - u_k) - \omega_k ( (F_{i_k}'(u_k^\delta)^* - F_{i_k}'(u_k)^*)(F_{i_k}(u_k^\delta) - y_{i_k}^\delta)\\
    & \quad + F_{i_k}'(u_k)^* ((F_{i_k}(u_k^\delta) - y_{i_k}^\delta) - (F_{i_k}(u_k) - y_{i_k}^\dagger)) ).
\end{align*}
Thus, by the triangle inequality, we get
\begin{align} \label{first in lemma 5}
    \| u_{k+1}^\delta - u_{k+1} \| \nonumber &\leq (1-\lambda_k)\| u_k^\delta - u_k \| + \omega_k \| F_{i_k}'(u_k^\delta)^* - F_{i_k}'(u_k)^* \| \| F_{i_k}(u_k^\delta) - y_{i_k}^\delta \|\\
    &+ \omega_k \| F_{i_k}'(u_k)^* \| \| (F_{i_k}(u_k^\delta) - y_{i_k}^\delta) - (F_{i_k}(u_k) - y_{i_k}^\dagger) \|.
\end{align}
We proceed to establish the following statements for any fixed \(k\)

\begin{enumerate}
    \item[(i)] The supremum \( \sup_{(i_1, \ldots, i_{k-1}) \in \mathcal{F}_k} \| u_k \| \) is bounded. 
    \item[(ii)] The supremum of the difference \( \| u_k^\delta - u_k \| \) over all realizations in \(\mathcal{F}_k\), i.e., \( \sup_{(i_1, \ldots, i_{k-1}) \in \mathcal{F}_k} \| u_k^\delta - u_k \| \), is uniformly bounded.
\end{enumerate}
To prove claim~(i), we invoke \eqref{bddness of F_i'} and \eqref{tangential}, which yields that
\begin{align*}
    \| u_{k+1} - \hat{u} \| &\leq \| u_k - \hat{u} \| + \omega_k \| F_{i_k}'(u_k)^* \| \| F_{i_k}(u_k) - y_{i_k}^\dagger \| + \lambda_k \| u_0 - u_k  \| \\
    &\leq \left( 1 + \lambda_k + \omega_k \frac{L^2}{1 - \eta} \right) \| u_k - \hat{u} \| + \lambda_k \|u_0 - \hat{u}\|.
\end{align*}
This recursive inequality implies that the sequence \(\{ \| u_k - \hat{u} \| \}\) grows at most linearly, and by applying an induction argument, we conclude that  claim~(i) is established.
Similarly, to establish claim (ii), we note that
\begin{align}\label{3.36}
  \nonumber  \| F_{i_k}(u_k^\delta) - y_{i_k}^\delta \| &\leq \| F_{i_k}(u_k^\delta) - F_{i_k}(u_k) \| + \| F_{i_k}(u_k) - y_{i_k}^\dagger \| + \| y_{i_k}^\dagger - y_{i_k}^\delta \| \\
    &\leq \frac{L}{1 - \eta} \left( \| u_k^\delta - u_k \| + \| u_k - \hat{u} \| \right) + \|y^\delta - y^\dagger\|,
\end{align}
and consequently \ref{first in lemma 5} and \ref{3.36} derive that
\begin{align*}
   \| u_{k+1}^\delta - u_{k+1} \| &\leq  \omega_k \left( \frac{L}{1 - \eta} \left( \| u_k^\delta - u_k \| + \| u_k - \hat{u} \| \right) + \|y^\delta - y^\dagger\| \right) \| F_{i_k}'(u_k^\delta)^* - F_{i_k}'(u_k)^* \|  \\
   &+ \omega_k L \| (F_{i_k}(u_k^\delta) - y_{i_k}^\delta) - (F_{i_k}(u_k) - y_{i_k}^\dagger) \| + (1 - \lambda_k)\| u_k^\delta - u_k \|. 
\end{align*}
This, together with a standard induction argument, confirms claim~(ii). Consequently, leveraging both claims, we may now proceed to let
\[ c = \frac{L}{1 - \eta} \sup_{(i_1, \ldots, i_{k-1}) \in \mathcal{F}^k} (\| u_k^\delta - u_k \| + \| u_k - \hat{u} \|) + \|y^\delta - y^\dagger\|. \]
Then, it follows from \ref{first in lemma 5}, \ref{3.36} and \ref{bddness of F_i'} that
\begin{align*}
    \lim_{\delta \to 0^+} \| u_{k+1}^\delta - u_{k+1} \| &\leq (1-\lambda_k)\lim_{\delta \to 0^+} \| u_k^\delta - u_k \| + c \omega_k \lim_{\delta \to 0^+} \| F_{i_k}'(u_k^\delta)^* - F_{i_k}'(u_k)^* \| \\
    &+ \omega_k L \lim_{\delta \to 0^+} \| (F_{i_k}(u_k^\delta) - y_{i_k}^\delta) - (F_{i_k}(u_k) - y_{i_k}^\dagger) \|.
\end{align*}
Finally, by the continuity of the operators \(F_i\) and \(F_i'\), we obtain  
\(
\lim_{\delta \to 0^{+}} \|u_{k}^\delta - u_{k}\| = 0,
\)
which, in turn, implies  
\(
\lim_{\delta \to 0^{+}} \mathbb{E}\big[ \|u_{k}^\delta - u_{k}\|^2 \big] = 0.
\)
This completes the proof.
\end{proof}
We are now prepared to formally demonstrate the central result of this subsection, which concerns the regularization property of the IRSGD method.

\begin{theorem}\label{main result}
  Assume that every condition in Lemma $\ref{first lemma}$ is met. Suppose that $\{y^\delta\}$ is the noisy data sequence such that Assumption $\ref{new assumption}$ hold and let IRSGD iteration corresponding to noisy data is stopped at $k_{*}(y^\delta)$ defined via Rule 1. Then there exist a solution $\hat{u}$ of $\ref{stcombined}$ such that 
   \[\mathbb{E}[\|u_{k_{*}(y^\delta)}^\delta - \hat{u}\|^2] \to 0 \quad \text{ as} \quad \delta \to 0.\]
   \end{theorem}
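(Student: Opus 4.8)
The plan is to take as the target solution the almost-sure limit $\hat u := \lim_{k\to\infty} u_k$ of the exact-data iterates produced by Algorithm~\ref{alg:Exact-data IRSGD}, whose existence and membership in $\mathcal{D}_{c(\rho)}(\hat u)$ are guaranteed by Theorem~\ref{thm for noisy} (so that also $\mathbb{E}[\|u_k-\hat u\|^2]\to 0$). The natural first move is the decomposition
\[
\mathbb{E}\big[\|u_{k_*(y^\delta)}^\delta - \hat u\|^2\big] \le 2\,\mathbb{E}\big[\|u_{k_*(y^\delta)}^\delta - u_{k_*(y^\delta)}\|^2\big] + 2\,\mathbb{E}\big[\|u_{k_*(y^\delta)} - \hat u\|^2\big],
\]
which separates a \emph{stability} contribution (the noisy versus the exact iterate at the common random index $k_*(y^\delta)$) from a \emph{convergence} contribution (the exact iterate at that index against its limit). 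Both iterate families remain in $B_{c(\rho)}(\hat u)$ by Lemmas~\ref{first lemma} and~\ref{3.3}, so each summand is bounded by $4c(\rho)^2$; this uniform bound is what will ultimately permit passage to the limit by dominated convergence.

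For the convergence contribution I would fix a cut-off $K\in\mathbb{N}$ and split on $\{k_*(y^\delta)\ge K\}$ and its complement. On $\{k_*\ge K\}$ one has $\|u_{k_*}-\hat u\|^2\le \sup_{k\ge K}\|u_k-\hat u\|^2$, whose expectation is the tail of an a.s.-convergent, uniformly bounded sequence and can therefore be made arbitrarily small by taking $K$ large (dominated convergence along the exact path furnished by the probability-one event $\daleth$). On $\{k_*<K\}$ the contribution is at most $4c(\rho)^2\,\mathbb{P}(k_*<K)$, which must vanish as $\delta\to0$; this is where the heuristic control enters, since Lemma~\ref{secound lemma} gives $\mathbb{E}[k_*(y^\delta)\|y^\delta-y^\dagger\|^2]\to0$ and $\mathbb{E}[\Psi(k_*,y^\delta)]\to0$, which should force $k_*$ to escape any fixed window with vanishing probability as the noise decreases.

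For the stability contribution I would use the same window. On $\{k_*<K\}$ one bounds $\mathbb{E}[\|u_{k_*}^\delta-u_{k_*}\|^2\mathbf{1}_{k_*<K}]\le\sum_{n=0}^{K-1}\mathbb{E}[\|u_n^\delta-u_n\|^2]$, a finite sum each term of which tends to $0$ by the pathwise stability Lemma~\ref{first lemma for noisy}; hence this piece vanishes as $\delta\to0$ for every fixed $K$. The delicate part is the tail $\{k_*\ge K\}$, where the index is simultaneously random and growing with $1/\delta$, so the fixed-$k$ stability estimate no longer applies directly and the accumulated noise error cannot be controlled uniformly in $k$. Here I would invoke the probability-one event on which the exact iterates are Cauchy, together with the pathwise estimate inside Lemma~\ref{first lemma for noisy} along the shared sample path, and combine it with the stopping-index control $\mathbb{E}[k_*\|y^\delta-y^\dagger\|^2]\to0$ of Lemma~\ref{secound lemma}. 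Equivalently, via the subsequence criterion one argues that along any $\delta_j\to0$ a further subsequence drives the whole expression to zero, treating the bounded-$k_*$ case by Lemma~\ref{first lemma for noisy} together with Lemma~\ref{secound lemma} (whose vanishing residual forces $u_{k_*}$ onto a solution) and the divergent case by the exact-data convergence of Theorem~\ref{thm for noisy}.

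I expect the main obstacle to be exactly this tail regime $\{k_*(y^\delta)\ge K\}$: reconciling the fixed-index stability result with the random, noise-dependent, non-causal stopping index $k_*(y^\delta)$ that may grow without bound as $\delta\to0$. The resolution hinges on coupling the Cauchy structure of the exact iterates on the probability-one event with the heuristic estimates of Lemma~\ref{secound lemma}, after which the uniform bound $\|u_{k_*}^\delta-\hat u\|\le 2c(\rho)$ permits the final passage to the limit by dominated convergence, yielding $\mathbb{E}[\|u_{k_*(y^\delta)}^\delta-\hat u\|^2]\to0$.
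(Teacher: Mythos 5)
Your proposal has a genuine gap, and it sits exactly where you place your main claim. You assert that $\mathbb{E}[k_*(y^\delta)\|y^\delta-y^\dagger\|^2]\to 0$ and $\mathbb{E}[\Psi(k_*,y^\delta)]\to 0$ ``should force $k_*$ to escape any fixed window with vanishing probability,'' i.e.\ $\mathbb{P}(k_*(y^\delta)<K)\to 0$ for each fixed $K$. Neither estimate implies this: since $\|y^\delta-y^\dagger\|\to 0$ deterministically, both quantities can vanish while $k_*$ stays bounded (e.g.\ when the exact iteration essentially solves the system at some finite index, the minimum of $\Psi$ can remain there for all small $\delta$). It is the modified-discrepancy index $\hat k_\delta$ of Rule~\ref{modified discrepancy} that provably diverges (Theorem~\ref{kdelta hat finite}), not the heuristic index $k_*$; the paper's proof explicitly allows $K:=\liminf_m k_*(y^{\delta_m})<\infty$ and treats it as a separate case. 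This breaks your argument in a second, structural way: you fix $\hat u:=\lim_k u_k$ at the outset, but when $k_*$ stabilizes at a finite $K$, stability (Lemma~\ref{first lemma for noisy}) gives $u^{\delta_m}_{k_*}\to u_K$ and the vanishing residual of Lemma~\ref{secound lemma} gives $F(u_K)=y^\dagger$; yet $u_K$ need not equal $\lim_k u_k$, because after the exact iteration hits a solution the damping term $-\lambda_k(u_k-u_0)$ moves it away again. The theorem only asserts existence of \emph{some} solution, and the paper chooses it adaptively: $\hat u=u_K$ when $K<\infty$, and $\hat u=\lim_k u_k$ when $K=\infty$. With your fixed choice of $\hat u$, the term $\mathbb{E}[\|u_{k_*}-\hat u\|^2\mathbf{1}_{k_*<K}]$ simply need not become small.

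The second gap is the regime you yourself flag as delicate, the stability term on $\{k_*\ge K\}$: invoking ``the Cauchy structure on the probability-one event'' together with Lemma~\ref{secound lemma} restates the difficulty rather than resolving it, since Lemma~\ref{first lemma for noisy} is a fixed-index statement and cannot be applied at an index growing with $1/\delta$. The paper's mechanism here is concrete and is the ingredient your outline is missing: take full expectations in the one-step recursion \eqref{eqn_of_lemma1_part2} and telescope from a fixed index $\hat k$ up to the random large index $K$, which yields \eqref{3.39},
\[
\mathbb{E}\big[\|u^{\delta_m}_{K}-\hat u\|^2\big]\;\le\; 2\,\mathbb{E}\big[\|u^{\delta_m}_{\hat k}-u_{\hat k}\|^2\big]+2\,\mathbb{E}\big[\|u_{\hat k}-\hat u\|^2\big]+C_2\sum_{l=\hat k}^{K-1}\lambda_l+C_3\sum_{l=\hat k}^{K-1}\frac{1}{(l+a)^2},
\]
so the error accumulated between $\hat k$ and $K$ is controlled by tails of the two convergent series $\sum\lambda_l$ and $\sum(l+a)^{-2}$, while the first two terms are handled by fixed-index stability and exact-data convergence (Theorem~\ref{thm for noisy}). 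Your closing parenthetical remark --- subsequences, bounded case via the vanishing residual, divergent case via exact-data convergence --- is indeed the skeleton of the paper's proof, but as written it contradicts your own a priori choice of $\hat u$ and omits the telescoping estimate that makes the divergent case go through.
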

\begin{proof}
 Let $\hat{u}$ be the limit of $u_k$, which solves \ref{stcombined} in $B_{\rho + c(\rho)}(u_0)$ along any sample path belonging to the event 
 $\daleth,$ where $\daleth$ is defined in \eqref{event daleth} with $\mathbb{P}(\daleth)=1.$ To establish the result, it suffices to show that
 \[
 \lim_{\delta \to 0 } \|u_{k_{*}(y^\delta)}^\delta - \hat{u}\|^2] = 0 \quad \text{on the event} \daleth.
 \]
Fix an arbitrary sample path in $\daleth$ along which we define  $K:= \lim \inf_{m \to \infty} k_{*}(y^{\delta_m}),$ where $\{y^{\delta_m}\}$ is a subsequence of $\{y^\delta\}.$ By taking a subsequence of $\{y^{\delta_m}\}$ if necessary, we may assume that $K = \lim_{m \to \infty}k_{*}(y^{\delta_m})$ and, according to Lemma \ref{first lemma for noisy}, we have
    \begin{equation}\label{I eqn in last thm}
        \mathbb{E}[\| u_{k}^{\delta_m} - u_k\|^2] \to 0 \quad \text{as} \quad m \to \infty\ \ \text{for all}\ 0 \leq k \leq K.
    \end{equation}
   There are two possible cases to consider:
\\
\textbf{Case I:} Suppose \( K < \infty \). Then, for sufficiently large $m$, we have $k_{*}(y^{\delta_m}) = K$. Thus, from Lemma \ref{secound lemma} it follows that 
    \[ \mathbb{E} \left[ \|F(u_{K}^{\delta_m}) - y^{\delta_m}\| \right] \rightarrow 0 \quad \text{as} \quad m \rightarrow \infty. \]
     By engaging \ref{I eqn in last thm} and the continuity of $F$, we have $F(u_K) = y^\dagger.$ In other words, $K$th iteration of IRSGD method with exact data is a solution of $F(u) = y^{\dagger}.$ Hence, the method terminates with $\hat{u} = u_K$ and $u_{K}^{\delta_m} \to \hat{u}$ as $m \to \infty.$

 \textbf{Case II:} Suppose $K = \infty$. Then  Theorem \ref{thm for noisy} and Lemma \ref{first lemma for noisy} guarantees that, for each $\epsilon >0,$ there exist  $\hat{k}\in \mathbb{N}$ and $\overline{m}\in \mathbb{N}$ such that 
     \begin{equation}\label{3.38}
          \mathbb{E}[\|u_{\hat{k}} - \hat{u}\|^2] < \frac{\epsilon}{8}, \quad \text{and} \quad \mathbb{E}[\|u_{\hat{k}}^{\delta_m} - u_{\hat{k}}\|^2] < \frac{\epsilon}{8} \quad \text{for all} \quad m \geq \overline{m}.          
     \end{equation}
     At this stage, we  apply the full conditional on \ref{eqn_of_lemma1_part2} to arrive at
     \begin{equation}\label{3.39}
          \mathbb{E}[\|u_{K}^{\delta_m} - \hat{u}\|^2] \leq 2\mathbb{E}[\|u_{\hat{k}}^{\delta_m} - u_{\hat{k}}\|^2 + 2\mathbb{E}[\|u_{\hat{k}} - \hat{u}\|^2 + C_2 \sum_{l = \hat{k}}^{K-1} \lambda_l +  C_3\sum_{l = \hat{k}}^{K-1}\frac{1}{(k + a)^2}  ,
     \end{equation}
     where $C_2 = \rho^2 (1 + \kappa^{-2}\Omega^2 L^2) + 2\rho c(\rho)$ and $C_3 = \frac{M\Omega(1+ \eta)}{\tau}.$ By choosing $\hat{k}$ such that
     \[\sum_{l = \hat{k}}^{K-1} \lambda_l \leq \frac{\epsilon}{4C_2} \quad \text{and} \quad \sum_{l = \hat{k}}^{K-1}\frac{1}{(k + a)^2} \leq \frac{\epsilon}{4C_3},\] and plugging these estimates along with \ref{3.38} in \ref{3.39}, we finally get
     \[\mathbb{E}[\|u_{K}^{\delta_m} - \hat{u}\|^2] \leq \epsilon \quad \text{for all} \quad m \geq \overline{m}. \]
     The above reasoning establishes that $\|u_{k_{*}(y^\delta)}^\delta - \hat{u}\| \to 0$ along any sample path in $\daleth$. Consequently we have  $\|u_{k_{*}(y^\delta)}^\delta - \hat{u}\| \to 0$ as $\delta \to 0$, almost surely.
   Thus, the proof is finished.
\end{proof}
\section{Numerical Validations and Discussions}\label{Section:NE}
In this section, we present a series of numerical experiments to assess the practical performance of the proposed scheme, implemented in conjunction with Rule~\ref{Hanke Rule}. The experiments are designed to illustrate the effectiveness of the method in solving ill-posed inverse problems, encompassing both linear and nonlinear cases.

\subsection{Linear case}
We consider the following lnear system.
 \begin{equation}\label{numsystem}
     F_{i}(u) =  \int_{a}^{b} k(t_{i},s)u(s) \,ds = y_{i}, \hspace{5mm} i = 0, 1, \ldots, P-1,
 \end{equation}
which is derived from a linear integral equation on $[a,b]$, discretized by sampling at points $t_{i} \in [a,b]$ for $i = 0, 1, \ldots, P-1,$ where $t_{i} = a + \frac{i(b-a)}{P-1}.$ For a comprehensive discussion on such problems, see \cite{jin2023convergence}. It is assumed that the kernel function \( k(\cdot,\cdot) \) maintains continuity throughout the domain \( [a,b] \times [a,b] \).
 To evaluate the effectiveness of  the IRSGD method \ref{SIRLI 2}, we apply it to system \ref{numsystem} with $P=1000, [a, b] = [-6,6],$ and the kernel function given by $k(t,s)= \varphi(t-s),$ where
 \(\varphi(t) = (1+\cos(\frac{\pi t}{3}))_{\chi_{\{|t|<3\}}}.\)
The goal is to reconstruct the target solution given as  
\[
u^{\dagger}(s) = \sin\left(\frac{\pi s}{12}\right) + \sin\left(\frac{\pi s}{3}\right) + \frac{1}{200} s^2 (1 - s).
\]
In place of $y_i= F_i(u^{\dagger})$ (exact data), we consider noisy observations given by
\begin{equation}
    y^{\delta}_i = y_i +  {\epsilon_i}\delta_{\text{rel}}|y_i|, \quad i = 0, 1, \ldots, P-1,
\end{equation}
where \(\delta_{\text{rel}}\) denotes the relative noise level and $\epsilon_i$ represents random noise drawn from a standard Gaussian distribution  $N(0,1)$. 
To discretize the problem, we partition the interval $[-6,6]$ into $P-1$ equal subintervals and approximate the integrals using the trapezoidal rule. This leads to a discretized linear system of dimension $n = 1000$ with the corresponding forward operator $F_{n\times n}$. We execute the method \ref{SIRLI 2} with the numerical settings as follows:
\begin{itemize}
    \item [(a)] Initial guess $u_0 = 0.$
     \item[(b)] The step size is chosen as $\omega_k = \omega = 10^{-3}.$
    \item[(c)]  The sequence of weighted parameters $\{\lambda_k\}_{k\geq 1}$ is chosen with $\lambda_k = \frac{1}{k^2}.$
    \item[(d)] The maximum number of iterations, denoted by $k_{\text{max}}$, is set to $1000$.   
    \item [(e)] Noisy data is employed with four noise levels given by $\delta_{\text{rel}} = 10^{-1}, 10^{-2}, 10^{-3}$ and $10^{-4}$.
\end{itemize}

We mainly perform the three experiments as:

 \subsubsection{Convergence behavior of $\Psi (k, y^\delta)$}
 In order to investigate the convergence behavior of $\Psi (k, y^\delta)$ as $\delta \to 0$, we present the plots of the values of $\Psi(k, y^\delta)$ versus $k$ in Figure \ref{fig:Psi_convergence} using the fixed constant $a=100$ (as in Rule \ref{Hanke Rule}) and for different levels of noise. 
\begin{figure*}[htbp]
    \centering
    \includegraphics[width=1.05\textwidth,keepaspectratio]{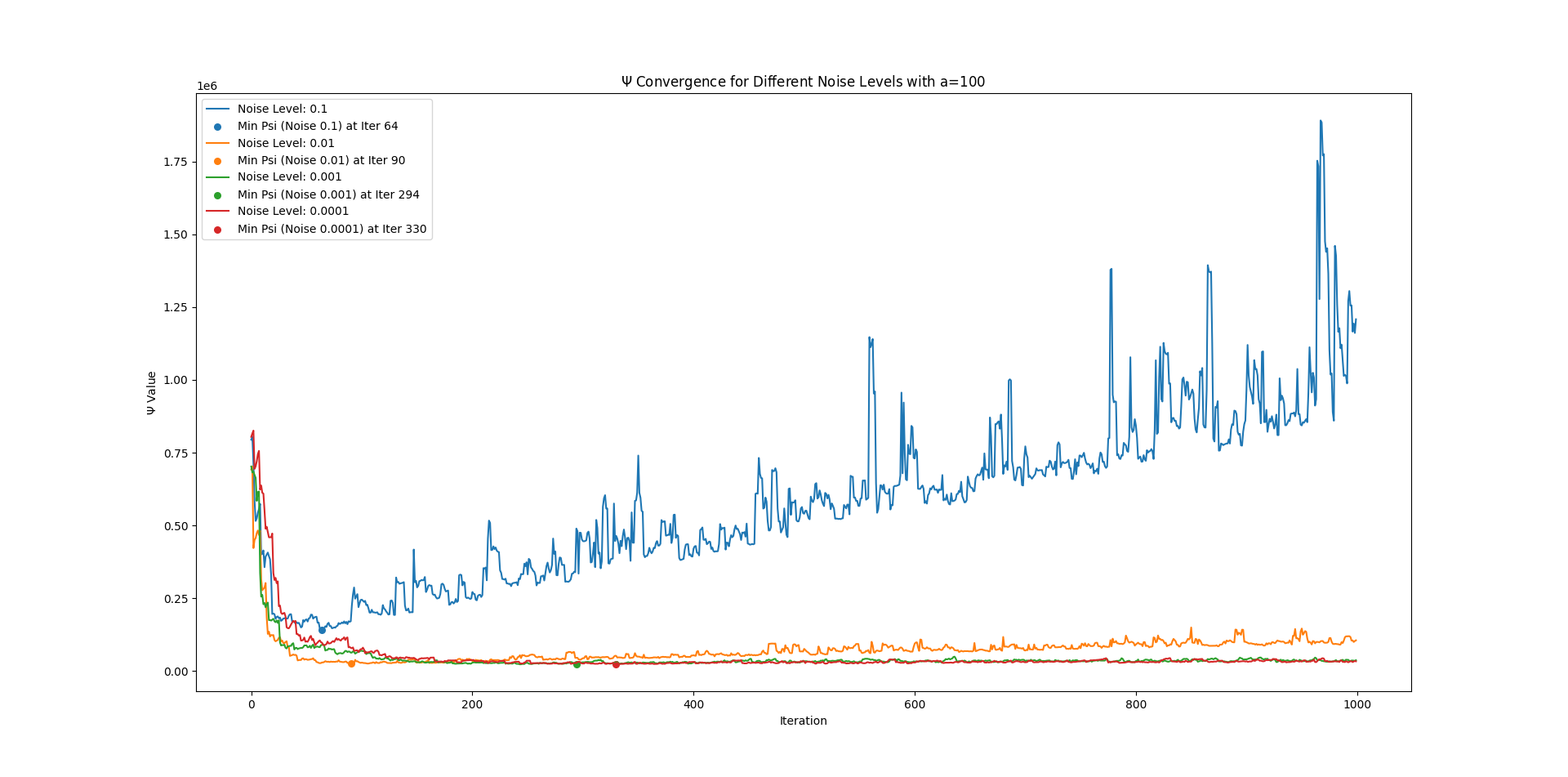}
    \caption{Convergence behavior of $\Psi(k, y^\delta)$ for varying noise levels, illustrating the sensitivity of the proposed stopping rule to measurement perturbations.}
    \label{fig:Psi_convergence}
\end{figure*}

Figure \ref{fig:Psi_convergence} clearly shows that the noise level in the input data has a significant effect on the convergence behavior of $\Psi (k, y^\delta)$. As $\delta_{\text{rel}}$ decreases from $10^{-1}$ to $10^{-4}$, convergence becomes both faster and more stable. Additionally, it demonstrates that $\Psi (k_*, y^\delta) \to 0$ as $\delta \to 0$, which supports the assertion in Lemma \ref{secound lemma}. Furthermore,  as $\delta$ decreases, the stopping index $k_*$ increases.

 \subsubsection{Reconstructed solutions}
We reconstructed the solution using \ref{SIRLI 2} at various noise levels, as shown in Figures \ref{M=0.1}-\ref{M=0.001}. These results illustrate the impact of the constant \(a\) (as defined in Rule \ref{Hanke Rule}) on reconstruction accuracy.
 At a noise level of $10^{-1}$, smaller values of $a$ lead to early stopping of the algorithm, resulting in an inaccurate reconstruction. As $a$ increases (e.g., $a = 100$ and $a = 1000$), the reconstruction accuracy improves significantly.
 \begin{figure}[htbp]
    \centering
    \subfigure[]{\includegraphics[width=0.48\textwidth]{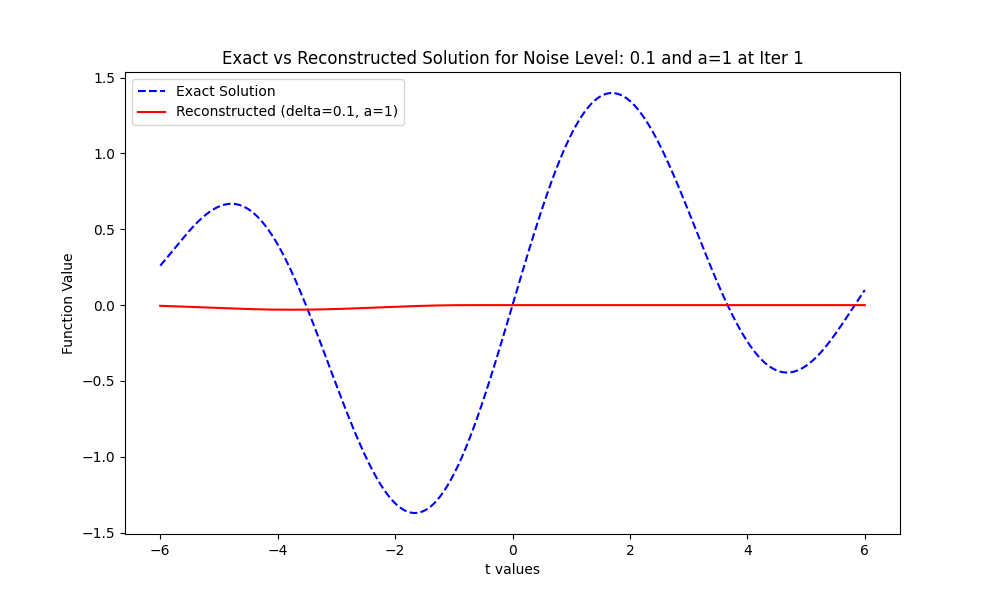}} 
     \subfigure[]{\includegraphics[width=0.48\textwidth]{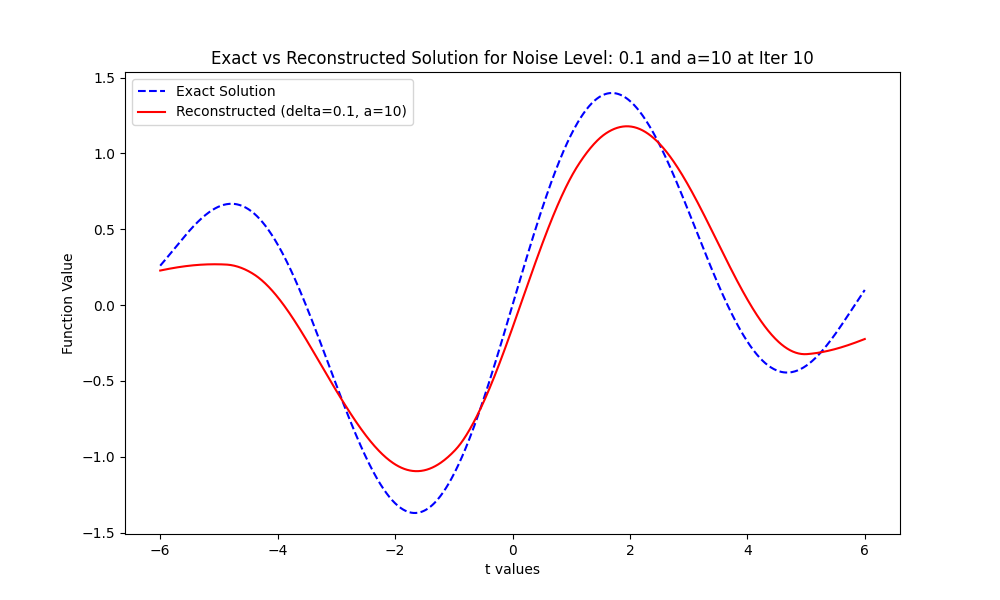}}
 \subfigure[]{\includegraphics[width=0.48\textwidth]{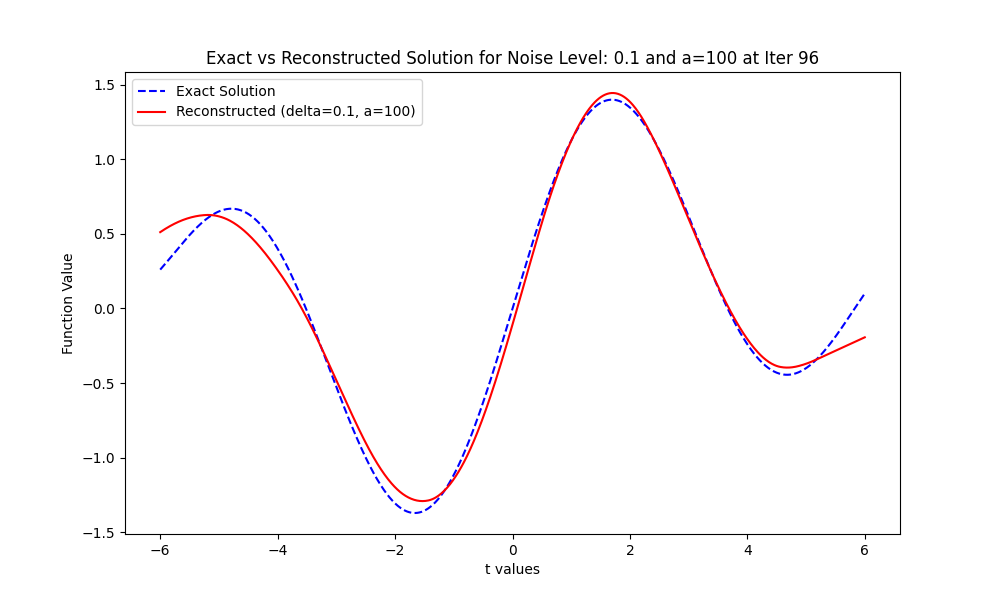}}
  \subfigure[]{\includegraphics[width=0.48\textwidth]{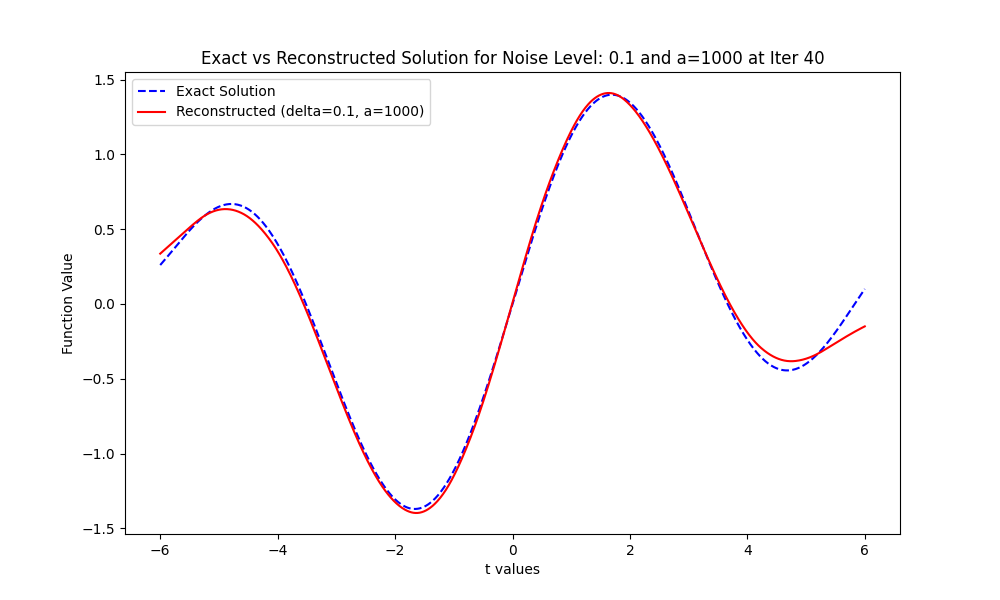}}
      \caption{Effect of the parameter $a$ on reconstruction quality for $\delta_{\text{rel}} = 10^{-1}$: (i) $a = 1$, (ii) $a = 10$, (iii) $a = 100$, and (iv) $a = 1000$, showing side-by-side comparisons of the exact and reconstructed solutions.}
    \label{M=0.1}
\end{figure}
From Figures \ref{M=0.01} and \ref{M=0.001} observe that, as the noise level decreases, the inaccuracy decreases for small values of $a$ (e.g., $a = 1$ and $a = 10$). However, for $a = 1000$, the number of iterations increases significantly compared to the other cases, leading to a higher computational burden. Therefore, for lower noise levels, smaller values of $a$ should be chosen to reduce computational cost and achieve faster results.

\begin{figure}[htbp]
    \centering
    \subfigure[]{\includegraphics[width=0.48\textwidth]{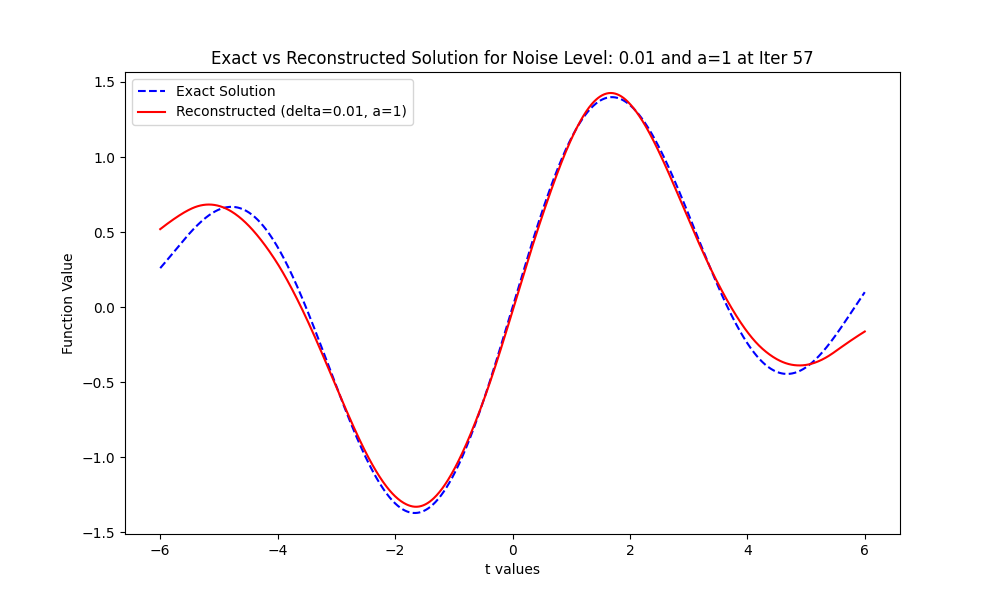}}
    \subfigure[]{\includegraphics[width=0.48\textwidth]{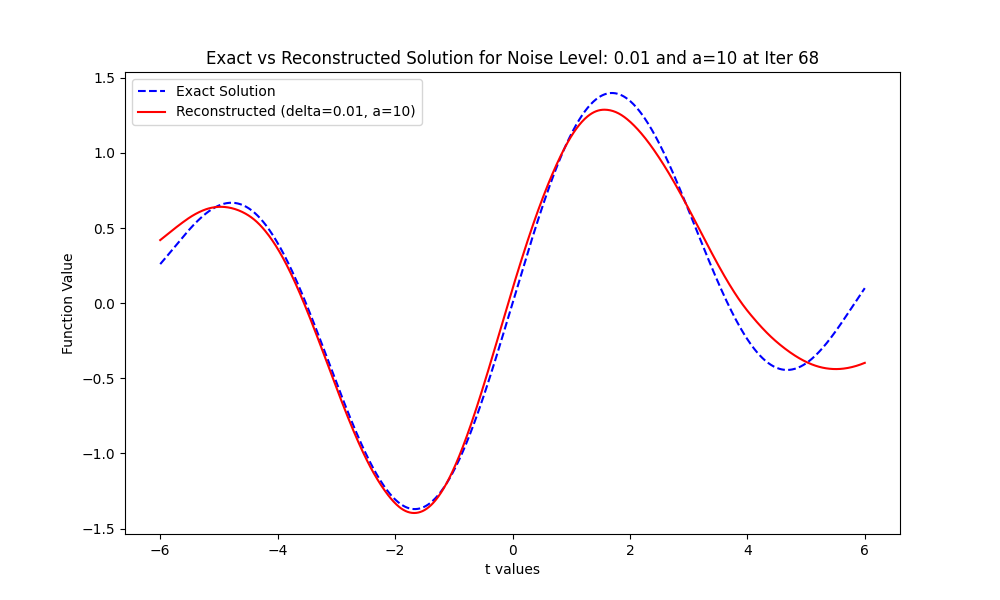}}
    \subfigure[]{\includegraphics[width=0.48\textwidth]{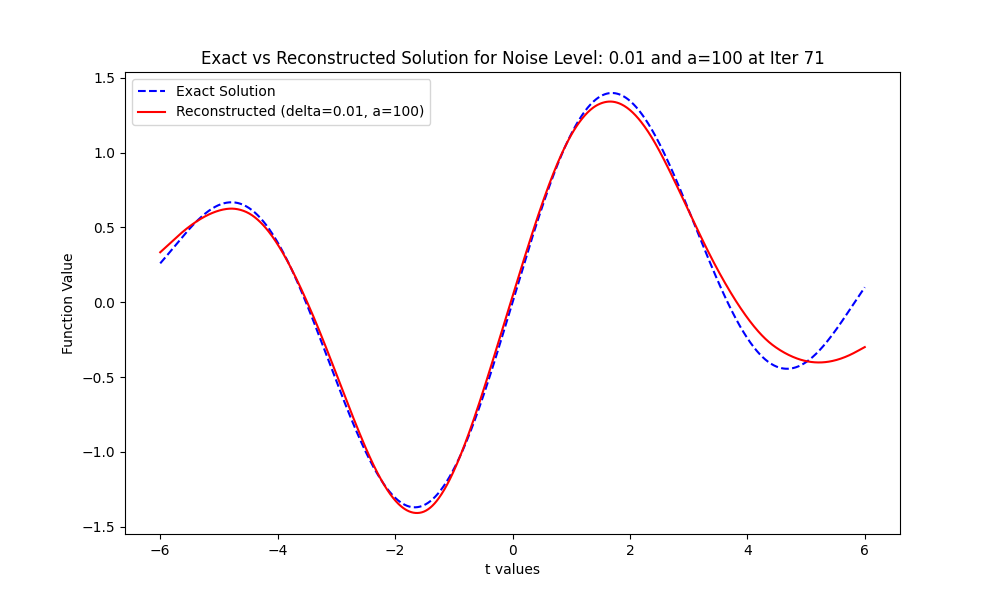}}
    \subfigure[]{\includegraphics[width=0.48\textwidth]{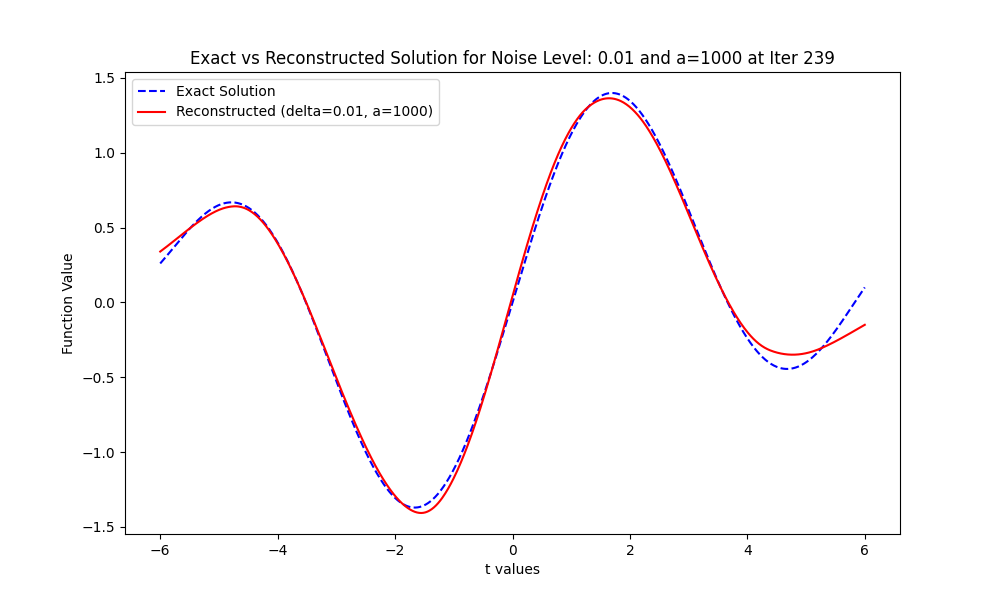}}
    
      \caption{Effect of the parameter $a$ on reconstruction quality for $\delta_{\text{rel}} = 10^{-2}$: (i) $a = 1$, (ii) $a = 10$, (iii) $a = 100$, and (iv) $a = 1000$, showing side-by-side comparisons of the exact and reconstructed solutions.}
    \label{M=0.01}
\end{figure}
\begin{figure}[htbp]
    \centering
    \subfigure[]{\includegraphics[width=0.48\textwidth]{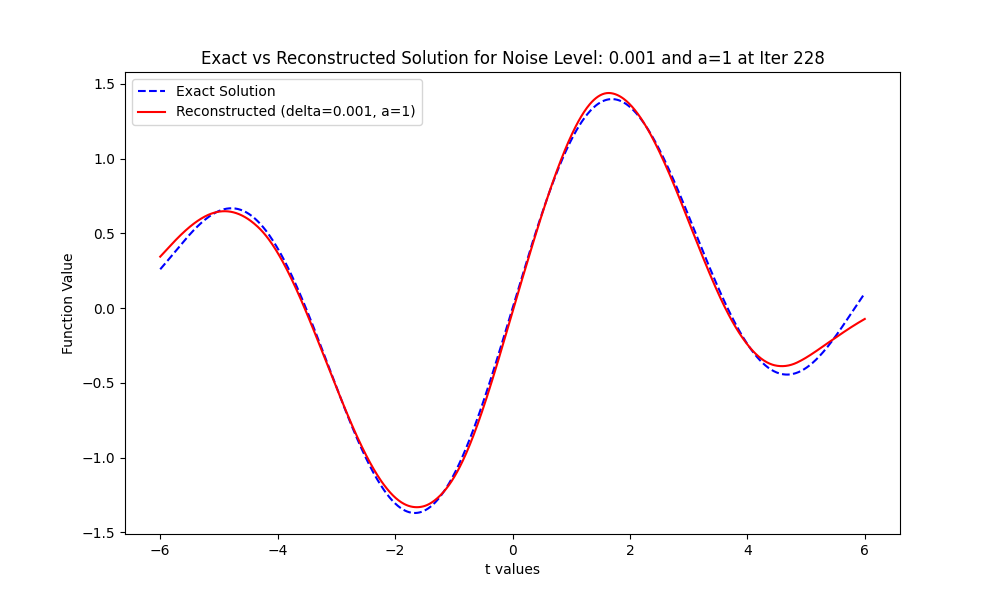}} 
     \subfigure[]{\includegraphics[width=0.48\textwidth]{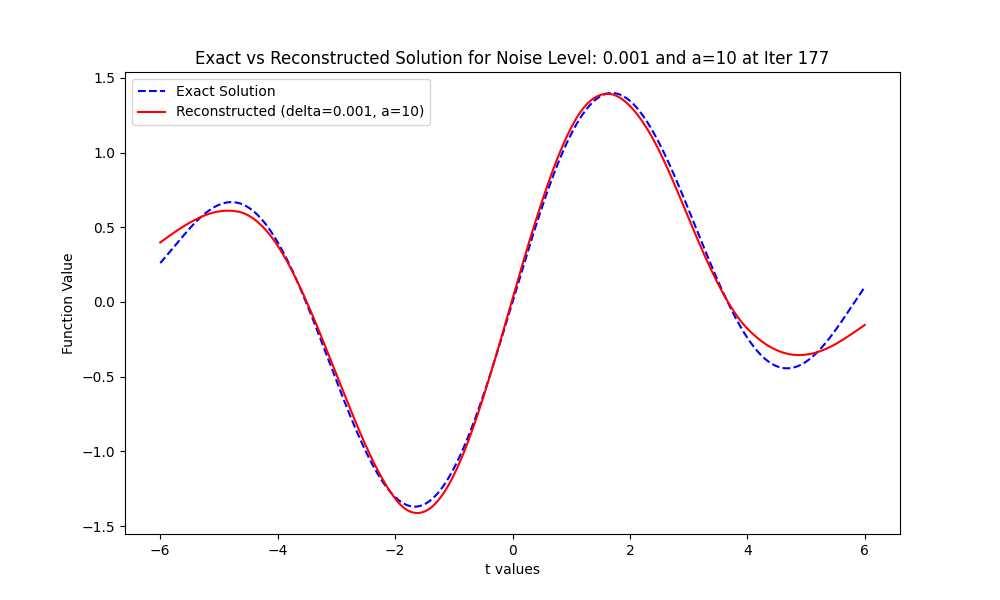}}
 \subfigure[]{\includegraphics[width=0.48\textwidth]{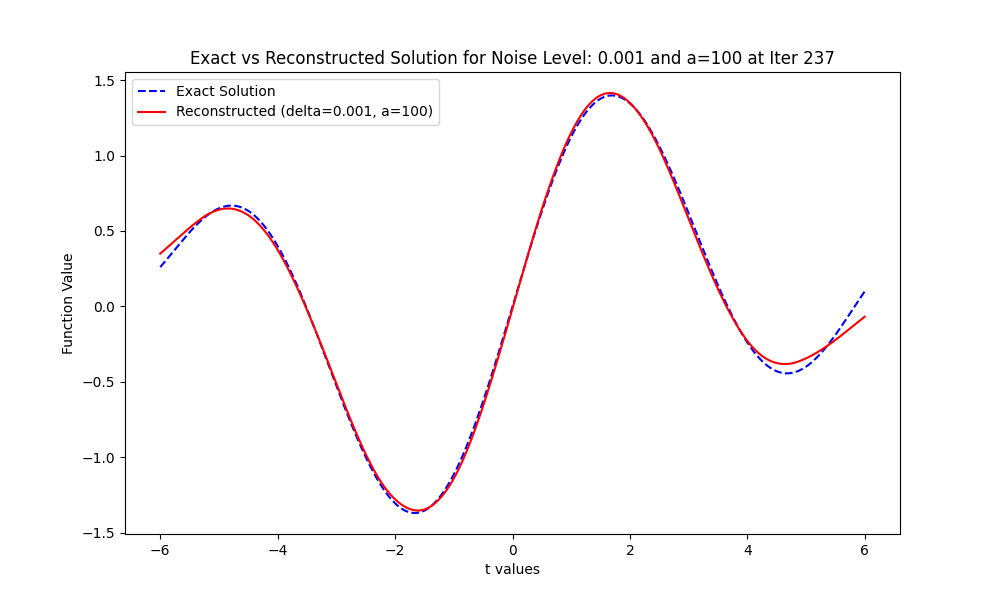}}
  \subfigure[]{\includegraphics[width=0.48\textwidth]{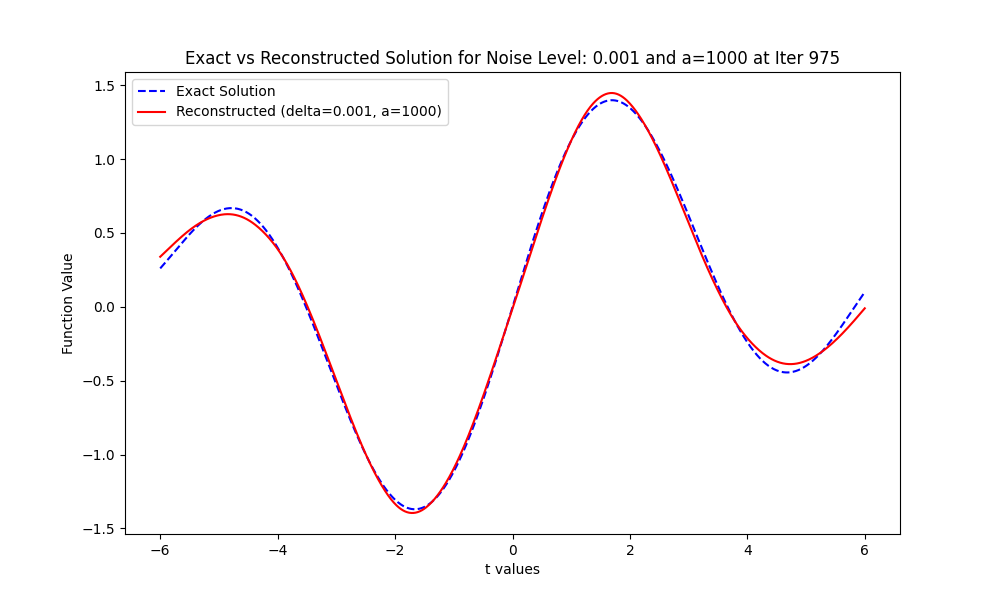}}
   \caption{Effect of the parameter $a$ on reconstruction quality for $\delta_{\text{rel}} = 10^{-3}$: (i) $a = 1$, (ii) $a = 10$, (iii) $a = 100$, and (iv) $a = 1000$, showing side-by-side comparisons of the exact and reconstructed solutions.}
    \label{M=0.001}
\end{figure}

Tables \ref{IRSGD with different a values} provide detailed numerical results (stopping index $k_*$ and the relative errors $E^\delta_k := \frac{\| u^\delta_k - u^\dagger \|_{L^2(\Omega)}}{\| u^\dagger \|_{L^2(\Omega)}}$) for Rule \ref{Hanke Rule} at various noise levels.  
\begin{table}[ht]
\centering
\begin{tabular}{ccccccccccc}
\toprule
 $\delta_{\text{rel}}$ & &  & $a$ in Rule \ref{Hanke Rule} & & \\
\midrule
 & \multicolumn{2}{c}{$1\text{e}1$} & \multicolumn{2}{c}{$1\text{e}2$} & \multicolumn{2}{c}{$1\text{e}3$} & \multicolumn{2}{c}{$1\text{e}4$} \\
\cmidrule(r){2-3} \cmidrule(r){4-5} \cmidrule(r){6-7} \cmidrule(r){8-9} 
 & $k_*$ & $E_{k_*}^\delta$ & $k_*$ & $E_{k_*}^\delta$ & $k_*$ & $E_{k_*}^\delta$ & $k_*$ & $E_{k_*}^\delta$  \\
\midrule
$1\text{e}^{-1}$ & 18 & $3.550\text{e}^{-2}$ & 32 & $1.565\text{e}^{-2}$ & 49 & $1.441\text{e}^{-2}$ & 249& $4.213\text{e}^{-3}$  \\

$1\text{e}^{-2}$ & 133 & $4.817\text{e}^{-3}$ & 80 & $8.626\text{e}^{-3}$ & 506 & $1.300\text{e}^{-3}$ & 565 & $1.664\text{e}^{-3}$   \\

$1\text{e}^{-3}$ & 172 & $2.293\text{e}^{-3}$ & 260 & $4.982\text{e}^{-3}$ & 2044 & $5.845\text{e}^{-4}$ & 3404 & $4.982\text{e}^{-4}$ \\
$1\text{e}^{-4}$ & 3304 & $5.341\text{e}^{-4}$ & 3167 & $5.134\text{e}^{-4}$ & 867 & $6.111\text{e}^{-4}$ & 4874 & $3.185\text{e}^{-4}$   \\
\bottomrule
\end{tabular}
\caption{Performance of the IRSGD method for various values of the parameter $a$ in Rule~\ref{Hanke Rule} and different relative noise levels $\delta_{\text{rel}}$. For each combination, the table reports the stopping index $k_*$ and the corresponding reconstruction error $E_{k_*}^\delta$.}
\label{IRSGD with different a values}
\end{table}
\subsubsection{Comparison between SGD and  IRSGD}  For this comparison, we set the constant \( a \) in the rule (referred to as Rule \ref{Hanke Rule}) to \( a = 100 \). 
The numerical findings  are reported in Table \ref{comparison-table}.
\begin{table}[ht]
\centering
\begin{tabular}{lcccc}
\toprule
\textbf{Methods} & \textbf{$\delta_{\text{rel}}$} &  \textbf{$k_*$} & \textbf{$E_{k_*}^{\delta}$} & \textbf{$\Psi(k_*, y^\delta)$} \\
\midrule
IRSGD & $1e^{-1}$ & 68 & $1.58e^{-2}$ & 139237.06 \\
SGD & $1e^{-1}$ & 66 & $3.34e^{-2}$  & 153135.32 \\
\midrule
IRSGD &  $1e^{-2}$ & 180 & $2.49e^{-3}$ & 24514.94 \\
SGD & $1e^{-2}$ & 192 & $1.44e^{-3}$ &27692.34\\
\midrule
IRSGD & $1e^{-3}$ &  258 & $5.53e^{-3}$ & 20785.89\\
SGD &  $1e^{-3}$ & 772 & $3.40e^{-4}$ & 8772.18\\
\midrule
IRSGD & $1e^{-4}$ & 3492 & $7.93e^{-4}$ & 21809.47 \\
SGD & $1e^{-4}$ & 4122 & $4.59e^{-4}$ & 12432.96 \\
\midrule
IRSGD & $1e^{-5}$ & 3468 & $6.24e^{-4}$ & 17168.44 \\
SGD & $1e^{-5}$ & 3529 & $7.71e^{-4}$ & 17952.29 \\
\bottomrule
\end{tabular}
\caption{Quantitative comparison between IRSGD and SGD methods using Rule~\ref{Hanke Rule} for various relative noise levels $\delta_{\text{rel}}$. The table reports the stopping index $k_*$, reconstruction error $E_{k_*}^\delta$, and $\Psi(k_*, y^\delta)$ for each case.}
\label{comparison-table}
\end{table}

We observe that, as noise level decreases (moving down the table), both methods generally require more iterations (\( k_* \)) to achieve an acceptable solution. Lower noise levels yield lower relative errors \( E_{k_*}^{\delta} \), which is expected as there is less perturbation in the data, allowing both methods to approximate the true solution more accurately.
For different noise levels, IRSGD tends to converge in fewer iterations than SGD, indicating that the additional damping term in IRSGD can speed up convergence. However, for very low noise (\( \delta_{\text{rel}} = 1 \times 10^{-5} \)), the iteration counts for IRSGD and SGD are similar, suggesting that both methods stabilize in a similar time frame.

\vspace{-0.2cm}

\subsection{Nonlinear case}
This subsection presents the mathematical model that illustrates the Schlieren tomography and validates the proposed methods. The Schlieren optical system produces a light intensity pattern that is proportional to the square of the line integral of pressure along the light path as it passes through the tank (see \cite{Schileren2007} for more details).

For each \( i \) between \( 0 \) and \( P - 1 \), let \( \sigma_i \in S^1 \) be a set of recording directions such that \( \zeta_i = \zeta(\theta_i) = (\cos \theta_i, \sin \theta_i) \). The objective is to recover a function \( u \) supported within the bounded domain \( D := [-1, 1] \times [-1, 1] \subset \mathbb{R}^2 \) from the corresponding system of equations
\[
F_i(u) = y_i^\delta, \quad \text{for } i = 0, 1,  \ldots, P - 1,
\]
where the Schlieren transform \( F_i = R_i^2 \), with the operator \( R_i \) denoting the Radon transform along the direction \( \zeta_i \), satisfying \( R_i : C_0^\infty(D) \to C_0^\infty(I) \):
\[
u \mapsto R_i(u) := \left( s \mapsto \int u(s \sigma_i + r \sigma_i^\perp) \, dr \right).
\]
From \cite{Schileren2007}, we know that each operator \( F_i : H_0^1(D) \to L^2(I) \) with \( I = [-1, 1] \) is continuous and Fr\'echet differentiable with
\[
F_i'(u)h = 2R_i(u)R_i(h), 
\]
for all \(h \in H_0^1(D).\) Moreover, the adjoint \( F_i'(u)^* : L^2(I) \to H_1^0(D) \) is given by
\[
F_i'(u)^* g = (I - \Delta)^{-1} \left( 2R_i^*(R_i(u) g) \right), \quad \forall g \in L^2(I),
\]

where \( R_i^* : L^2(I) \to L^2(D) \) denotes the adjoint of  \( R_i \), defined by \( (R_i^* g)(u) = g(u \cdot \sigma_i),\) $I$ refers to the identity operator and $\Delta$  represents the Laplace operator acting on $H_{0}^{1}(D)$. Evaluating the adjoint operator entails the solution of an elliptic boundary value problem, for which a finite difference discretization is employed within a multigrid framework to obtain an efficient numerical approximation.

In this numerical experiment, the parameters were set as follows: 
$\delta_{\text{rel}} = 10^{-2}$, $k_{\text{max}} = 1000$, $\omega_k = \omega = 5 \times 10^{-3}$, and $\lambda_k = k^{-3}$. 
The true image and the corresponding initial guess are presented in Figure~\ref{initial and true figures}. 
The proposed method was applied to reconstruct the image, with the stopping criterion determined by Rule~\ref{Hanke Rule} for various values of the parameter $a$, as illustrated in Figure~\ref{Reconstructed}.  

To assess the reconstruction performance, we monitored both the relative error and the evolution of $\Psi(k, y^\delta)$ for selected values of $a$, specifically $a = 10^2, a = 10^{3}$ and $a = 10^{4}$. 
The corresponding results are displayed in Figures~\ref{RE=1000}, \ref{RE= 10000} and \ref{RE= 100000}. 
Furthermore, the quality of the reconstruction was evaluated using the peak signal-to-noise ratio (PSNR) and the structural similarity index (SSIM) \cite{Wang2004}, with the obtained values reported in the Table~\ref{nonlinear-table}, alongside the stopping index and relative error for each case.

\begin{figure}[htbp]
    \centering
    \subfigure[]{\includegraphics[width=0.48\textwidth]{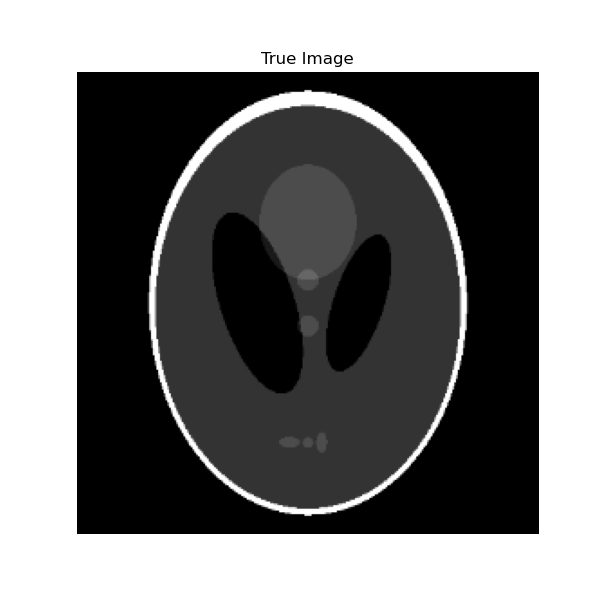}}
    \subfigure[]{\includegraphics[width=0.48\textwidth]{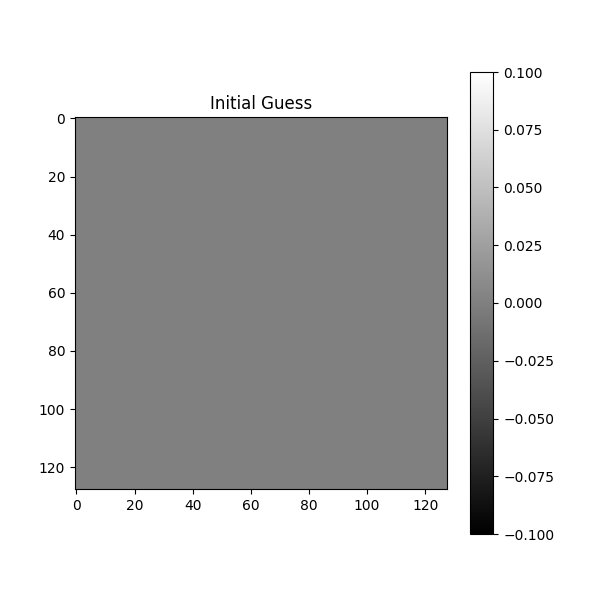}}
\caption{(i) True image of the Shepp–Logan phantom; (ii) corresponding initial guess for the reconstruction.}
\label{initial and true figures}
\end{figure}
\begin{figure}
    \centering
    \subfigure[]{\includegraphics[width=0.32\textwidth]{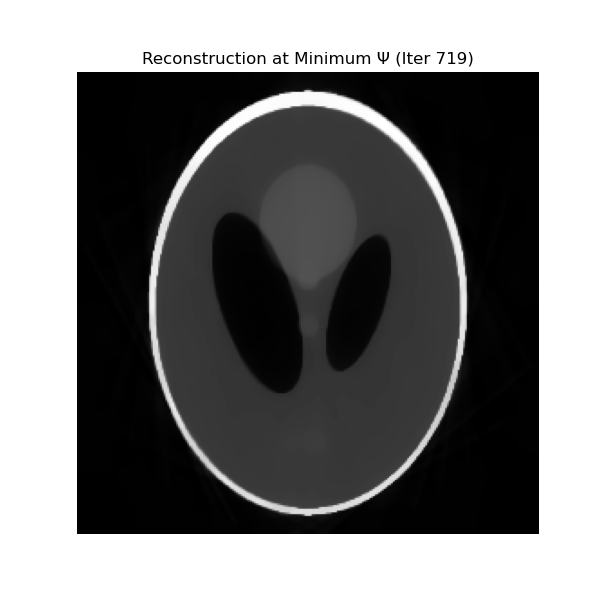}}
    \subfigure[]{\includegraphics[width=0.32\textwidth]{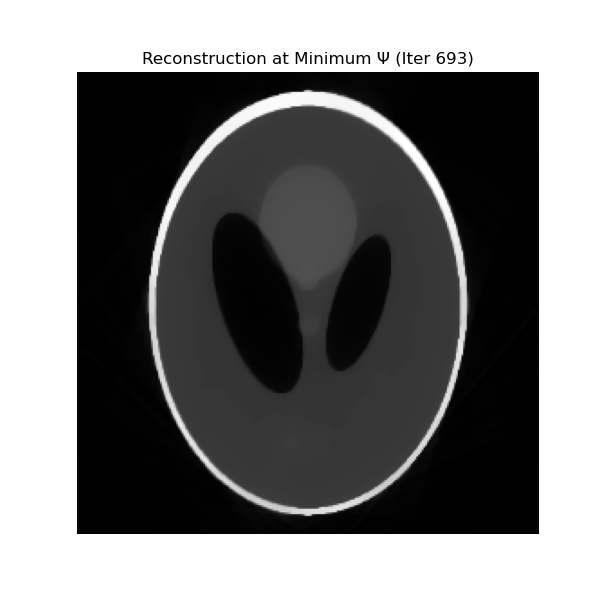}}
    \subfigure[]{\includegraphics[width=0.32\textwidth]{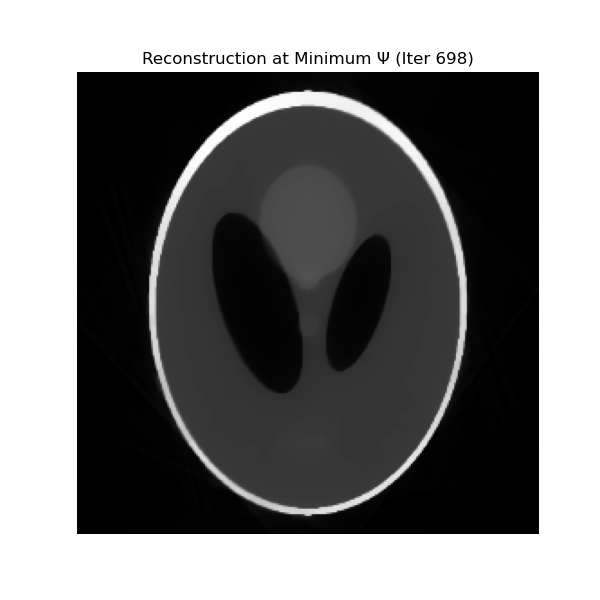}}  
    \caption{Reconstructed solutions for $\delta_{\text{rel}} = 10^{-2}$ using different values of the parameter $a$ in Rule~\ref{Hanke Rule}: (i) $a = 100$; (ii) $a = 1000$; (iii) $a = 10000$.}

    \label{Reconstructed}
\end{figure}

\begin{figure}
    \centering
    \subfigure[]{\includegraphics[width=0.48\textwidth]{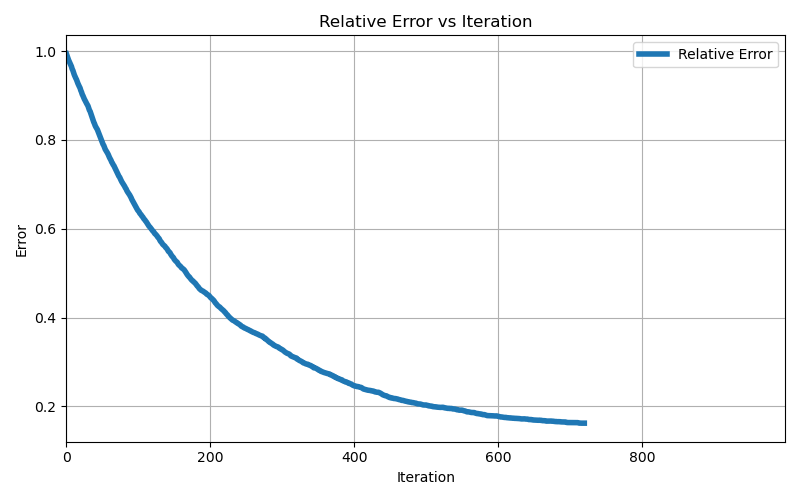}}
    \subfigure[]{\includegraphics[width=0.48\textwidth]{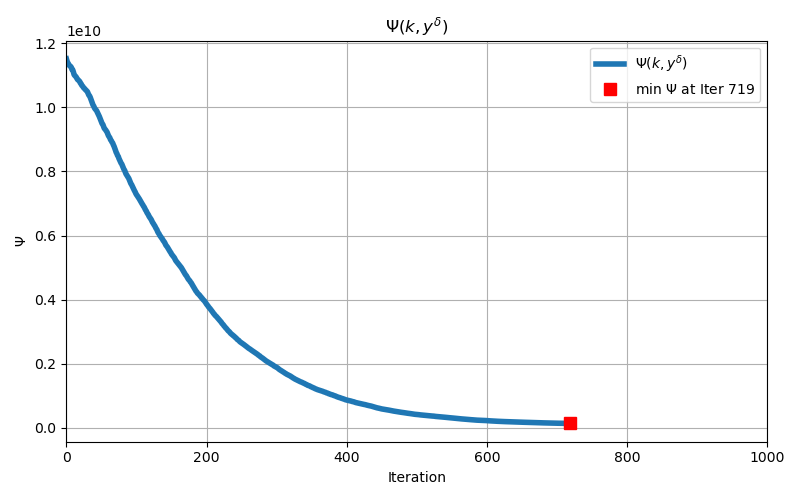}}  
    \caption{Results for $\delta_{\text{rel}} = 10^{-2}$ with $a = 100$: (i) relative error in the reconstructed solution; (ii) progression of $\Psi(k, y^\delta)$ over the iterations.}
    \label{RE=1000}
\end{figure}

\begin{figure}
    \centering
    \subfigure[]{\includegraphics[width=0.48\textwidth]{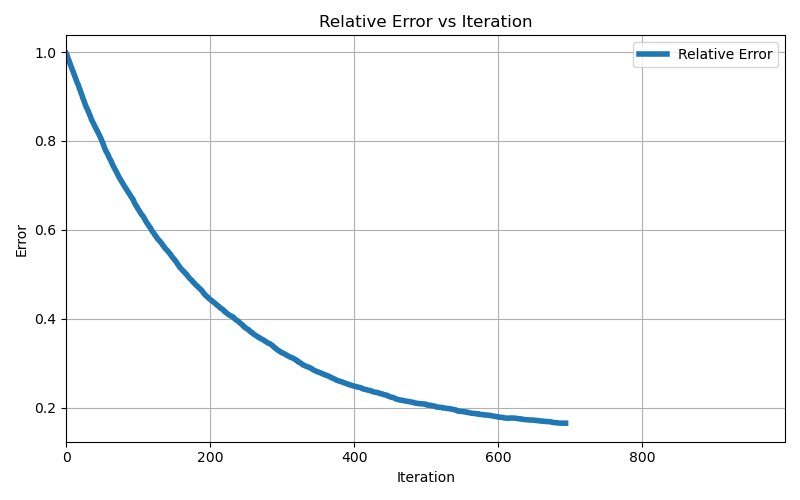}}
    \subfigure[]{\includegraphics[width=0.48\textwidth]{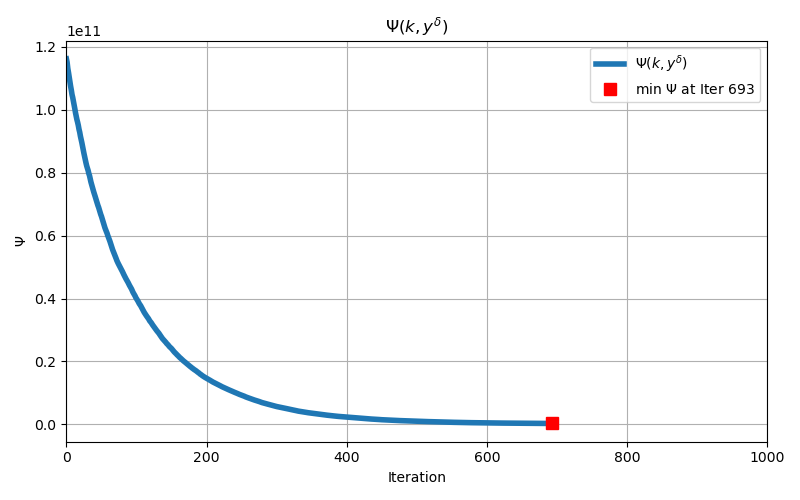}} 
        \caption{Results for $\delta_{\text{rel}} = 10^{-2}$ with $a = 1000$: (i) relative error in the reconstructed solution; (ii) progression of $\Psi(k, y^\delta)$ over the iterations.}
    \label{RE= 10000}
\end{figure}
\begin{figure}
    \centering
    \subfigure[]{\includegraphics[width=0.48\textwidth]{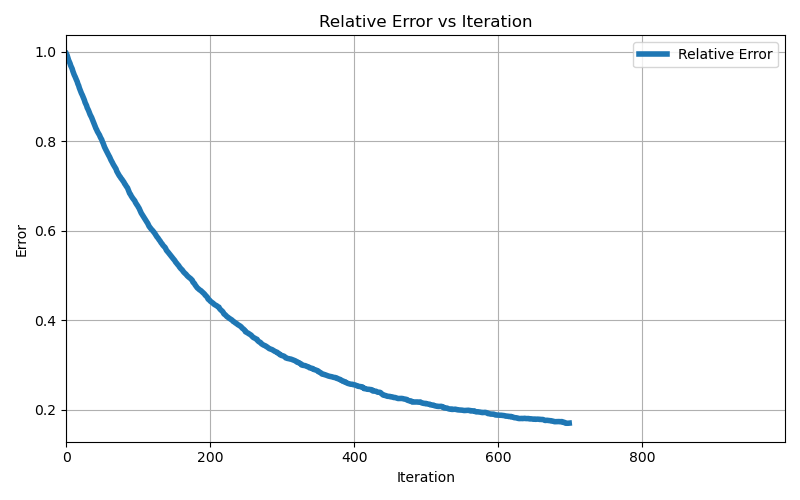}}
    \subfigure[]{\includegraphics[width=0.48\textwidth]{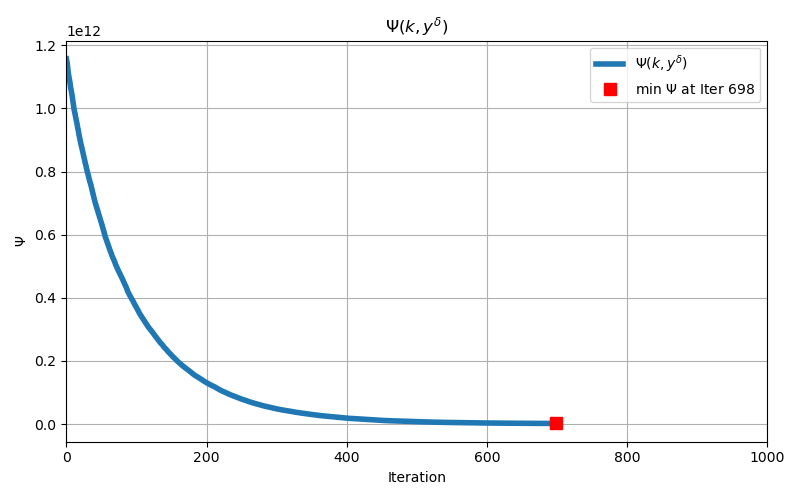}}  
       \caption{Results for $\delta_{\text{rel}} = 10^{-2}$ with $a = 10000$: (i) relative error in the reconstructed solution; (ii) progression of $\Psi(k, y^\delta)$ over the iterations.}
    \label{RE= 100000}
\end{figure}

\begin{table}[ht]
\centering
\begin{tabular}{lccccc}
\toprule
\multicolumn{6}{c}{$\delta_{\text{real}} = 0.01$} \\
\toprule
 \textbf{$a$} in Rule~\ref{Hanke Rule} &  \textbf{$k_*$} & \textbf{$E_{k_*}^{\delta}$} & \textbf{$\Psi(k_*, y^\delta)$} & PSNR & SSIM \\
\midrule
$100$ & $719$ & $1.62e^{-1}$ & $1.29e^{+8}$ & $28.11$ & $0.8251$ \\

$1000$ & $693$ & $1.65e^{-1}$ & $2.97e^{+8}$ & $27.95$ & $0.8215$  \\

 $10000$ &  $698$ & $1.70e^{-1}$ &$2.04e^{+9}$& $27.70$ & $0.8051$ \\
\bottomrule
\end{tabular}
\caption{Performance comparison for different values of the parameter $a$ in Rule~\ref{Hanke Rule} for $\delta_{\text{real}} = 0.01$, showing the stopping index $k_*$, reconstruction error $E_{k_*}^\delta$, $\Psi(k_*, y^\delta)$, and image quality metrics (PSNR and SSIM).}

\label{nonlinear-table}
\end{table}

\section{Conclusion}\label{Conc}
In this work, we have introduced an Iteratively Regularized Stochastic Gradient Descent (IRSGD) method for solving systems of nonlinear ill-posed inverse problems, incorporating the Hanke–Raus heuristic rule as a fully data-driven stopping criterion. A key advantage of this approach is that it operates without any prior knowledge of the noise level, thereby enhancing its applicability in practical scenarios. Our convergence analysis relies on the tangential cone condition, noise assumption and assumes Lipschitz continuity of the underlying operators. Moreover, we have established convergence results in the mean-squared norm and validated the method through comprehensive numerical experiments on both linear and nonlinear ill-posed problems.
The results demonstrate that the proposed IRSGD method, augmented with a damping factor, offers improved efficiency and faster convergence compared to standard stochastic gradient approaches.

Several promising directions for future research emerge from this study. First, it would be valuable to relax Assumption~\ref{new assumption} by introducing milder conditions, following the ideas in \cite{HumberS, Jahn2023}. Second, in line with \cite{Harshitdatadriven, clason2019bouligand, clason2019bouligand2, mittalbajpaiIR2}, the framework could be extended to handle non-smooth problems, thereby broadening its applicability. Finally, the development of adaptive step-size strategies could provide more practical guidelines for real-world implementation. Additionally, the heuristic stopping rule (based on a modified discrepancy principle) could be further refined to enable a deeper stochastic analysis of regularization methods such as those in \cite{clason2019bouligand2, kaltenbacher2008iterative, mittalbajpaiIR, mittalbajpaiIR2, Gaurav6}.

 \section*{Acknowledgments} Harshit Bajpai acknowledges the Anusandhan National Research Foundation (ANRF), India, for supporting his Ph.D. fellowship under grant CRG/2022/005491. Ankik Kumar Giri gratefully acknowledges ANRF for funding his research through the project CRG/2022/005491.

 \subsection*{Data Availability} Not applicable for this article.
 
 \vspace{-0.2cm}

\end{document}